\documentclass[12pt]{amsart}
\usepackage[utf8]{inputenc}
\usepackage[english]{babel}
\usepackage{amsmath, amssymb, amsfonts, enumerate, amsthm, mathtools, leftindex, xcolor}
\usepackage{mathrsfs}
\usepackage{hyperref}
\usepackage{newtxtext}
\usepackage[margin=2cm]{geometry}
\usepackage{orcidlink}


\linespread{1.25}

\newcommand{\assign}{:=}
\newcommand{\backassign}{=:}
\newcommand{\lto}{\longrightarrow}

\newcommand{\tmop}[1]{\ensuremath{\operatorname{#1}}}

\newenvironment{enumeratealpha}{\begin{enumerate}[a{\textup{)}}] }{\end{enumerate}}

\newtheorem{theorem}{Theorem}[section]
\newtheorem{lemma}[theorem]{Lemma}
\newtheorem{proposition}[theorem]{Proposition}
\newtheorem{corollary}[theorem]{Corollary}
\theoremstyle{definition}
\newtheorem{remark}[theorem]{Remark}
\newtheorem{example}[theorem]{Example}
\theoremstyle{definition}
\newtheorem{definition}[theorem]{Definition}

\title{Multipoint Schwarz--Pick Lemma for the quaternionic case}
\author [Cinzia Bisi]{Cinzia Bisi \orcidlink{0000-0002-4973-1053}}
\address{C. Bisi: Dipartimento di Matematica e Informatica, Universit\`a degli Studi di Ferrara, Via N. Machiavelli 30, 44121, Ferrara, Italia.
\newline \emph{ORCID link}: \orcidlinkf{0000-0002-4973-1053}} 
\email{\href{mailto:bsicnz@unife.it}{bsicnz@unife.it}}

\author[Davide Cordella]{Davide Cordella \orcidlink{0009-0001-1163-0046}}
\address{D. Cordella: Dipartimento di Ingegneria Industriale e Scienze Matematiche, Universit\`a Politecnica delle Marche, Via Brecce Bianche 12, 60131, Ancona, Italia. \newline \emph{ORCID link}: \orcidlinkf{0009-0001-1163-0046}} 
\email{\href{mailto:d.cordella@staff.univpm.it}{d.cordella@staff.univpm.it}}
\thanks{The two authors were partially supported by GNSAGA of INdAM and by PRIN \emph{Varietà complesse: geometria, topologia e analisi armonica}}

\date{}

\subjclass[2020]{30G35}
\keywords{Schwarz--Pick Lemma, slice regularity, M\"obius transformations, hyperbolic difference quotient, Nevanlinna--Pick interpolation}

\begin{document}

\setlength{\parindent}{0pt}

\begin{abstract}
Following ideas by Beardon, Minda and Baribeau, Rivard, Wegert in the context of the complex Schwarz--Pick Lemma, we use iterated hyperbolic difference quotients to prove a quaternionic multipoint Schwarz--Pick Lemma, in the context of the theory of slice regular functions. As applications, we obtain quaternionic Dieudonn\'e and Goluzin estimates. Finally, an algorithm for the construction of (Nevanlinna--Pick) interpolating slice regular functions with real nodes is provided as a byproduct of the quaternionic multipoint Schwarz--Pick Lemma.
\end{abstract}
\maketitle
\section*{Introduction}

In complex analysis, holomorphicity plays an important role in the study of the intrinsic geometry of the unit disk $\mathbb D=\{z\in\mathbb C: |z|<1\}$, thanks to the Schwarz--Pick Lemma,  which asserts the following:

\begin{lemma}[Schwarz--Pick Lemma]
Let $f:\mathbb D\lto\mathbb D$ be a holomorphic function. Let $z_0\in\mathbb D$. Then for any $z\in\mathbb D$ it is
\begin{equation}\label{eq:spl_complex}
	\left|\frac{f(z)-f(z_0)}{1-\overline {f(z_0)}f(z)}\right|\leq \left|\frac{z-z_0}{1-\overline{z_0}z}\right|;
\end{equation}
moreover
\begin{equation*}
\frac{|f'(z_0)|}{1-|f(z_0)|^2}\leq\frac{1}{1-|z_0|^2}.
\end{equation*}
Inequalities are strict for $z\neq z_0$, unless $f$ is an automorphism of the disk.
\end{lemma}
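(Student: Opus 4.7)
The plan is to reduce the statement to the classical Schwarz Lemma via pre- and post-composition with disk automorphisms. I would begin by recording the two ingredients: (i) the Schwarz Lemma, which says that any holomorphic $g\colon\mathbb D\to\mathbb D$ with $g(0)=0$ satisfies $|g(w)|\le|w|$ and $|g'(0)|\le1$, with equality (for some $w\ne0$, or in the derivative bound) forcing $g$ to be a rotation; and (ii) the explicit Möbius automorphism
\[
\varphi_a(z)=\frac{z-a}{1-\bar a z},\qquad a\in\mathbb D,
\]
which sends $a\mapsto 0$, maps $\mathbb D$ biholomorphically onto itself, has inverse $\varphi_{-a}$, and satisfies $\varphi_a'(a)=1/(1-|a|^2)$.

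Setting $w_0\assign f(z_0)$, I would form the auxiliary function
\[
g\assign \varphi_{w_0}\circ f\circ\varphi_{z_0}^{-1}\colon\mathbb D\lto\mathbb D.
\]
By construction $g$ is holomorphic and $g(0)=\varphi_{w_0}(f(z_0))=0$, so the classical Schwarz Lemma applies. Writing $w=\varphi_{z_0}(z)$ and applying $|g(w)|\le|w|$ gives
\[
\left|\varphi_{w_0}(f(z))\right|\le\left|\varphi_{z_0}(z)\right|,
\]
which is precisely \eqref{eq:spl_complex} after expanding the $\varphi$'s.

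For the derivative inequality, I would differentiate $g=\varphi_{w_0}\circ f\circ\varphi_{z_0}^{-1}$ at $0$ using the chain rule, obtaining
\[
g'(0)=\varphi_{w_0}'(w_0)\,f'(z_0)\,(\varphi_{z_0}^{-1})'(0)=\frac{1-|z_0|^2}{1-|w_0|^2}\,f'(z_0).
\]
The bound $|g'(0)|\le1$ then rearranges directly into the claimed estimate on $|f'(z_0)|/(1-|f(z_0)|^2)$. Finally, the strictness/equality discussion is inherited from the corresponding clause in the Schwarz Lemma: equality in either inequality forces $g$ to be a rotation of $\mathbb D$, hence $f=\varphi_{-w_0}\circ(\text{rotation})\circ\varphi_{z_0}$ is an automorphism of $\mathbb D$, and conversely any such $f$ saturates both inequalities.

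The argument is essentially a bookkeeping exercise once the automorphism viewpoint is adopted; the only place that requires a little care is verifying the identities $\varphi_a'(a)=1/(1-|a|^2)$ and $(\varphi_a^{-1})'(0)=1-|a|^2$ so that the derivative normalization comes out correctly. This is the template I expect to generalize in the quaternionic slice-regular setting, with Möbius transformations replaced by their slice regular analogues and the classical Schwarz Lemma replaced by its quaternionic version.
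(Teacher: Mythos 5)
Your proof is correct and is the standard reduction to the Schwarz Lemma by conjugating with the disk automorphisms $\varphi_{z_0}$ and $\varphi_{f(z_0)}$; the paper states this classical lemma without proof (citing Pick), and your argument is exactly the canonical one it implicitly relies on. The derivative normalizations $\varphi_a'(a)=1/(1-|a|^2)$ and $(\varphi_a^{-1})'(0)=1-|a|^2$ check out, and the equality discussion is correctly inherited from the rigidity clause of the Schwarz Lemma.
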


This lemma was formulated at the beginning of the 20\textsuperscript{th} century by Pick \cite{pick1} and it has also a more geometric interpretation: a holomorphic self-map of $\mathbb D$, if it is not an automorphism, is a contraction with respect to the Poincaré metric on $\mathbb D$, $ds^2=(1-|z^2|)^{-2} dz d\overline z$ and in the automorphism case it is an isometry. This result turns out to be a special case of Nevanlinna--Pick interpolation Theorem, which gives a necessary and sufficient condition for the existence of a holomorphic function $f:\mathbb D \lto \mathbb D$ such that $f(z_m)=w_m$ for all $m=1,\ldots,n$, given $n$ distinct points $z_1,\ldots,z_n\in\mathbb D$  and $w_1,\ldots w_n\in\mathbb D$. The condition is that the {Pick matrix}
\begin{equation*}
	P=\left( \frac{1-w_a\overline{w_b}}{1-z_a\overline{z_b}} \right)_{a,b=1,\ldots,n}
\end{equation*}
is positive semidefinite. It is straightforward to see that when $n=2$ this condition reduces to inequality \eqref{eq:spl_complex}.

The Schwarz--Pick Lemma has been extended in various ways, in many years and by many authors. Among the others, Beardon and Minda \cite{BeardonMinda04} 
have introduced the following hyperbolic difference quotient

\[ f^\star_{z_0}(z)\assign \begin{cases} \frac{[f(z),f(z_0)]}{[z,z_0]} &z\neq z_0\\ f'(z_0)\frac{1-|z_0|^2}{1-|f(z_0)|^2} & z=z_0, \end{cases}\qquad \text{where }[z,w]=\frac{z-w}{1-\overline{w}z} \]

and with this quantity they have got a `three points' version of the Schwarz--Pick Lemma

\begin{theorem}[{\cite[Thm. 3.1]{BeardonMinda04}}]
Let $f:\mathbb D\lto\mathbb D$ be holomorphic but not an automorphism. Let $z_0,z_1,z_2\in\mathbb D$. Then
\begin{equation*}
	\left|\frac{f^\star_{z_0}(z_1)-f^\star_{z_0}(z_2)}{1-\overline{f^\star_{z_0}(z_2)}f^\star_{z_0}(z_1)}\right|\leq \left|\frac{z_1-z_2}{1-\overline{z_2}z_1}\right|.
\end{equation*}
The inequality is strict for $z_1\neq z_2$ unless $f$ is a complex Blaschke product of degree two.
\end{theorem}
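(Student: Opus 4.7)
The plan is to observe that the hyperbolic difference quotient $f^\star_{z_0}$ is itself a holomorphic self-map of $\mathbb D$, and then to apply the classical two-point Schwarz--Pick Lemma to $f^\star_{z_0}$ at the pair $z_1,z_2$; in this sense the three-point estimate is just the two-point estimate one level up, with $f$ replaced by its hyperbolic difference quotient.

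First I would verify that $f^\star_{z_0}$ is holomorphic on all of $\mathbb D$. Away from $z_0$ this is immediate since $[z,z_0]$ has no zeros there. At $z=z_0$ both $[f(z),f(z_0)]$ and $[z,z_0]$ vanish to first order; a direct limit computation gives
\[
\lim_{z\to z_0}\frac{[f(z),f(z_0)]}{[z,z_0]}=\lim_{z\to z_0}\frac{f(z)-f(z_0)}{z-z_0}\cdot\frac{1-\overline{z_0}z}{1-\overline{f(z_0)}f(z)}=\frac{f'(z_0)(1-|z_0|^2)}{1-|f(z_0)|^2},
\]
matching the prescribed value at $z_0$, so the singularity is removable. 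Next I would check that $f^\star_{z_0}$ takes values in $\mathbb D$: the two-point Schwarz--Pick lemma applied to $f$ at the pair $z,z_0$ rearranges precisely to $|f^\star_{z_0}(z)|\le 1$ for $z\ne z_0$, while the derivative form of the same lemma handles $z=z_0$. Since $f$ is not an automorphism the uniqueness statement in the classical lemma forbids equality at any single point, so $f^\star_{z_0}(\mathbb D)\subset\mathbb D$.

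With these preliminaries in place, applying the two-point Schwarz--Pick Lemma to the holomorphic self-map $f^\star_{z_0}:\mathbb D\to\mathbb D$ at the points $z_1,z_2$ yields the desired inequality. For the sharpness claim, if equality held at some $z_1\ne z_2$ then by the classical equality discussion $f^\star_{z_0}$ would itself be an automorphism of $\mathbb D$, say $f^\star_{z_0}(z)=\alpha[z,a]$ with $|\alpha|=1$ and $a\in\mathbb D$; substituting back into $[f(z),f(z_0)]=f^\star_{z_0}(z)\cdot [z,z_0]$ and inverting the outer Möbius $w\mapsto[w,f(z_0)]$ exhibits $f$ as a Blaschke product of degree two, contrary to hypothesis.

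The main obstacle is conceptual rather than computational: recognizing the correct lift, namely that $f^\star_{z_0}$ is again a Schwarz--Pick contraction. Once that is seen, the only genuine verifications are the removable singularity at $z_0$ and the strict bound $|f^\star_{z_0}|<1$ under the non-automorphism hypothesis, both of which are routine consequences of the classical lemma.
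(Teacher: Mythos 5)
Your proposal is correct and follows essentially the same route as the paper (and as Beardon--Minda): establish that $f^\star_{z_0}$ is a holomorphic self-map of $\mathbb D$ via the two-point Schwarz--Pick Lemma and a removable-singularity check, apply the two-point lemma again to $f^\star_{z_0}$ at $z_1,z_2$, and identify the equality case with $f^\star_{z_0}$ being an automorphism, hence $f$ a degree-two Blaschke product. This is precisely the strategy the paper replicates in the quaternionic setting (Theorem~\ref{thm:3pspl} together with the proposition characterizing when $f^\star_p$ is a M\"obius transformation).
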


From this result they have also obtained easy proofs of classical estimates for derivatives of holomorphic self-maps of the unit disk, assuming that the origin is a fixed point.

Baribeau, Rivard and Wegert \cite{BaribeauRivardWegert} have considered iterated hyperbolic difference quotients $f^{\{n\}}_{z_1,\ldots,z_n}=(f^{\{n-1\}}_{z_1,\ldots,z_{n-1}})^\star_{z_n}=(((f^\star_{z_1})^\star_{z_2})^\star_{z_3}\ldots)^\ast_{z_n}$, getting an analogous theorem with more than three points. Moreover, they used these iterated differences to give simpler conditions for the $n$ points Nevanlinna--Pick interpolation problem, and also a constructive algorithm for the solutions which is based on the works of Schur \cite{Schur1917} and Nevanlinna \cite{Nevanlinna1919}. Iterated hyperbolic difference quotients have been also considered in \cite{ChoKimSugawa12} in order to obtain estimates for higher order derivatives and recently in \cite{abate_julia} to generalize Julia's Lemma.

Over the skew-field of quaternions $\mathbb H$, which contains $\mathbb C$, the notion of slice regularity takes the place of holomorphicity. Slice regular functions have been introduced by Gentili and Struppa \cite{advances}, developing a regularity concept that was originally formulated by Cullen \cite{cullen}. It is based on the fact that it can be given a `book structure' to the skew-field of quaternions $\mathbb H=\bigcup_{I\in\mathbb S} \mathbb C_I$,
where $\mathbb S$ is the sphere of \emph{imaginary units} $\{q\in\mathbb H:q^2=-1\}$ and $\mathbb C_I$ are the complex subspaces $\{x+y I\in\mathbb H:x,y\in\mathbb R\}$, which are known as \emph{complex slices} and which can be identified with complex planes having the real line $\mathbb R$ in common. A function $f:\Omega\lto\mathbb H$ defined on a domain in $\mathbb H$ is said to be slice regular if and only if its restriction to any slice $\mathbb C_I$ is holomorphic, i.e., the differential operator $\overline\partial_I=1/2(\partial_x+I\partial_y)$ vanishes on $f\mid_{\Omega\cap\mathbb C_I}$. If so, one can define a derivative $\partial_C f$ (the \emph{regular}, or \emph{Cullen}, \emph{derivative}) in this way: $\partial_C f(q)=\partial_I (f\mid_{\Omega\cap\mathbb C_I})(q)$ for any $q\in\Omega\cap\mathbb C_I$, where $\partial_I$ is the operator $1/2(\partial_x-I\partial_y)$. It is important to observe that the common pointwise product of functions does not preserve slice regularity; however, one could define a different product on spaces of slice regular functions, the $\ast$-product $f\ast g$, which allows to define on them the structure of a (noncommutative) algebra. Using this operation together with the $\ast$-inversion $f\mapsto f^{-\ast}$, one can define slice regular M\"obius transformations on the quaternionic unit ball $\mathbb B=\{q:\mathbb H: |q|<1\}$, that is to say the maps of the form
\[
(1-q \overline{p})^{-\ast}\ast (q-p)u, \quad p\in\mathbb B, u\in\partial \mathbb B.
\]
These above are the unique slice regular bijective functions mapping $\mathbb B$ onto itself. When $u=1$, we denote the map above by $\mathcal M_p$. Analogously, one can give a definition of slice regular finite Blaschke products on the unit ball $\mathbb B$, which can be characterized in a similar way as in the complex framework.

All the facts we need from the theory of slice regularity are reviewed in Section \ref{sec:prel}, with a focus on slice regular M\"obius transformations (following \cite{volumeindam,moebius}) and regular Blaschke products on $\mathbb B$. 
The main difference between the
classical complex setting and the quaternionic frame is essentially due to the contrasts between the theory of slice regularity with respect to the classical holomorphicity, which are stressed in Section \ref{sec:prel}. We recall, for instance, the fact that we cannot use compositions of
functions as in the complex case, but we need to consider an action of a matrix group on slice
regular functions. These discrepancies, due essentially to the non-commutativity of the quaternions, have some consequences in trying to apply arguments from complex analysis to this novel framework: we will need to choose some points along the real line (which is incidentally the center of the skew-field of quaternions). For further information {about the theory of quaternionic slice regularity} we refer to the  recent monograph by Gentili, Stoppato and Struppa \cite{librospringer2}. 

Bisi and Stoppato \cite{BSIndiana} gave a quaternionic version of the Schwarz--Pick Lemma in the framework of this theory. Analogously to what happens in the complex case, in Section \ref{sec:spl} we extend the notion of hyperbolic difference quotient $f^\star_p$ for slice regular functions from $\mathbb B$ to itself, thus the quaternionic Schwarz--Pick Lemma leads to a three-points Schwarz--Pick Lemma:
\begin{theorem} Let $\mathbb B=\{q\in\mathbb H:|q|<1\}$ be the open quaternionic unit ball. Suppose $f : \mathbb{B} \lto \mathbb{B}$ is a slice regular function which is not bijective. Let $p,s\in\mathbb B$. Then for all $q\in\mathbb B$
	\begin{equation*}
		\left| (\mathcal M_{f^\star_p(s)}\bullet f^\star_p)(q)\right|\leq \left|\mathcal M_s(q)\right|,
	\end{equation*} where $\mathcal M_{f^\star_p(s)}\bullet f^\star_p$ stands for $(1-f^\star_p\ast \overline{f^\star_p(s)})^{-\ast}\ast(f^\star_p-f^\star_p(s))$.
Equality holds for some $q\in\mathbb B\setminus\{s\}$ (and hence for any $q\in\mathbb B$) if and only if $f$ is a slice regular Blaschke product of degree two.
	
\end{theorem}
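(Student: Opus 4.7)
The plan is to associate to $f$ and $p$ a slice regular self-map $f^\star_p : \mathbb{B} \lto \mathbb{B}$, and then to apply the one-point quaternionic Schwarz--Pick Lemma of Bisi--Stoppato to $f^\star_p$ at the point $s$. The resulting estimate
$$|(\mathcal{M}_{f^\star_p(s)} \bullet f^\star_p)(q)| \leq |\mathcal{M}_s(q)|$$
is literally the inequality to be proved, so the content of the argument lies in defining $f^\star_p$ correctly and in showing that, under the hypothesis that $f$ is not bijective, it takes values strictly inside $\mathbb{B}$.

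\textbf{Definition of the hyperbolic difference quotient.} By analogy with the complex Beardon--Minda formula $[f(z), f(p)]/[z, p]$, I would set
$$f^\star_p \assign (\mathcal{M}_{f(p)} \bullet f) \ast \mathcal{M}_p^{-\ast},$$
where $\mathcal{M}_{f(p)} \bullet f = (1 - f \ast \overline{f(p)})^{-\ast} \ast (f - f(p))$ and $\mathcal{M}_p^{-\ast}$ denotes the regular reciprocal of $\mathcal{M}_p$. Both factors are meromorphic slice regular on $\mathbb{B}$; the simple pole of $\mathcal{M}_p^{-\ast}$ at $p$ is cancelled by the corresponding zero of $\mathcal{M}_{f(p)} \bullet f$ at $p$, so $f^\star_p$ extends to a slice regular function on all of $\mathbb{B}$. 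This cancellation is immediate when $p\in\mathbb{R}$ and otherwise relies on the zero-structure theory of the $\ast$-product reviewed in Section \ref{sec:prel}; the value at $p$ should come out to a suitably normalised Cullen derivative, in analogy with the complex case.

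\textbf{Main obstacle: the bound $|f^\star_p|\leq 1$.} On the complex slice $\mathbb{C}_I$ containing $p$ the $\ast$-product reduces to the ordinary product and the bound follows from applying the Bisi--Stoppato Schwarz--Pick inequality to $f|_{\mathbb{B}\cap\mathbb{C}_I}$. For $q$ off that slice one cannot directly equate $|\mathcal{M}_{f(p)}\bullet f(q)|$ with the naive pointwise quotient because of noncommutativity; the idea is to reconstruct $f^\star_p(q)$ from its values on a single slice via the representation formula and then to bound the resulting affine combination using the already obtained slice estimate. Because $f$ is not bijective, the maximum modulus principle for slice regular functions prevents $|f^\star_p|$ from attaining $1$ in $\mathbb{B}$, so $f^\star_p$ is genuinely a self-map of $\mathbb{B}$. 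This off-slice reduction is the step where I expect the noncommutative combinatorics to require the most care.

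\textbf{Equality case.} Once $f^\star_p : \mathbb{B} \lto \mathbb{B}$ is known to be slice regular, the Bisi--Stoppato Schwarz--Pick Lemma applied to it at $s$ yields the claimed three-point inequality. If equality holds for some $q \neq s$, the equality case of Bisi--Stoppato forces $f^\star_p$ to be a regular M\"obius transformation of $\mathbb{B}$; inverting the defining relation $f^\star_p = (\mathcal{M}_{f(p)} \bullet f) \ast \mathcal{M}_p^{-\ast}$ then expresses $f$ as a $\ast$-product of two regular M\"obius factors, which by the characterisation in Section \ref{sec:prel} is precisely a slice regular Blaschke product of degree two.
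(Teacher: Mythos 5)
Your overall strategy coincides with the paper's: realize $f^\star_p$ as a slice regular self-map of $\mathbb{B}$ and apply the one-point quaternionic Schwarz--Pick Lemma (Theorem \ref{thm:spl}) to it at $s$. However, two of the steps that carry the actual content break down. First, the order of the $\ast$-factors is not a matter of convention here: the paper defines $f^\star_p=\mathcal{M}_p^{-\ast}\ast(\mathcal{M}_{f(p)}\bullet f)$, with $\mathcal{M}_p^{-\ast}$ on the \emph{left}. The singularity is removable precisely because $g\assign\mathcal{M}_{f(p)}\bullet f$ vanishes at $p$ and therefore admits the \emph{left} factorization $g=(q-p)\ast h$, whence $\mathcal{M}_p^{-\ast}\ast g=(1-q\overline p)\ast(q-p)^{-\ast}\ast(q-p)\ast h=(1-q\overline p)\ast h$ is regular on all of $\mathbb{B}$. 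With your order $g\ast\mathcal{M}_p^{-\ast}$ you would need a right factor $(q-p)$; but a zero of $g$ at $p$ only produces $g=h'\ast(q-p'')$ with $p''=h'(p)^{-1}p\,h'(p)\in\mathbb{S}_p$, which generically differs from $p$ when $p\notin\mathbb{R}$, and the quotient then keeps genuine singularities on $\mathbb{S}_p$. (It would also change the function appearing in the statement.)

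Second, your route to the key bound $|f^\star_p|\leq 1$ does not go through. The $\ast$-product does \emph{not} reduce to the pointwise product on the slice $\mathbb{C}_I$ through $p$ unless the left factor maps that slice into itself, and the representation formula only yields $|f(x+yI)|\leq \sqrt{2}\,\max_{\pm}|f(x\pm yJ)|$ for a general slice regular $f$: the left coefficients $\tfrac{1\pm IJ}{2}$ have moduli $\sqrt{(1\pm\operatorname{Re}(IJ))/2}$, whose sum can reach $\sqrt{2}$, so "bounding the affine combination" loses a constant rather than giving $1$. The Maximum Modulus Principle cannot repair this, since $f^\star_p$ need not extend to $\partial\mathbb{B}$. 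The paper obtains the bound by quoting Bisi--Stoppato directly (Proposition \ref{prop:splzero}): for a slice regular self-map $g$ of $\mathbb{B}$ with $g(p)=0$ one has $|(\mathcal{M}_p^{-\ast}\ast g)(q)|\leq 1$, strictly unless $g=\mathcal{M}_p u$; applied to $g=\mathcal{M}_{f(p)}\bullet f$ this is exactly $|f^\star_p|\leq 1$, strict because $f$ is not bijective, i.e.\ not a regular M\"obius transformation. Your equality-case sketch is essentially the paper's, except that passing from "$\mathcal{M}_p\ast f^\star_p=\mathcal{M}_{f(p)}\bullet f$ is a Blaschke product of degree two" to "$f$ is a Blaschke product of degree two" is not a formal inversion: it uses the converse part of Proposition \ref{prop:blaschke_deg}.
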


In Section  \ref{sec:conseq} we show some consequences of these results. In particular, following \cite{BeardonMinda04} and restricting to the case of a fixed point at the origin, by the connection between the {slice regular derivative} $\partial_C f(p)$ and the {hyperbolic derivative} $f^h(p)\assign f^\star_p(p)$ for slice regular self-maps on $\mathbb  B$, we get the quaternionic version of some inequalities well established in classical complex analysis, such as Dieudonn\'e's or Goluzin's estimates (assuming $f(0)=0$).

The definition of iterated hyperbolic difference quotients in the quaternionic setting and the consequent multipoint Schwarz--Pick Lemma are presented in Section \ref{sec:iterated}.

In Section \ref{sec:np} we consider the Nevanlinna--Pick interpolation problem with $n$ points in the quaternionic unit ball $\mathbb B$, looking for a slice regular interpolating function $f:\mathbb B\lto\mathbb B$ for the prescribed nodes and values. 
Here we give a generalization to the quaternionic setting of the conditions and the algorithm presented in \cite{BaribeauRivardWegert} for the complex case, in terms of hyperbolic difference quotients. As recalled before, since the quaternionic setting is  wilder than the complex one and not commutative, we have to restrict the problem to the case of \emph{real} nodes. Given $n$ distinct real points $r_1,\ldots, r_n\in(-1,1)$ and $n$ values $s_1,\ldots,s_n$ in the quaternionic unit ball $\mathbb B$, one defines by iteration quantities $Q_{\kappa}^\ell$ for $\kappa=1,\ldots,n-1$ and $\ell=\kappa+1,\ldots,n$ which are generically expressed as quotients of M\"obius maps, starting from the given nodes and values. Then we have a criterion in terms of the last term $Q_{n-1}^n$:
\begin{theorem}
There exists a slice regular function $f:\mathbb B\lto \mathbb B$ such that $f(r_m)=s_m$ for all $m=1,\ldots,n$ if and only $Q_{n-1}^n$ is a quaternion of modulus less or equal to one. If $|Q_{n-1}^n|<1$, then there is an infinite family of solutions, while if $|Q_{n-1}^n|=1$ there exists a unique solution given by a regular Blaschke product.
\end{theorem}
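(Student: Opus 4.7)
The plan is to proceed by induction on $n$, implementing a quaternionic Schur-Nevanlinna algorithm that generalizes the construction of Baribeau, Rivard and Wegert \cite{BaribeauRivardWegert}. For $n = 1$ the statement is trivial, since any $s_1 \in \mathbb{B}$ is attained by the constant function $f \equiv s_1$. For the inductive step, the key observation is that if $f:\mathbb{B}\to\mathbb{B}$ is slice regular with $f(r_1)=s_1$, then by the Schwarz-Pick lemma of Section \ref{sec:spl} its hyperbolic difference quotient $g := f^\star_{r_1}$ is again a slice regular self-map of $\mathbb{B}$, and the values $g(r_\ell)$ are given by an explicit M\"obius-type expression involving only $r_1, r_\ell, s_1, s_\ell$; realness of $r_1$ removes any noncommutativity ambiguity in this evaluation. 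By construction these are the quantities $Q_1^\ell$ for $\ell = 2, \ldots, n$. Thus the $n$-point problem for $f$ with data $(r_m, s_m)$ is equivalent to the $(n-1)$-point problem for $g$ with data $(r_\ell, Q_1^\ell)_{\ell=2}^n$, and by the inductive hypothesis its solvability is governed by $Q_{n-1}^n$.

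For the converse direction and the parametrization of solutions, given any slice regular $g: \mathbb{B}\to\mathbb{B}$ interpolating the reduced data, one inverts the hyperbolic-difference-quotient operation: prescribing $f(r_1) = s_1$ and $f^\star_{r_1} = g$ determines $f$ uniquely as a slice regular self-map of $\mathbb{B}$, using the regular M\"obius calculus recalled in Section \ref{sec:prel}; realness of $r_1$ again ensures the reconstruction is unambiguous and that $|f|<1$ on $\mathbb{B}$. This $f$ then satisfies $f(r_m) = s_m$ for all $m$. Iterating, any interpolant at the final level (a slice regular self-map of $\mathbb{B}$ with prescribed value $Q_{n-1}^n$ at $r_n$) produces an interpolant at the top. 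When $|Q_{n-1}^n|<1$, the base interpolants form an infinite family (the constant $Q_{n-1}^n$ together with infinitely many nontrivial perturbations), giving infinitely many $f$. When $|Q_{n-1}^n|=1$, the quaternionic maximum modulus principle forces the base interpolant to be the constant $Q_{n-1}^n$, and iterating the lifting then shows that the resulting $f$ is a regular Blaschke product. Finally, if $|Q_{n-1}^n|>1$ no slice regular self-map of $\mathbb{B}$ can attain $Q_{n-1}^n$ at $r_n$, so no interpolant exists.

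The main obstacle will be the careful bookkeeping of $\ast$-products, $\ast$-inverses, and the $\bullet$-composition appearing throughout the algorithm. Although real nodes substantially tame the noncommutativity (real numbers lie in the centre of $\mathbb{H}$ and commute with every quaternion under the $\ast$-product), the intermediate data $s_m$ and $Q_\kappa^\ell$ remain genuinely quaternionic, so the regular M\"obius transformations appearing at each iteration step must be treated as noncommutative objects. In particular, one must check that the quantities $Q_\kappa^\ell$ defined abstractly in Section \ref{sec:np} really coincide with the iterated hyperbolic-difference-quotient values $f^{\{\kappa\}}_{r_1,\ldots,r_\kappa}(r_\ell)$, independently of the particular interpolant $f$. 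A subtler point is the extremal case: it must be verified that successively lifting a unit-modulus constant through the algorithm genuinely produces a regular Blaschke product of the correct degree, rather than some other boundary-valued object, so as to ensure uniqueness.
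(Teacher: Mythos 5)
Your plan is essentially the paper's own proof: the paper implements exactly this Schur--Nevanlinna reduction $f \mapsto f^{\star}_{r_1}$ and the inverse lifting $g \mapsto \mathcal M_{-s_1}\bullet(\mathcal M_{r_1}\ast g)$, merely unrolling your induction into an explicit $n$-row scheme of the $Q_\kappa^\ell$. The two points you flag are resolved there just as you anticipate: realness of all the nodes makes every evaluation pointwise, so $f^{\{\kappa\}}_{r_1,\ldots,r_\kappa}(r_\ell)=Q_\kappa^\ell$ for any solution, and in the extremal case one identifies the first level $\kappa_0$ at which the $Q$'s become unimodular and lifts that constant upward, the result being a regular Blaschke product of degree $\kappa_0$.
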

The terms $Q_\kappa^\ell$ correspond to values of the iterated difference quotients of any solution of the problem: $Q_\kappa^\ell=f^{\{\kappa\}}_{r_1,\ldots,r_\kappa}(r_\ell)$. The algorithm used for proving this result also provides explicit formulas for the solutions.

When the nodes are not all real, the algorithm does not work anymore: we give some evidence in the last part of Section \ref{sec:np}.

A quaternionic general Nevanlinna--Pick Theorem in terms of the positive semi-definition of the associated Pick matrix has already been proved by Alpay, Bolotnikov, Colombo and Sabadini in \cite{alpaybolotnikovcolombosabadini} with a different approach based on linear algebra and functional analysis techniques. Although this result is more general, it gives less explicit formulas for the solutions.

\section{Preliminaries}\label{sec:prel}

\subsection{Slice regular functions}\mbox{}

Let $\mathbb H=\mathbb R+\mathbb Ri+\mathbb Rj+\mathbb Rk$ denote the skew-field of quaternions. Let $\mathbb S$ be the sphere of imaginary units: $\mathbb S=\{q\in\mathbb H:q^2=-1\}=\{q\in\mathbb H:\tmop{Re} (q)=0,|q|=1\}$. For any $I\in\mathbb S$, let $\mathbb C_I$ denote the complex subspace $\{x+y I\in\mathbb H:x,y\in\mathbb R\}$, which we will call a \emph{complex slice}. We observe that $\mathbb C_{-I}=\mathbb C_I$ and that $\mathbb H$ is the union of such slices:
\begin{equation*}
	\mathbb H=\bigcup_{I\in\mathbb S} \mathbb C_I.
\end{equation*}
Moreover, all the slices have the real line in common: $\bigcap_{I\in\mathbb S} \mathbb C_I=\mathbb R$. This decomposition of the quaternionic space is sometimes known as \emph{book structure}.
\begin{definition}
Let $\Omega\subseteq\mathbb H$ be a domain. For any $I\in\mathbb S$, let $\Omega_I\assign \Omega\cap\mathbb C_I=\{x+yI\in\Omega:x,y\in\mathbb R\}$. A function $f:\Omega\lto\mathbb H$ is said to be \emph{(left) slice regular} on $\Omega$ ($f\in\mathcal{SR}(\Omega)$) if it has continuous partial derivatives and any restriction $f_I=f\mid_{\Omega_I}$ for $I\in\mathbb S$ is holomorphic, in the sense that $\overline\partial_I f_I(x+y I)\equiv 0$ on $\Omega_I$, where $\overline{\partial}_I$ is the differential operator
\[
 \overline\partial_I\assign \phi_I\mapsto \frac{1}{2}\left(\partial_x\phi_I+I\partial_y\phi_I\right),\qquad\phi_I:\Omega_I\lto\mathbb H.
\]
\end{definition}
A trivial consequence of this definition, observing that $\mathbb H=\mathbb C_I + \mathbb C_I J$ for any $I\in\mathbb S$ and $J\in\mathbb S$ orthogonal to $I$ with respect to the standard scalar product of $\mathbb R^4$, is the following:
\begin{lemma}[Splitting Lemma, {\cite[Lemma 1.3]{librospringer2}}]
	Let $f:\Omega\lto \mathbb H$ be slice regular on a domain $\Omega\subseteq \mathbb H$. For any $I\in\mathbb S$ and $J\in\mathbb S$ such that $J\perp I$, there exist two holomorphic functions $F,G: \Omega_I\lto\mathbb H$, depending on $I$ and $J$, such that the restriction $f_I=f\mid_{\Omega_I}$ writes down as
	\[
	f_I(z)=F(z)+G(z)J\qquad\text{for all }z=x+yI\in\Omega_I.
	\]
\end{lemma}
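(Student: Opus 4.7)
First I would use the fact that whenever $J\in\mathbb S$ is orthogonal to $I$ with respect to the standard Euclidean inner product on $\mathbb H\cong\mathbb R^4$, the set $\{1,I,J,IJ\}$ is an orthonormal $\mathbb R$-basis of $\mathbb H$; equivalently, $\mathbb H=\mathbb C_I\oplus \mathbb C_I J$ as real vector spaces (and as right $\mathbb C_I$-modules). Consequently, for each $z\in\Omega_I$ the value $f_I(z)\in\mathbb H$ admits a unique expression $f_I(z)=F(z)+G(z)J$ with $F(z),G(z)\in\mathbb C_I$, and this defines two functions $F,G:\Omega_I\lto\mathbb C_I$ which are of class $C^1$ since $f$ is.

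Next, the goal is to check that $F$ and $G$ satisfy the Cauchy--Riemann equations on $\Omega_I$, identified with an open subset of $\mathbb C$ via $x+yI\leftrightarrow x+yi$. For this I would apply $\overline{\partial}_I$ to both sides of $f_I=F+GJ$. Because $J$ is constant (so the real partial derivatives $\partial_x,\partial_y$ commute with right multiplication by $J$) and because left multiplication by $I$ associates with right multiplication by $J$, a direct computation yields
\[
\overline{\partial}_I f_I \;=\; \overline{\partial}_I F \;+\; (\overline{\partial}_I G)\,J.
\]
By slice regularity of $f$, the left-hand side vanishes identically on $\Omega_I$. Since $\overline{\partial}_I F$ takes values in $\mathbb C_I$ while $(\overline{\partial}_I G)\,J$ takes values in $\mathbb C_I J$, the direct sum decomposition forces both summands to vanish separately, so $\overline{\partial}_I F \equiv 0$ and $\overline{\partial}_I G \equiv 0$. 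This is precisely the holomorphicity of $F$ and $G$ as maps from the complex domain $\Omega_I$ to $\mathbb C_I$.

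The only step that requires any care is the commutation check leading to $\overline{\partial}_I(F+GJ)=\overline{\partial}_I F+(\overline{\partial}_I G)\,J$; this uses nothing more than associativity of quaternion multiplication and the fact that $J$ does not depend on $(x,y)$, so no noncommutativity obstruction appears. I do not expect any genuinely hard obstacle: the content of the lemma is essentially a change of basis in $\mathbb H$ followed by one application of the operator $\overline{\partial}_I$.
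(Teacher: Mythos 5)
Your proof is correct and follows exactly the route the paper indicates (the decomposition $\mathbb H=\mathbb C_I\oplus\mathbb C_I J$, followed by applying $\overline{\partial}_I$ to $F+GJ$ and separating components); the paper itself only cites this as \cite[Lemma 1.3]{librospringer2} and remarks that it is a direct consequence of that splitting, which is what you carry out. The one small point worth noting is that the statement writes $F,G:\Omega_I\lto\mathbb H$, whereas your argument correctly pins down that they take values in $\mathbb C_I$, which is the sharper (and standard) form of the lemma.
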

\begin{definition}
Let $\Omega\subset\mathbb H$ be a domain and let $f:\Omega\lto\mathbb H$ be a slice regular function. For any $I\in\mathbb S$ we set $\partial_I f_I(x+yI)\assign \frac{1}{2}\left[\partial_x f_I (x+yI)-I\partial_yf_I(x+yI)\right]$. The \emph{regular derivative} (or \emph{Cullen derivative}) of $f$ is defined as
\begin{equation*}
\partial_C f(q)\assign \partial_I f_I (q)\quad \text{ if }q\in\Omega_I.
\end{equation*}

The definition is well-posed since if $x_0\in\bigcap_{I\in\mathbb S} \Omega_I=\Omega\cap\mathbb R$, then $\partial_I f_I (x_0)=\partial_x f(x_0)$ for any imaginary unit $I$.

It is clear by the definition that the Cullen derivative of a slice regular function $f$  is again slice regular on the same domain, hence one can define higher (Cullen) derivatives $f^{(n)}(q)$ as well:
\begin{equation*}
	f^{(n)}(q)\assign (\partial_I)^n f_I (q)\quad \text{ if }q\in\Omega_I.
\end{equation*}
\end{definition}

\begin{example}
	It is easy to check that all polynomial functions \emph{with right quaternionic coefficient}
	\[ f(q)= q^N a_N + q^{N-1} a_{N-1}+\ldots+q a_1 + a_0 \]
	are slice regular functions over $\mathbb H$; moreover the power series of the form 
	\[ g(q)=\sum_{m=0}^{+\infty} q^m a_m \] are slice regular function over the ball of convergence $B(0,\varrho)\assign \{q\in\mathbb H: |q|<\varrho\}$, where $\varrho^{-1}=\limsup\limits_{m\to+\infty} |a_m|^{1/m}$.
	
	On the other side, the function $q\mapsto i\cdot q$ is not slice regular, since if we consider the slice $\mathbb C_j$, then the relative restriction is $x+yj\mapsto xi+yk$ and $\overline{\partial}_j(xi+yk)=\frac{1}{2}(i+jk)=\frac{2i}{2}=i$. 
\end{example}

\begin{remark}
	The sum $f+g$ of two slice regular functions $f,g\in\mathcal{SR}(\Omega)$ is still slice regular. 
	The last example shows that the standard pointwise product $fg$ of two slice regular functions $f,g$ is not always slice regular. Moreover, the composition of two slice regular functions $g\circ f$, if defined, is not in general slice regular: for instance $q\mapsto (q\cdot i)^2=q\cdot i\cdot q\cdot i$ is such that its restriction to $\mathbb C_j$ is $(x+yj)\mapsto (xi-yk)^2=-x^2-y^2$ and $\overline\partial_j(-x^2-y^2)=-(x+yj)\neq 0$ for all $x+yj\neq 0$.
\end{remark}
\begin{definition}
A domain $\Omega\subset \mathbb H$ is said a \emph{slice domain} if $\Omega \cap\mathbb R\neq\varnothing$ and any intersection $\Omega_I$ with a slice $\mathbb C_I$ is connected.

A domain $\Omega$ as above is \emph{axially symmetric} if for any of its points $x+yI$ (where $x,y\in\mathbb R$, $I\in\mathbb S$) the whole sphere $x+y\mathbb S\assign \{x+yJ:J\in\mathbb S\}$ is contained in $\Omega$.
\end{definition}
Making both these assumptions on the domain, we have the following remarkable representation formula for slice regular function.
\begin{proposition}[Representation Formula, {\cite[Thm. 1.16]{librospringer2}}]
	Let $f:\Omega\lto\mathbb{H}$ be a slice regular function defined on an axially symmetric slice domain
	Let $I,J,K\in\mathbb S$ such that $J\neq K$. Then
	\begin{equation*}
		f(x+yI)=(J-K)^{-1}[J f(x+yJ)-K f(x+yK)]+I(J-K)^{-1}[f(x+yJ)-f(x+yK)].
	\end{equation*}
	In particular, if we choose $K=-J$, we get
	\begin{equation}\label{eq:represent}
	f(x+yI)=\frac{1}{2}[f(x+yJ)+ f(x-yJ)]+\frac{IJ}{2}[f(x-yJ)-f(x+yJ)].
	\end{equation}
\end{proposition}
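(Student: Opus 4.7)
The strategy is to show that the right-hand side defines a slice regular function on $\Omega$ which coincides with $f$ on $\Omega\cap\mathbb R$; an identity principle then forces equality everywhere on $\Omega$. Accordingly, I would set
\begin{equation*}
g(x+yI)\assign (J-K)^{-1}[Jf(x+yJ)-Kf(x+yK)]+I(J-K)^{-1}[f(x+yJ)-f(x+yK)].
\end{equation*}
By the axial symmetry of $\Omega$, the points $x+yJ$ and $x+yK$ belong to $\Omega$ whenever $x+yI\in\Omega$, so $g$ is well defined on all of $\Omega$. At real points one has $g(x)=(J-K)^{-1}(J-K)f(x)=f(x)$, so $g$ and $f$ coincide on $\Omega\cap\mathbb R$, which is nonempty since $\Omega$ is a slice domain.

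Next I would verify that $g\in\mathcal{SR}(\Omega)$. Fix $L\in\mathbb S$ and set $F(x,y)\assign f(x+yJ)$, $G(x,y)\assign f(x+yK)$. The slice regularity of $f$ on $\mathbb C_J$ and $\mathbb C_K$ translates into $\partial_x F=-J\partial_y F$ and $\partial_x G=-K\partial_y G$. Writing $g_L(x+yL)=A(x,y)+L\,B(x,y)$ with $A=(J-K)^{-1}[JF-KG]$ and $B=(J-K)^{-1}[F-G]$, one computes
\begin{equation*}
2\,\overline\partial_L g_L=(\partial_x A-\partial_y B)+L(\partial_x B+\partial_y A).
\end{equation*}
Using $J(-J\partial_y F)=\partial_y F$ and $K(-K\partial_y G)=\partial_y G$, a short manipulation shows that both parenthesized terms vanish identically, so $\overline\partial_L g_L\equiv 0$.

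Since $g,f\in\mathcal{SR}(\Omega)$ coincide on $\Omega\cap\mathbb R$, which has real accumulation points inside every slice $\Omega_I$, the identity principle for slice regular functions on slice domains (see \cite[Ch.~1]{librospringer2}) yields $g\equiv f$ on $\Omega$. The special case $K=-J$ then follows by direct substitution: since $(J-K)^{-1}=(2J)^{-1}=-J/2$ and $-J^2=1$, the two summands simplify to $\tfrac12[f(x+yJ)+f(x-yJ)]$ and $\tfrac{IJ}{2}[f(x-yJ)-f(x+yJ)]$.

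The main obstacle is the slice-regularity computation for $g$: it is conceptually transparent, but the bookkeeping is delicate because $(J-K)^{-1}$, $L$, $J$, and $K$ generically do not commute. Keeping $(J-K)^{-1}$ on the left throughout and isolating the coefficient of $L$ until the final step should render the cancellations routine; the only quaternionic algebra one actually uses is $J^2=K^2=L^2=-1$ and associativity.
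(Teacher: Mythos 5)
The paper does not prove this proposition at all --- it is quoted verbatim from \cite[Thm.~1.16]{librospringer2} --- so there is no internal proof to compare against; your argument is essentially the standard textbook one, and its core is correct. The computation is right: with $A=(J-K)^{-1}[JF-KG]$ and $B=(J-K)^{-1}[F-G]$ one indeed gets $\partial_x A=(J-K)^{-1}[\partial_yF-\partial_yG]=\partial_y B$ and $\partial_x B=(J-K)^{-1}[-J\partial_yF+K\partial_yG]=-\partial_y A$, so the Cauchy--Riemann system holds and $\overline\partial_L g_L\equiv 0$; the reduction to $K=-J$ via $(2J)^{-1}=-J/2$ is also correct. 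The one point you should tighten is the claim that $g$ is a well-defined function on all of $\Omega$: a nonreal point has two representations $x+yI=x+(-y)(-I)$, and the formula evaluated at $(x,y,I)$ versus $(x,-y,-I)$ is not \emph{a priori} the same quaternion --- their equality amounts to the $J$-independence of the spherical value and spherical derivative of $f$, which is essentially the statement being proved, so invoking it would be circular. The clean fix is already implicit in what you wrote: do not define a global $g$; instead, for each fixed $L\in\mathbb S$ consider only the function $\psi_L(x+yL)=A(x,y)+LB(x,y)$ on $\Omega_L$ (where $(x,y)$ are the unambiguous coordinates of that slice), show $\overline\partial_L\psi_L=0$ by the computation above, and note that $\psi_L$ and $f_L$ agree on the nonempty open set $\Omega\cap\mathbb R$. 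Then the ordinary complex identity principle on the connected set $\Omega_L$ (after splitting $\mathbb H=\mathbb C_L+\mathbb C_L J'$ as in the Splitting Lemma) gives $\psi_L\equiv f_L$, which is exactly the asserted formula for $I=L$; no global slice-regular identity principle, and hence no global well-definedness of $g$, is needed.
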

It is important to observe that formula \eqref{eq:represent} says that the values of $f$ on a slice determine the whole function.

If we consider balls centered in the origin as domains, we get that all functions which are slice regular can be written as power series:
\begin{theorem}[{\cite[Thm. 1.10]{librospringer2}}]
Let $\Omega= B(0,R)\assign \{q\in\mathbb H: |q|<R\}$ for some $R>0$. Then for any $f:\Omega\lto\mathbb H$ slice regular we have
\[ f(q)=\sum_{m=0}^{+\infty} q^m (m!)^{-1}f^{(m)}(0). \]
\end{theorem}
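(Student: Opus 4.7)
The plan is to reduce the quaternionic power series expansion to the well-known complex case slice by slice, and then propagate the identity to the whole ball by the Representation Formula. The key tool is the Splitting Lemma, which converts the problem into an honest complex-analytic problem on a disk.

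To begin, I would fix an imaginary unit $I\in\mathbb S$ and choose some $J\in\mathbb S$ with $J\perp I$. Since $\Omega=B(0,R)$ is axially symmetric, the slice $\Omega_I=\{z\in\mathbb C_I:|z|<R\}$ is an ordinary open disk in the complex plane $\mathbb C_I$. By the Splitting Lemma, there exist holomorphic $F,G\colon \Omega_I\lto \mathbb C_I$ (in the classical one-complex-variable sense) such that $f_I(z)=F(z)+G(z)J$ for every $z=x+yI\in\Omega_I$. Classical complex analysis gives convergent Taylor expansions $F(z)=\sum_{m=0}^{+\infty} z^m a_m$ and $G(z)=\sum_{m=0}^{+\infty} z^m b_m$ with coefficients $a_m,b_m\in\mathbb C_I$, the convergence being absolute and locally uniform on the disk of radius $R$. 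Setting $c_m\assign a_m+b_m J\in\mathbb H$ one obtains the expansion
\begin{equation*}
f_I(z)=\sum_{m=0}^{+\infty} z^m c_m\qquad\text{for all }z\in\Omega_I,
\end{equation*}
and hence, by the Cauchy--Hadamard formula, $\limsup_{m\to+\infty}|c_m|^{1/m}\leq R^{-1}$.

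Next I would identify the coefficients $c_m$ with Cullen derivatives at the origin. By definition $\partial_C f(q)=\partial_I f_I(q)$ for $q\in\Omega_I$; since $f_I$ is holomorphic on $\Omega_I$, the operator $\partial_I$ coincides with the ordinary holomorphic derivative on $\mathbb C_I$, so term-by-term differentiation of the series above (which is legitimate inside the disk of convergence) yields $f^{(m)}(0)=\partial_I^m f_I(0)=m!\,c_m$, i.e.\ $c_m=(m!)^{-1}f^{(m)}(0)$.

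Finally, I would propagate the equality from the single slice $\mathbb C_I$ to the whole ball $B(0,R)$. Consider the function
\begin{equation*}
g(q)\assign \sum_{m=0}^{+\infty} q^m (m!)^{-1} f^{(m)}(0),
\end{equation*}
which, by the example quoted in the preliminaries, is slice regular on $B(0,R)$ (the bound on $|c_m|^{1/m}$ ensures convergence there). By construction $g_I(z)=f_I(z)$ on $\Omega_I$, so by the Representation Formula~\eqref{eq:represent} applied with the distinguished unit $J\assign I$ one has, for every $K\in\mathbb S$ and $x+yK\in B(0,R)$,
\begin{equation*}
f(x+yK)=\tfrac{1}{2}[f_I(x+yI)+f_I(x-yI)]+\tfrac{KI}{2}[f_I(x-yI)-f_I(x+yI)],
\end{equation*}
and the same identity with $g$ in place of $f$. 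Since the right-hand sides coincide, $f\equiv g$ on $B(0,R)$, which is the desired expansion.

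The only step requiring genuine care is the passage from the slicewise expansion to a globally slice regular power series: one must verify that the coefficients extracted from the Splitting Lemma really are quaternions (not $\mathbb C_I$-valued only), that convergence transfers from the disk $\Omega_I$ to the full ball, and that the Cullen derivative at $0$ is slice-independent (which follows from $0\in\mathbb R=\bigcap_{I\in\mathbb S}\mathbb C_I$, ensuring $\partial_I^m f_I(0)=\partial_x^m f(0)$ is the same for every $I$). With these observations in hand, the Representation Formula does the rest.
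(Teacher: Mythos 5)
Your argument is correct. The paper does not prove this statement --- it is quoted from \cite[Thm.~1.10]{librospringer2} --- and your proof is essentially the standard one from that reference: Splitting Lemma on a slice, classical Taylor expansion of the two holomorphic components, recombination of the coefficients $c_m=a_m+b_mJ$, and identification $c_m=(m!)^{-1}f^{(m)}(0)$ via $\partial_I=\partial_x$ on holomorphic restrictions. The one point where you deviate from the source is the propagation step: you invoke the Representation Formula (Thm.~1.16 of the monograph, which there appears \emph{after} the expansion theorem), whereas the standard proof simply runs the splitting argument on every slice and observes that the resulting coefficients $(m!)^{-1}\partial_x^m f(0)$ do not depend on the slice because $0$ is real --- exactly the observation you make in your closing paragraph, which by itself already finishes the proof without the Representation Formula (or, alternatively, the Identity Principle applied to $f$ and $g$, which agree on $\Omega_I$, would do). This is a matter of logical economy rather than correctness: the Representation Formula for axially symmetric slice domains is established independently of the power series expansion, so no circularity arises, but if you want an argument consistent with the ordering of the cited monograph you should replace that step by the slice-independence observation you already have in hand.
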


It can be defined a product between such functions which preserves slice regularity:
\begin{definition}
	Let $f(q)=\sum_{m=0}^{+\infty} q^m a_m$ and $g(q)=\sum_{m=0}^{+\infty} q^m b_m$ be two slice regular functions on a ball $B=B(0,R)$. Their \emph{$\ast$-product} $f\ast g$ is the slice regular function on $B$ given by \[ (f\ast g) (q)\assign \sum_{m=0}^{+\infty} q^m \left( \sum_{\ell=0}^{m} a_{m-\ell} b_{\ell}\right). \]
	This power series still converges in $B(0,R)$. 
	Since the product in $\mathbb H$ is associative, then also the $\ast$-product is associative; however, it is clearly noncommutative.
\end{definition}
In order to evaluate at a single point the $\ast$-product, we have the following formula.
\begin{proposition}[{\cite[p. 42]{librospringer2}}]\label{prop:formulastar}
	For $f,g$ as in the previous definition, it is
	\[ (f\ast g)(q)= \begin{cases}
		0 &\text{if }f(q)=0\\ f(q) g(f(q)^{-1} q f(q)), & \text{otherwise.}
	\end{cases} \]
\end{proposition}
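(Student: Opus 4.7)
The plan is to prove an intermediate identity
\[ (f\ast g)(q)=\sum_{m=0}^{\infty} q^m\, f(q)\, b_m, \]
from which both cases of the claim follow. Starting from the definition of the $\ast$-product and substituting $k=n-m$, I would exploit the absolute convergence of the double series $\sum_{m,k\geq 0} |q|^{m+k}|a_k||b_m|\leq\bigl(\sum_k|q|^k|a_k|\bigr)\bigl(\sum_m|q|^m|b_m|\bigr)<\infty$, valid for $|q|<R$, to rearrange
\[ \sum_{n=0}^{\infty} q^n \sum_{m=0}^{n} a_{n-m} b_m = \sum_{m=0}^{\infty}\sum_{k=0}^{\infty} q^{m+k} a_k b_m = \sum_{m=0}^{\infty} q^m \Bigl(\sum_{k=0}^{\infty} q^k a_k\Bigr) b_m = \sum_{m=0}^{\infty} q^m f(q)\, b_m. \]
Here pulling $q^m$ out on the left uses $q^{m+k}=q^m q^k$, and pulling $b_m$ out on the right uses that $b_m$ does not depend on $k$; no commutativity between $q$ and the coefficients $a_k$ is invoked.

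With this intermediate identity in hand, the first case is immediate: if $f(q)=0$, then every summand on the right-hand side is zero, so $(f\ast g)(q)=0$. For the second case, set $c:=f(q)\neq 0$. A telescoping argument that cancels the interior $cc^{-1}$ factors yields $(c^{-1} q c)^m=c^{-1} q^m c$ for every $m\geq 0$, hence $q^m c=c\,(c^{-1} q c)^m$, and therefore
\[ (f\ast g)(q) = \sum_{m=0}^{\infty} c\,(c^{-1} q c)^m b_m = c\sum_{m=0}^{\infty} (c^{-1} q c)^m b_m = f(q)\, g\bigl(f(q)^{-1} q f(q)\bigr). \]
The series on the right converges because $|c^{-1} q c|=|c|^{-1}|q||c|=|q|<R$, and $c$ may be factored out of the sum because it does not depend on $m$.

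The argument is essentially formal bookkeeping, and the only delicate aspect is to ensure that every factorisation respects the non-commutativity of $\mathbb H$. The two algebraic inputs are that powers of the single quaternion $q$ commute with each other, giving $q^{m+k}=q^m q^k$, and that conjugation by a non-zero $c\in\mathbb H$ is a ring automorphism, giving $(c^{-1}qc)^m=c^{-1}q^m c$. The main \emph{obstacle}, such as it is, is recognising that the opening reordering is legitimate; for this one checks the absolute convergence of the Cauchy-type double sum, which uses only the multiplicativity $|ab|=|a||b|$ in $\mathbb H$ together with the hypothesis $|q|<R$.
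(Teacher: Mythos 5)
Your proof is correct, and the paper itself gives no proof of this proposition — it is quoted with a citation to \cite[p.~42]{librospringer2}, where essentially the same argument (rearranging the Cauchy product into $\sum_m q^m f(q) b_m$ and then conjugating $q$ by $f(q)$) is the standard one. Both your algebraic steps ($q^{m+k}=q^mq^k$ and $(c^{-1}qc)^m=c^{-1}q^mc$) and the absolute-convergence justification for the reordering are sound.
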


The $\ast$-product, together with the standard sum, gives to $\mathcal{SR}(B(0,R))$ the structure of a (noncommutative) $\mathbb R$-algebra.

\begin{definition}
	A function $f:\Omega\lto \mathbb H$ is \emph{slice preserving} if for any $I\in\mathbb S$ it is $f(\Omega_I)\subseteq \mathbb C_I$.
	
	It is \emph{one slice preserving} if there exists a unique $I\in\mathbb S$ so that $f$ is $\mathbb C_I$-preserving: $f(\Omega_I)\subseteq \mathbb C_I$.
\end{definition}

\begin{proposition}\label{prop:slice_pres}Let $f\in\mathcal{SR}(B(0,R))$, $f(q)=\sum_{m=0}^{+\infty} q^m a_m$.
	\begin{enumeratealpha}
		\item $f$ is $\mathbb C_I$-preserving if and only if $a_m\in\mathbb C_I$ for all $m$.
		\item $f$ is slice preserving if and only if $a_m\in\mathbb R$ for all $m$.
	\end{enumeratealpha}
	
\end{proposition}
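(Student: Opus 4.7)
The plan is to reduce both parts to the standard uniqueness of Taylor coefficients for complex power series, via the Splitting Lemma style decomposition of $\mathbb{H}$ as $\mathbb{C}_I \oplus \mathbb{C}_I J$ for any $J\in\mathbb{S}$ orthogonal to $I$. Part (b) will then follow from part (a) essentially for free, using $\bigcap_{I\in\mathbb{S}} \mathbb{C}_I = \mathbb{R}$.

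\textbf{Part (a).} For the implication $(\Leftarrow)$, fix $I\in\mathbb{S}$ and $q=x+yI\in B(0,R)\cap\mathbb{C}_I$. Since $\mathbb{C}_I$ is a commutative subfield of $\mathbb{H}$ isomorphic to $\mathbb{C}$, every $q^m$ lies in $\mathbb{C}_I$, and if each $a_m\in\mathbb{C}_I$ then each summand $q^m a_m\in\mathbb{C}_I$; passing to the limit of a convergent series in the closed subspace $\mathbb{C}_I$ gives $f(q)\in\mathbb{C}_I$. For the implication $(\Rightarrow)$, pick any $J\in\mathbb{S}$ with $J\perp I$ and decompose each coefficient as $a_m=\alpha_m+\beta_m J$ with $\alpha_m,\beta_m\in\mathbb{C}_I$. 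For $q\in B(0,R)\cap\mathbb{C}_I$, associativity and the fact $q^m\in\mathbb{C}_I$ give
\[
f(q)=\sum_{m=0}^{+\infty} q^m\alpha_m \;+\; \Bigl(\sum_{m=0}^{+\infty} q^m\beta_m\Bigr) J \;=\; F(q)+G(q) J,
\]
with $F,G\colon B(0,R)\cap\mathbb{C}_I\to\mathbb{C}_I$ holomorphic. The hypothesis $f(B(0,R)\cap\mathbb{C}_I)\subseteq\mathbb{C}_I$ forces the $\mathbb{C}_I J$-component to vanish, i.e.\ $G\equiv 0$ on $B(0,R)\cap\mathbb{C}_I$. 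Identifying $\mathbb{C}_I$ with $\mathbb{C}$, $G$ is a genuine complex-analytic power series on a disk; by the uniqueness of Taylor coefficients, every $\beta_m$ is zero, hence $a_m=\alpha_m\in\mathbb{C}_I$.

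\textbf{Part (b).} If all $a_m\in\mathbb{R}$, then $a_m\in\mathbb{C}_I$ for every $I\in\mathbb{S}$, so by part (a) the function $f$ is $\mathbb{C}_I$-preserving for every $I\in\mathbb{S}$, which is exactly the slice-preserving property. Conversely, if $f$ is slice preserving, then part (a) gives $a_m\in\mathbb{C}_I$ for every $I\in\mathbb{S}$, whence $a_m\in\bigcap_{I\in\mathbb{S}}\mathbb{C}_I=\mathbb{R}$.

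The only delicate point is the well-posedness of the splitting $a_m=\alpha_m+\beta_m J$ and the verification that the resulting series $F$ and $G$ still converge on $B(0,R)\cap\mathbb{C}_I$; this is immediate from $|\alpha_m|,|\beta_m|\le|a_m|$, so no subtlety actually arises. The conceptual content is entirely the identity principle for complex power series, imported one slice at a time.
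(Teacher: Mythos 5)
Your proof is correct and follows essentially the same route as the paper: both directions reduce to standard complex power-series facts on a single slice. The only cosmetic difference is in the forward implication of (a), where the paper observes directly that $a_m=(m!)^{-1}f^{(m)}(0)\in\mathbb C_I$ because the derivatives of a $\mathbb C_I$-preserving function stay in $\mathbb C_I$, while you route the same identity-principle argument through the splitting $f_I=F+GJ$ and the vanishing of $G$; these amount to the same computation.
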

\begin{proof}\mbox{}
	
	\begin{enumeratealpha}
		\item\label{1list-a} If $a_m$ are all in $\mathbb C_I$, then clearly $f$ restricted to that slice has values in $\mathbb C_I$; on the other hand, for a slice preserving and regular function it is $f^{(m)}(q)\in\mathbb C_I$ for all $q$ in the $I$-slice, thus $a_m=(m!)^{-1}f^{(m)}(0)\in\mathbb C_I$.
		\item follows from \ref{1list-a}) since $\bigcap_{I\in\mathbb S}\mathbb C_I=\mathbb R$.\qedhere
	\end{enumeratealpha}
\end{proof}

If $f\in\mathcal{SR}(B(0,R))$ is a slice preserving regular function and $g$ is slice regular on the same ball, then $f\ast g=fg$ and also $f\ast g= g\ast f$ by simply using power series expansion and the fact that the coefficients for $f$ are real. Furthermore, if $f,g$ preserve the same slice $\mathbb C_I$, then it is $f\ast g=g\ast f$.

If $f(q)=\sum_{m=0}^{+\infty} q^m a_m\in\mathcal{SR}(B(0,R))$, we define its \emph{regular conjugate function} as $f^c(q)\assign \sum_{m=0}^{+\infty} q^m \overline a_m$ (of course, $f^c$ is still slice regular on $B(0,R)$) and its \emph{symmetrization} as $f^s\assign f\ast f^c=f^c \ast f$. One checks the coefficients of $f^s$ as a power series are real: indeed
\[ \left|\tmop{Im} \left( \sum_{\ell=0}^m a_{m-\ell} \overline a_{\ell}\right)\right|=\frac{1}{2}\left|\sum_{\ell=0}^m a_{m-\ell} \overline a_{\ell}- \sum_{\ell=0}^m a_{\ell}\overline a_{m-\ell}\right|=0. \] By Proposition~\ref{prop:slice_pres}, $f^s$ is {slice preserving} and acts as a complex holomorphic transformation on each slice. Hence $(f^s)^{-1}$, defined on the points of $B(0,R)$ which are not in the zero set $\mathcal Z(f^s)$ of $f^s$, is still a slice regular function \footnote{$B(0,R)\setminus \mathcal{Z}(f^s)$ is still a domain, as shown in \cite[Lemma 5.2]{librospringer2}}. So we can define \[ f^{-\ast}\assign (f^s)^{-1}\ast f^c=f^c\ast (f^s)^{-1}=(f^s)^{-1}f^c\] as a slice regular function on $B(0,R)\setminus \mathcal Z(f^s)$: this has the property that $f^{-\ast}\ast f=f\ast f^{-\ast}=\tmop{id}$. We call $f^{-\ast}$ the \emph{$\ast$-inverse} of $f$. For any $q\in B(0,R)\setminus \mathcal Z(f^s)$ it is $f^{-\ast}(q)=f^s(q)^{-1}f^c(q)=f(f^c(q)^{-1}q f^c(q))^{-1}$ by applying Proposition \ref{prop:formulastar} to $f^s=f^c\ast f$. Analogously, if $g$ is another slice regular function in $B(0,R)$, then the \emph{(left) regular quotient} $f^{-\ast}\ast g$ is defined and slice regular in $B(0,R)\setminus \mathcal Z(f^s)$ and here it evaluates as

\begin{equation}\label{eq:quotfg}
	(f^{-\ast}\ast g)(q)=f^{-\ast}(q)\cdot g(f^{c}(q)^{-1}f^s(q)qf^s(q)^{-1}f^{c}(q))=(f^{-1}g)(f^c(q)^{-1}q f^c(q)),
\end{equation}
since by slice preservation of $f^s$ it is $f^s(q)q f^s(q)^{-1}=q$.

One may also define a \emph{right regular quotient} $g\ast f^{-\ast}$, which is still slice regular in $B(0,R)\setminus \mathcal Z(f^s)$; in general it is different from $f^{-\ast}\ast g$, but they coincide when $f$ and $g$ $\ast$-commute.

\begin{remark}
	The map $\mathcal T_f: q\mapsto f(q)^{-1} q f(q)$, which appears in the evaluation formula for the $\ast$-product, is not a slice regular map. However, if we restrict it to $B(0,R)\setminus \mathcal Z(f^s)$, it is a real differentiable function which preserves the module and the real part: $|\mathcal{T}_f(q)|=|q|$ and $\tmop{Re} (\mathcal T_f(q))=\tmop{Re}q$. Moreover, the map $\mathcal{T}_{f^c}$ defined on $B(0,R)\setminus \mathcal Z(f^s)$ by $q\mapsto f^c(q)^{-1}q f^c(q)$ is its inverse: $\mathcal T_f \circ \mathcal T_{f^c}=\mathcal T_{f^c}\circ \mathcal T_f=\tmop{id}$ (see \cite[Prop. 5.32]{librospringer2}). So $\mathcal T_f$ is a diffeomorphism of $B(0,R)\setminus \mathcal Z(f^s)$ onto itself.
\end{remark}

\subsection{Regular linear fractional and M\"obius transformations}\mbox{}

In the context of slice regularity theory, linear fractional transformations and M\"obius transformation have been studied in \cite{volumeindam,moebius}, starting from considerations on their `classical' counterparts -- without the $\ast$-product -- already studied in \cite{poincare}. We recall here some fact we need.
\begin{definition}
	A \emph{regular linear fractional transformation} is a map of the form 
	\[ 
	\mathcal F_A: q\mapsto (qc+d)^{-\ast}\ast (qa+b)
	\] 
	associated to an invertible quaternionic matrix $A=\begin{psmallmatrix}
		a & c\\ b & d
	\end{psmallmatrix}\in \tmop{GL}(2,\mathbb H)$. A quaternionic $2\times 2$ matrix $A=\begin{psmallmatrix}
	a & c\\ b & d
	\end{psmallmatrix}$ is invertible if and only if its Dieudonn\'e determinant (see \cite{poincare}) is not zero: \[\tmop{det}_\mathbb H A\assign \sqrt{|a|^2|d|^2+|b|^2|c|^2-2\tmop{Re}(b\overline a c \overline d)}\neq 0.\]
	
	The map $\mathcal F_A$ belongs to the ring of quotients of $\mathcal{SR}(\mathbb H)$ with respect to the $\ast$-product; it is slice regular
	on $\mathbb H$ if $c=0$ and on $\mathbb H\setminus \mathbb S_{dc^{-1}}$ otherwise, where $\mathbb S_{dc^{-1}}\assign\{q:\tmop{Re}q=\tmop{Re}(dc^{-1}), |\tmop{Im}q|=|\tmop{Im}(dc^{-1})|\}=\mathcal{Z}((qc+d)^s)$. By \eqref{eq:quotfg}, one has $ \mathcal F_A=F_A\circ \mathcal T$, where
	\[
	F_A(q)=(qc+d)^{-1}(qa+b),\qquad \mathcal T=\mathcal T_{q\overline c+\overline d}:q\mapsto (q\overline c+\overline d)^{-1}\cdot q \cdot (q\overline c+\overline d).
	\]
	
\end{definition}

\begin{remark}\label{rmk:red}
	It is clear that for all $\lambda\in\mathbb R\setminus\{0\}$, it is $\mathcal F_{\lambda A}=\mathcal F_A$, hence we can consider $A$ belonging to a class in the quotient group $\tmop{PSL}(2,\mathbb H)=\tmop{GL}(2,\mathbb H)/\{\lambda \mathbf I:\lambda\in\mathbb R\setminus\{0\}\}$. In particular, we may choose $\lambda=(\det_\mathbb HA)^{-1/2}$ and so we can assume $A\in \tmop{SL}(2,\mathbb H)=\{A\in\tmop{GL}(2,\mathbb H):\det_\mathbb HA=1\}$. Be aware that the reduction to $\tmop{PSL}(2,\mathbb H)$ is not enough to have a bijective correspondence between matrices and transformations: for instance, $\mathcal F_{c\mathbf I}(q)=\mathcal F_\mathbf I=\tmop{id}$ for any $c\in\mathbb H\setminus\{0\}$.
\end{remark}

\begin{definition}
	A regular linear fractional transformation $\mathcal F_A$ such that it is slice regular on the quaternionic unit ball $\mathbb B\assign B(0,1)$ and such that $\mathcal F_A(\mathbb B)\subseteq \mathbb B$ is said a \emph{(regular) M\"obius transformation}. They are characterized by the following result.
\end{definition}

\begin{proposition}[{\cite[Cor. 7.2]{moebius}}]
	Let $\mathcal F_A$ be a regular linear fractional transformation associated to  some $A\in\tmop{GL}(2,\mathbb H)$.
	The following are equivalent:
	\begin{enumeratealpha}
		\item $\mathcal F_A $ is a regular M\"obius transformation.
		\item $(\det_\mathbb HA)^{-1/2}A\in \tmop{Sp}(1,1)=\{S\in \tmop{GL}(2,\mathbb H):\leftindex^t{\overline
			{S}}\cdot \begin{psmallmatrix} 1 & 0\\ 0 & -1\end{psmallmatrix} \cdot S=\begin{psmallmatrix} 1 & 0\\ 0 & -1\end{psmallmatrix} \}$.
		\item There exists (uniquely) $u\in\partial\mathbb B$ and $p\in\mathbb B$ such that 
		\[
		\mathcal F_A(q)=(1-q\overline p)^{-\ast}\ast(q-p)u=(\overline u\ast q-p \overline u)\ast(1- \overline p\ast q)^{-\ast}
		\]
	\end{enumeratealpha}
\end{proposition}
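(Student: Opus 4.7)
My overall strategy is to prove (c) $\Rightarrow$ (a) by a direct modulus estimate, (a) $\Rightarrow$ (c) via a normal-form reduction using $\bullet$-composition, and (b) $\Leftrightarrow$ (c) by a matrix-level computation. The computational backbone is formula \eqref{eq:quotfg} together with the fact (see the remark after it) that $\mathcal T_f$ preserves both modulus and real part, which lets me transfer Euclidean estimates on $\mathbb B$ across the $\ast$-product.

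For (b) $\Leftrightarrow$ (c) I would verify that the map in (c) corresponds to the representative matrix $M_{p,u}:=\begin{psmallmatrix}u&-\bar p\\-pu&1\end{psmallmatrix}$ (reading off $a=u$, $b=-pu$, $c=-\bar p$, $d=1$ in the definition of $\mathcal F_A$), and then compute $\overline{M_{p,u}}^{\,t}\tmop{diag}(1,-1)M_{p,u}=(1-|p|^2)\tmop{diag}(1,-1)$, so that $M_{p,u}/\sqrt{1-|p|^2}\in \tmop{Sp}(1,1)$. Conversely, the defining relations $|a|^2-|b|^2=1=|d|^2-|c|^2$ and $\bar a c=\bar b d$ of $\tmop{Sp}(1,1)$ allow one to recover a pair $(p,u)\in\mathbb B\times\partial\mathbb B$ from the matrix entries (using the ambient real-scalar invariance $\mathcal F_{\lambda A}=\mathcal F_A$ for $\lambda\in\mathbb R\setminus\{0\}$), and a direct calculation then matches $\mathcal F_A$ with the right-hand side of (c). Uniqueness of $(p,u)$ follows from $p$ being the unique zero of $\mathcal F_A$ in $\mathbb B$, with $u$ then fixed by a single additional evaluation.

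For (c) $\Rightarrow$ (a), applying \eqref{eq:quotfg} with $f(q)=1-q\bar p$ and $g(q)=(q-p)u$ yields
\[
\mathcal F_A(q)=(1-\tilde q\,\bar p)^{-1}(\tilde q-p)\,u, \qquad \tilde q:=\mathcal T_{(1-q\bar p)^c}(q),
\]
so $|\mathcal F_A(q)|=|\tilde q-p|/|1-\tilde q\bar p|$. Since $|\tilde q|=|q|<1$, the elementary identity $|1-w\bar p|^2-|w-p|^2=(1-|w|^2)(1-|p|^2)$ immediately gives $|\mathcal F_A(q)|<1$ on $\mathbb B$. Slice regularity on $\mathbb B$ follows because the symmetrization $(1-q\bar p)^s(q)=1-q(p+\bar p)+q^2|p|^2$ has its zero sphere at modulus $1/|p|>1$, outside $\overline{\mathbb B}$.

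The main obstacle is (a) $\Rightarrow$ (c). My strategy is first to show that any regular linear fractional transformation $\mathcal F_A$ with $\mathcal F_A(\mathbb B)\subseteq\mathbb B$ is actually a bijection of $\mathbb B$ (by combining the non-degeneracy of $A$, the slicewise injectivity of fractional-linear maps on their domain of regularity, and the maximum modulus principle applied on each slice), so that it admits a unique zero $p\in\mathbb B$. Then $\mathcal F_A\bullet\mathcal M_{-p}$ is, via the correspondence between $\bullet$-composition and matrix multiplication in $\tmop{PSL}(2,\mathbb H)$ developed in \cite{moebius}, again a regular linear fractional self-map of $\mathbb B$ fixing the origin; forcing the representing matrix to have $b=c=0$ and appealing to the quaternionic Schwarz lemma of \cite{BSIndiana} pins this composition down as a rotation $q\mapsto qu$ with $u\in\partial\mathbb B$. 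Undoing the composition then recovers the normal form in (c). The delicate point throughout is the interplay between the noncommutative group law on $\tmop{PSL}(2,\mathbb H)$ realized via the $\bullet$-composition and the classical analytic arguments, since naive pointwise composition of slice regular maps is not itself slice regular.
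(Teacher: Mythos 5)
First, a point of reference: the paper does not prove this proposition at all --- it is quoted verbatim from \cite[Cor.~7.2]{moebius} --- so your argument can only be judged on its own terms. The parts (b)$\Leftrightarrow$(c) and (c)$\Rightarrow$(a) of your proposal are sound. I checked that with $M_{p,u}=\begin{psmallmatrix}u&-\overline p\\-pu&1\end{psmallmatrix}$ one indeed gets $\leftindex^t{\overline{M_{p,u}}}\cdot\tmop{diag}(1,-1)\cdot M_{p,u}=(1-|p|^2)\,\tmop{diag}(1,-1)$ and $\det_{\mathbb H}M_{p,u}=1-|p|^2$, so the normalization is consistent; and your modulus estimate via \eqref{eq:quotfg}, the invariance $|\tilde q|=|q|$, the identity $|1-w\overline p|^2-|w-p|^2=(1-|w|^2)(1-|p|^2)$, and the location of $\mathcal Z\bigl((1-q\overline p)^s\bigr)$ at modulus $1/|p|>1$ are all correct.

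The genuine gap is the opening step of (a)$\Rightarrow$(c). The lemma you propose --- that every regular linear fractional transformation which is slice regular on $\mathbb B$ and maps $\mathbb B$ \emph{into} $\mathbb B$ is a bijection of $\mathbb B$ --- is false. The maps $q\mapsto q/2$ (matrix $\tmop{diag}(1/2,1)$) and $q\mapsto \tfrac12+\tfrac{q}{4}$ are regular linear fractional transformations, regular on all of $\mathbb H$, injective, mapping $\mathbb B$ strictly inside $\mathbb B$, and they satisfy the maximum modulus principle on every slice; yet neither is surjective, and the second has no zero in $\mathbb B$, so the pivot point $p$ on which your whole reduction rests need not exist. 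No combination of the non-degeneracy of $A$, injectivity, and the maximum principle can exclude these examples. (A side remark: ``slicewise injectivity of fractional-linear maps'' is not an available tool either, since $\mathcal F_A$ does not preserve slices; global injectivity should instead be read off from $\mathcal F_A=F_A\circ\mathcal T$ with $F_A$ a projective transformation of $\mathbb H\mathbb P^1$ and $\mathcal T$ a diffeomorphism.) The equivalence (a)$\Leftrightarrow$(c) is only true when ``regular M\"obius transformation'' is understood, as in \cite{moebius}, to mean $\mathcal F_A(\mathbb B)=\mathbb B$; the ``$\subseteq$'' in the paper's definition is an imprecision, as its own later assertion that all regular M\"obius transformations are bijections of $\mathbb B$ confirms. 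Once surjectivity is part of the hypothesis, a zero $p\in\mathbb B$ exists, injectivity comes for free, and the remainder of your outline (transport to the origin through the matrix group law, then the $f(p)=0$ case of the Schwarz--Pick Lemma, Proposition~\ref{prop:splzero}, applied to the map and to its inverse to force equality --- noting that fixing the origin only forces $b=0$, not $c=0$, which is however enough) does go through. As written, though, the key step of (a)$\Rightarrow$(c) relies on a false statement and the implication is not established.
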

In particular, considering $u=1$ we get that for any $p\in\mathbb B$ we can associate a M\"obius transformation
\begin{equation}\label{mobius}
\mathcal M_p\assign (1-q\overline p)^{-\ast}\ast (q-p)=(q-p)\ast(1-q\overline p)^{-\ast}
\end{equation}
which is associated to an Hermitian matrix
\[
\frac{1}{\sqrt{1-|p|^2}}\begin{pmatrix}
	1 & -\overline p \\ -p & 1
\end{pmatrix}\in\tmop{Sp}(1,1).
\]
Evaluating $\mathcal M_p$ at any point $q$ in the unit ball gives $\mathcal M_p(q)=(M_p\circ \mathcal T_p)(q)$ where $M_p(q)$ is the `classical' M\"obius map $(1-q\overline p)^{-1}(q-p)$ and $\mathcal T_p(q)=(1-q p)^{-1} q (1-q p)$. Both these maps $M_p$ and $\mathcal T_p$ are bijective from $\mathbb B$ to itself, with inverse maps
\begin{align*}
M_p^{-1}=M_{-p} &:s\mapsto (1+s\overline p)^{-1}(s+p),\\
\mathcal T_p^{-1}=\mathcal T_{\overline{p}} &:s\mapsto (1-s\overline p)^{-1} s (1-s\overline p).
\end{align*}
Since right multiplication by a unimodular constant is an invertible operation too, we have proved that:
\begin{proposition}
All regular M\"obius transformations are bijective from $\mathbb B$ onto itself. 
\end{proposition}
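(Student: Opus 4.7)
The plan is to assemble the bijectivity from pieces that are, in effect, already in the paragraph preceding the statement. By the characterization in the last proposition, any regular M\"obius transformation has the form $\mathcal F_A(q) = \mathcal M_p(q)\,u$ for some $p\in\mathbb B$ and unimodular $u\in\partial\mathbb B$. Since right multiplication by the unit quaternion $u$ is a bijection of $\mathbb B$ onto itself (its inverse being right multiplication by $\overline u = u^{-1}$, which again preserves $\mathbb B$ because $|qu|=|q|$), it suffices to prove that $\mathcal M_p$ is a bijection of $\mathbb B$ onto itself.

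For this I would invoke the factorization $\mathcal M_p = M_p \circ \mathcal T_p$ recorded just above the statement. The map $\mathcal T_p : q\mapsto (1-q p)^{-1} q (1-q p)$ has been identified as a bijection of $\mathbb B$ onto itself with inverse $\mathcal T_{\overline p}$, and the classical M\"obius map $M_p : q\mapsto (1-q\overline p)^{-1}(q-p)$ has been identified as a bijection of $\mathbb B$ onto itself with inverse $M_{-p}$; both inverses are written down explicitly in the displayed equations preceding the statement. The composition of these two bijections is therefore a bijection of $\mathbb B$ onto itself, and hence $\mathcal F_A = \mathcal M_p \cdot u$ is a bijection with explicit inverse given by
\[
q \;\longmapsto\; \mathcal T_{\overline p}\bigl(M_{-p}(q\,\overline u)\bigr).
\]

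There is no genuine obstacle here: the proposition merely packages the earlier observations. The only item that requires a moment's thought is the verification that $M_p$ and $M_{-p}$ actually send $\mathbb B$ into $\mathbb B$ (not just that they are formally inverse to each other as rational expressions), which follows slicewise from the classical complex Schwarz--Pick lemma applied to the restriction of $M_p$ to a slice $\mathbb C_I$ containing $p$, and is then transferred to arbitrary slices by the representation formula.
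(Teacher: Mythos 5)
Your argument is correct and coincides with the paper's own proof, which likewise writes $\mathcal F_A = (M_p\circ\mathcal T_p)\cdot u$ and observes that $M_p$, $\mathcal T_p$, and right multiplication by $u$ are each bijections of $\mathbb B$ with the explicit inverses you quote. One small caveat on your final remark: the Representation Formula does not apply to $M_p$ (it is a pointwise, not a $\ast$-, quotient, hence not slice regular), but the inclusion $M_p(\mathbb B)\subseteq\mathbb B$ follows for every $q\in\mathbb B$ from the direct identity $|1-q\overline p|^2-|q-p|^2=(1-|q|^2)(1-|p|^2)>0$, so nothing is lost.
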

Moreover, we remark that any point outside the unit ball in which a given regular M\"obius transformation is slice regular is mapped outside the unit ball.

A relevant fact of these maps is that -- if they are identified with the multiplicative group $\tmop{Sp}(1,1)$ -- they act on the space slice regular functions mapping the unit ball to itself $\mathcal{SR(\mathbb B,\mathbb B)}=\{f\in\mathcal{SR}(\mathbb B):f(\mathbb B)\subseteq\mathbb B\}$. 

\begin{proposition}[\cite{volumeindam,moebius}]\label{prop:action_sp}
 Let $\mathcal F_A$ be a regular M\"obius transformation and let us assume that the associated matrix is $A=(1-|p|^2)^{-1/2}\begin{psmallmatrix}
 	u & -\overline p  \\ -pu & 1
 \end{psmallmatrix}\in\tmop{Sp}(1,1)$ for some $u\in\partial\mathbb B$ and $p\in\mathbb B$, i.e., $\mathcal F_A=\mathcal M_p u$. If $f\in\mathcal{SR(\mathbb B,\mathbb B)}$, then the function
 \[
 f.A\assign (1- f\overline p)^{-\ast}\ast(f-p) u
 \] belongs to $\mathcal{SR}(\mathbb B,\mathbb B)$ and the map $\tmop{Sp}(1,1)\times\mathcal{SR(\mathbb B,\mathbb B)}\lto\mathcal{SR(\mathbb B,\mathbb B)},(A,f)\mapsto f.A$ is a \emph{right action}, i.e., $(f.A).B=f.(AB)$.
 
 Alternatively, we can define a left action of $\tmop{Sp}(1,1)$ over $\mathcal{SR(\mathbb B,\mathbb B)}$ by \[
 \leftindex^tA. f = (u\ast f-pu)\ast (1-\overline p \ast f)^{-\ast}.
 \]
 The two actions coincide when $A$ is Hermitian (i.e. $\overline{A}=\leftindex^tA$), or equivalently if $u=1$.
\end{proposition}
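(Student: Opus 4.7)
The statement splits naturally into four parts: (i) $f.A\in\mathcal{SR}(\mathbb B,\mathbb B)$; (ii) the right action axiom $(f.A).B=f.(AB)$; (iii) the analogous assertions for the left action; (iv) coincidence of the two actions when $A$ is Hermitian. My plan is to verify these in order.

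For (i), the pieces $f-p$, $(f-p)u$ and $1-f\overline p$ are manifestly slice regular on $\mathbb B$ as sums and right scalar multiples of slice regular functions. The delicate point is that the $\ast$-inverse of $1-f\overline p$ must be defined on the whole of $\mathbb B$, i.e., $(1-f\overline p)^s$ must not vanish there. Since $|f|<1$ and $|p|<1$, one has the pointwise bound $|1-f(q)\overline p|\geq 1-|f(q)||p|>0$ at every $q\in\mathbb B$, and the same bound holds at every point of each sphere $\tmop{Re}(q)+|\tmop{Im}(q)|\mathbb S\subset\mathbb B$; by the standard description of the zero set of a symmetrization in terms of the sphericization of the zero set of the original function, the non-vanishing of $1-f\overline p$ on such spheres forces $(1-f\overline p)^s$ to be nowhere zero on $\mathbb B$. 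For the range inclusion, I would invoke Proposition~\ref{prop:formulastar} together with \eqref{eq:quotfg} to rewrite $(f.A)(q)$ pointwise as $M_p(f(\tilde q))\cdot u$ for some $\tilde q\in\mathbb B$ depending on $q$; since the classical Möbius map $M_p$ sends $\mathbb B$ into itself, the estimate $|(f.A)(q)|<1$ follows.

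For (ii)--(iii), one exploits the group structure of $\tmop{Sp}(1,1)$: writing $A=\mathcal M_p u$ and $B=\mathcal M_{p'} u'$, the matrix product $AB$ is again of canonical form $\mathcal M_{p''} u''$ for unique $p''\in\mathbb B$ and $u''\in\partial\mathbb B$. The identity $(f.A).B=f.(AB)$ is then checked by substituting $f$ for the universal variable $q$ in the pointwise composition law $\mathcal F_B\circ\mathcal F_A=\mathcal F_{AB}$, using associativity of the $\ast$-product and the fact that right multiplication by a unimodular constant commutes with $\ast$. The left action formula $\leftindex^tA.f=(u\ast f-pu)\ast(1-\overline p\ast f)^{-\ast}$ is treated symmetrically. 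For (iv), the matrix description shows that $A$ is Hermitian precisely when $u=1$. Under that assumption, direct expansion yields the identity $(1-f\overline p)\ast(f-p)=(f-p)\ast(1-\overline p\ast f)$, where the only non-trivial cancellation uses the centrality of $p\overline p=\overline p p=|p|^2\in\mathbb R$ with respect to $\ast$-multiplication. Multiplying on the left by $(1-f\overline p)^{-\ast}$ and on the right by $(1-\overline p\ast f)^{-\ast}$ produces the desired coincidence $f.A=\leftindex^tA.f$.

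The hardest step is (ii): since naive composition of slice regular functions need not be slice regular (cf. the Remark on $q\mapsto(qi)^2$), one cannot simply argue $f.A=\mathcal F_A\circ f$ and read off the right action axiom from the group law of $\tmop{Sp}(1,1)$. The proof has to be carried out $\ast$-algebraically, checking that the composition law of $\tmop{Sp}(1,1)$ transfers — via Proposition~\ref{prop:formulastar} and \eqref{eq:quotfg} — to the identity $(f.A).B=f.(AB)$ at the level of slice regular functions.
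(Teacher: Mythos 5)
The paper itself gives no proof of this proposition --- it is imported verbatim from \cite{volumeindam,moebius} --- so there is no internal argument to measure yours against; I can only assess your reconstruction, which is essentially sound. Parts (i) and (iv) are handled correctly: the bound $|1-f(q)\overline p|\geq 1-|f(q)|\,|p|>0$, plus the fact that $\mathcal Z(g^s)$ is the union of the spheres $x+y\mathbb S$ meeting $\mathcal Z(g)$ and that $\mathbb B$ is axially symmetric, shows $(1-f\overline p)^{-\ast}$ exists on all of $\mathbb B$; the pointwise formula $(f.A)(q)=M_p(f(\tilde q))\,u$ with $|\tilde q|=|q|$ gives the range inclusion; and the identity $(1-f\overline p)\ast(f-p)=(f-p)\ast(1-\overline p\ast f)$ indeed reduces to the centrality of $p\overline p=|p|^2\in\mathbb R$ for the $\ast$-product. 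The one place to be careful is (ii). ``Substituting $f$ for the universal variable $q$'' in the composition law is not a legitimate operation: the substitution $\sum q^m c_m\mapsto\sum f^{\ast m}\ast c_m$ is not a homomorphism for the $\ast$-product, because constants do not $\ast$-commute with $f$. You correctly retract this in your closing paragraph, but the argument that actually works should be made explicit: writing $A=\begin{psmallmatrix}a&c\\b&d\end{psmallmatrix}$ and $f.A=(f\ast c+d)^{-\ast}\ast(f\ast a+b)$, insert $b'=(f\ast c+d)^{-\ast}\ast(f\ast c+d)\ast b'$ (and likewise for $d'$) to obtain $(f.A)\ast a'+b'=(f\ast c+d)^{-\ast}\ast\bigl(f\ast(aa'+cb')+(ba'+db')\bigr)$ together with the analogous expression for the denominator; the two copies of $(f\ast c+d)^{\pm\ast}$ cancel in the middle and $(f.A).B=f.(AB)$ drops out from associativity alone, since every matrix entry acts by right multiplication on the power-series coefficients. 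This also shows that the verification must be run with general matrices in $\tmop{Sp}(1,1)$ rather than with the normalized forms $\mathcal M_p u$ (the product $AB$ of two normalized matrices need not be normalized), and that neither Proposition~\ref{prop:formulastar} nor \eqref{eq:quotfg} is needed for this step --- they enter only in the pointwise range estimate of part (i). With that repair, your plan is complete.
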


An easy consequence of what stated above is that any regular M\"obius transformation is bijective. 

We will use the following notation from now on:
\begin{definition}
If $A=(1-|p|^2)^{-1/2}\begin{psmallmatrix}
	1 & -\overline p  \\ -p & 1
\end{psmallmatrix}\in\tmop{Sp}(1,1)$ and $f$ is any function in $\mathcal{SR}(\mathbb B,\mathbb B)$, then we denote by $\mathcal M_p\bullet f$ the function
\[
\mathcal M_p\bullet f=f.A=\leftindex^tA.f=(1-f\overline p)^{-\ast}\ast(f-p)=(f-p)\ast(1-\overline p\ast f)^{-\ast}.
\]
\end{definition}
 Evaluating at a point $q\in\mathbb B$ such function gives
\[
(\mathcal M_p\bullet f)(q)=(1-f\overline p)^{-\ast}\ast(f-p)\mid_{q}=(1-f(\widetilde{\mathcal T}(q))\overline p)^{-1}(f(\widetilde{\mathcal T}(q))-p)=(M_p\circ f\circ \widetilde{\mathcal T})(q)
\]
where $\widetilde{\mathcal T}(q)=\varphi_f(q)^{-1}\cdot q\cdot \varphi_f(q)$, $\varphi_f(q)=(1-f\ast \overline p)^c (q)=1-(p\ast f^c)(q)$.

This notation is motivated by the fact that $\mathcal F_A$ coincides with $\mathcal M_p$ for $A$ given as above, and that represents, in some sense, a regular counterpart of the composition of a M\"obius map with a self-map $f$ of the unit ball. 
However, one should be careful: in general, we cannot replace $A$ with \emph{any} matrix $B$ such that $\mathcal F_B=\mathcal M_p$: we observed in Remark \ref{rmk:red} that the identity function, which coincides with $\mathcal M_0$, can be represented by any multiple $c\mathbf I$ of the identity matrix by a nonzero quaternion $c$, however it we choose $c=i$ and $f(q)=qj$, then it is easy to show that $f.\mathbf I(q)=f(q)=qj$ while $f.(i\mathbf I)(q)=i\ast qk=-qj$. Since for $p=0$ the matrix $A=\mathbf I$, for us it will be clear that $\mathcal M_0\bullet f=f$. 
\subsection{Regular Blaschke products}\mbox{}

We now introduce the analogous notion of a complex Blaschke product in the framework of quaternionic slice regularity, which already appeared in \cite{AlpayColomboSabadini13,AltavillaBisi19, weierstrass}.

\begin{definition}
	A \emph{(slice) regular (finite) Blaschke product} is a function $f$ over $\mathbb B$ which can be written in the form
	\begin{equation}\label{eq:blaschke}
		f=\mathcal M_{s_1}\ast \mathcal M_{s_2}\ast\ldots\ast\mathcal M_{s_\kappa}u,\qquad (\kappa\geq 1)
	\end{equation} where $u\in\partial \mathbb B$ is a unimodular constant and $s_1,\ldots,s_\kappa\in\mathbb B$. 
\end{definition}

	If $f$ is of this form, it clearly maps the unit ball to itself, it is slice regular, it extends continuously to the unit sphere $\partial \mathbb B$ which is $f$-invariant. 
	
	The zero set of this function, and in general of any slice regular function, can be described as a combination of (eventually repeated) $2$-spheres $\mathbb S_{p_1},\ldots,\mathbb{S}_{p_m}$ where $\mathbb S_{p_\ell}=\{q:\tmop{Re}q =\tmop{Re}(p_\ell), |q|=|p_\ell|\}$ and points $q_1,\ldots,q_n$, which could be not distinct or not disjoint from the spheres, such that the total multiplicity is $n+2m=\kappa$.  For these definitions and the general description of the zero sets of slice regular functions we refer to Chapter 3 of \cite{librospringer2}.
	
	This last remark says that, although the expression \eqref{eq:blaschke} is not unique, the number $\kappa$ is uniquely determined and it is called the \emph{degree} of the Blaschke product $f$. Blaschke products of degree $1$, $f=\mathcal M_{q_1} u$, are exactly the regular M\"obius transformations.

Now, the properties listed above give a complete characterization of slice regular Blaschke products.

\begin{lemma}
	\label{blaschke-c}Let $f : \mathbb{B} \lto \mathbb{B}$ be a
	slice regular function such that
	\begin{enumeratealpha}
		\item \label{blaschke-a}it extends continuously up to $\partial
		\mathbb{B}$ and $| f | \equiv 1$ on $\partial \mathbb{B}$;
		
		\item its set of zeros is given by a finite number of points $q_1, \ldots, q_n$ and
		spheres $\mathbb{S}_{p_1}, \ldots, \mathbb{S}_{p_m}$ (counted with their respective
		multiplicity), with $\kappa = n + 2 m$.
	\end{enumeratealpha}
	Then $f = B u$, where $B$ is a regular Blaschke product of degree $\kappa$ with the
	same zero set and $u$ is a unimodular quaternionic constant.
\end{lemma}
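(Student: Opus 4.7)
The plan is to exhibit a regular Blaschke product $B$ with exactly the same zero set as $f$ and then to show that the left regular quotient $g\assign B^{-\ast}\ast f$ is a unimodular constant. First, using the description of zero sets of slice regular functions (Ch.~3 of \cite{librospringer2}), I would assemble $B$ as the $\ast$-product of $n$ M\"obius factors $\mathcal M_{q_i}$, one for each isolated zero $q_i$, together with $m$ ``spherical'' factors of the form $\mathcal M_{p_\ell}\ast\mathcal M_{\overline{p_\ell}}$, one for each spherical zero $\mathbb S_{p_\ell}$ (counted with its multiplicity). This $B$ is a regular Blaschke product of degree $\kappa=n+2m$ whose zero set matches that of $f$; being a Blaschke product, it extends continuously to $\partial\mathbb B$ with $|B|\equiv 1$ there.

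A priori $g$ is slice regular only on $\mathbb B\setminus\mathcal Z(B^s)$, but the matching of zero sets (with multiplicities) lets me invoke the quaternionic Weierstrass-type factorization (see \cite{weierstrass} and \cite[Ch.~5]{librospringer2}) to write $f=B\ast g$ with $g\in\mathcal{SR}(\mathbb B)$ nowhere vanishing on $\mathbb B$; consequently $g^{-\ast}$ is also slice regular on $\mathbb B$. Applying Proposition~\ref{prop:formulastar} to $f=B\ast g$ yields
\[
 |f(q)|=|B(q)|\cdot |g(\mathcal T_B(q))|
\]
at each $q$ with $B(q)\neq 0$, where $\mathcal T_B(q)=B(q)^{-1}qB(q)$ preserves $|q|$ and $\operatorname{Re}q$ and so restricts to a bijection of $\partial\mathbb B$. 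Since $|f|\equiv|B|\equiv 1$ on $\partial\mathbb B$, this forces $|g|\equiv 1$ on $\partial\mathbb B$. By the slice regular maximum modulus principle applied both to $g$ and to $g^{-\ast}$, each is bounded by $1$ throughout $\mathbb B$; the identity $g\ast g^{-\ast}=1$ combined with the modulus preservation of $\mathcal T_g$ then forces $|g|\equiv 1$ on all of $\mathbb B$. A final invocation of the maximum modulus principle yields $g\equiv u$ for some $u\in\partial\mathbb B$, and hence $f=B\ast u=Bu$.

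The step I expect to be delicate is justifying the extension of $g=B^{-\ast}\ast f$ to a slice regular function on all of $\mathbb B$: the cancellation between the $\ast$-poles of $B^{-\ast}$ and the zeros of $f$ is more subtle than in the commutative holomorphic setting, because the $\ast$-product interacts with zero multiplicities through the twist $\mathcal T_{B^c}$ and the distinction between isolated and spherical zeros. Once this is secured via the quaternionic Weierstrass decomposition, the boundary modulus computation and the max/min modulus argument run in parallel to the classical complex proof.
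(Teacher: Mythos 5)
Your proposal is correct and follows essentially the same route as the paper: build a Blaschke product $B$ with the matching zero set, show that the regular quotient of $f$ by $B$ and its $\ast$-inverse are both slice regular on $\mathbb B$, continuous up to $\partial\mathbb B$ and unimodular there, and conclude by the Maximum Modulus Principle. The only (harmless) variation is in the last step: the paper evaluates $B^{-\ast}\ast f$ and $f^{-\ast}\ast B$ at two twisted preimages $T_B^{-1}(q_0)$, $T_f^{-1}(q_0)$ of a single interior point to produce the values $u$ and $u^{-1}$ directly, whereas you derive $|g|\equiv 1$ throughout $\mathbb B$ from $g\ast g^{-\ast}=1$ together with the two one-sided bounds --- both arguments rest on the same evaluation formula for the $\ast$-product and the modulus-preserving twist maps.
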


\begin{proof} 
	Let $B$ be a Blaschke product with zero set given by the points $q_1,\ldots, q_n$ and the spheres $\mathbb{S}_{p_1}, \ldots, \mathbb{S}_{p_m}$ with correspondent multiplicity. Let $f$ be as in the assumption. Then by a factorization argument one shows that both $f^{- \ast} \ast B$ and $B^{-\ast} \ast f$ are regular functions over $\mathbb{B}$, with continuous extension to the boundary. Moreover $| f^{- \ast} \ast B | = | B^{- \ast}\ast f |= 1$ on the whole $\partial \mathbb{B}$. Hence by the Maximum Modulus Principle {\cite[Thm 7.1]{librospringer2}} it is
	\[ 
	| B^{- \ast} \ast f | \leq 1, \qquad | f^{- \ast} \ast B | \leq 1 \qquad\text{on } \mathbb{B}. \]
	Let  $T_f (q) = f^c (q)^{- 1} q f^c (q)$ and $T_B (q)$ the same with $f$ replaced by $B$. Let $q_0$ be a point in the unit ball which is not a zero
	of $f^s$ (and thus of $B^s$), i.e., $q_0 \in \mathbb{B} \setminus \left[
	\left( \bigcup_{h = 1}^n \mathbb{S}_{q_h} \right) \cup \left( \bigcup_{\ell =
		1}^m \mathbb{S}_{p_\ell} \right) \right]$. \footnote{If $r\in\mathbb R$, then the 2-sphere $\mathbb S_r=\{r\}$ collapses to a single real point.} In this set the maps above are
	real diffeomorphisms (hence invertible). So we can define $\widetilde{q_0} =
	T_B^{- 1} (q_0)$, $\widehat{q_0} = T_f^{- 1} (q_0)$ and we have:
	\[ (B^{- \ast} \ast f) (\widetilde{q_0}) = B (q_0)^{- 1} f (q_0) \backassign
	u \qquad \Longrightarrow \qquad | u | \leq 1 ; \]
	\[ (f^{- \ast} \ast B) (\widehat{q_0}) = f (q_0)^{- 1} B (q_0) = u^{-1} \qquad
	\Longrightarrow \qquad | u | \geq 1. \]
	Therefore there exists a point $q_0 \in \mathbb{B}$ such that $B^{- \ast}
	\ast f$ evaluated at $q_0$ gives $u$, which has modulus one, hence by the
	Maximum Modulus Principle it is $B^{- \ast} \ast f \equiv u$, that is to say
	$f = B u$.
\end{proof}

\begin{proposition}\label{prop:blaschke_deg}
	Let $B$ be a regular Blaschke product of degree $\kappa$. If $\mathcal{M}_{q_0}$ is the
	regular M\"obius transformation associated to a point $q_0\in\mathbb B$, then $\mathcal{M}_{q_0} \bullet B$ is again a
	regular Blaschke product with the same degree. 
	
	Conversely, if $f\in\mathcal{SR}(\mathbb B)$ is such that $\mathcal{M}_{q_0} \bullet f$ is a Blaschke product of degree $\kappa$, then $f$ is a Blaschke product of degree $\kappa$.
\end{proposition}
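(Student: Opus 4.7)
My approach is to apply Lemma \ref{blaschke-c} and then match degrees. First, by Proposition \ref{prop:action_sp}, $\mathcal M_{q_0}\bullet B$ lies in $\mathcal{SR}(\mathbb B,\mathbb B)$. Using the evaluation formula $(\mathcal M_{q_0}\bullet B)(q)=(M_{q_0}\circ B\circ \widetilde{\mathcal T})(q)$ recorded at the end of the M\"obius subsection, together with the continuous extension of $B$ to $\partial\mathbb B$ with $|B|\equiv 1$, the fact that the classical map $M_{q_0}$ sends $\partial\mathbb B$ to $\partial\mathbb B$, and the fact that $\widetilde{\mathcal T}$ preserves modulus and extends as a homeomorphism of $\overline{\mathbb B}$, I verify hypothesis (a) of Lemma \ref{blaschke-c}: continuous unimodular boundary values on $\partial\mathbb B$.

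Next I address hypothesis (b). On $\partial\mathbb B$ one has $|B-q_0|\ge 1-|q_0|>0$, so $B-q_0$ has no boundary zeros; its zero set in $\mathbb B$ is therefore compactly contained, and by the structure theorem for zero sets of slice regular functions (\cite[Ch.~3]{librospringer2}) it consists of finitely many isolated points and isolated real-centered spheres with some total multiplicity $\kappa'$. Since $\widetilde{\mathcal T}$ preserves real parts and moduli, it sends spheres to spheres and points to points while preserving multiplicities, so the zero set of $\mathcal M_{q_0}\bullet B$ has the same structure and total multiplicity $\kappa'$. Lemma \ref{blaschke-c} then yields $\mathcal M_{q_0}\bullet B=B'u'$ with $B'$ a regular Blaschke product of degree $\kappa'$ and $u'\in\partial\mathbb B$.

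The main obstacle is then to show $\kappa'=\kappa$. This is the quaternionic analogue of the classical fact that a finite Blaschke product of degree $\kappa$ attains each value in $\mathbb B$ with total zero multiplicity exactly $\kappa$. I would establish $\kappa'\le\kappa$ by induction on $\kappa$, using the factorization $B=\mathcal M_{s_1}\ast\cdots\ast\mathcal M_{s_\kappa}u$ and the pointwise evaluation formula for the $\ast$-product (Proposition \ref{prop:formulastar}) to control how zeros of $B-q_0$ are produced; the delicate point is the interaction between point zeros and spherical zeros, which must be accounted for uniformly in the multiplicity count. The reverse inequality $\kappa\le\kappa'$ then follows by applying the same counting bound to the Blaschke product $B'u'$ (of degree $\kappa'$) relative to the value $-q_0$, invoking the group identity $\mathcal M_{-q_0}\bullet(\mathcal M_{q_0}\bullet B)=B$, which holds by Proposition \ref{prop:action_sp} because the associated matrices in $\tmop{Sp}(1,1)$ are mutually inverse (a short direct check).

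The converse statement is then immediate: if $h=\mathcal M_{q_0}\bullet f$ is a regular Blaschke product of degree $\kappa$, the same group property of the $\bullet$-action gives $f=\mathcal M_{-q_0}\bullet h$, and the first part applied to $h$ with $-q_0$ in place of $q_0$ identifies $f$ as a regular Blaschke product of the same degree $\kappa$.
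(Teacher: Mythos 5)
Your overall architecture (verify the hypotheses of Lemma \ref{blaschke-c} for $\mathcal M_{q_0}\bullet B$, identify its zero set with the fiber $B^{-1}(q_0)$, then pin down the total multiplicity) is exactly the paper's, and your boundary-behaviour argument and your treatment of the converse via the group identity $\mathcal M_{-q_0}\bullet(\mathcal M_{q_0}\bullet f)=f$ match the paper. The problem is that you have not actually proved the one step you yourself call ``the main obstacle,'' namely that the fiber $B^{-1}(q_0)$ has total multiplicity exactly $\kappa$. You only announce that you ``would establish $\kappa'\le\kappa$ by induction on $\kappa$, using the factorization and the pointwise evaluation formula,'' and you explicitly flag the interaction between point zeros and spherical zeros as a delicate unresolved point. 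That is a plan, not an argument, and it is not clear it goes through as stated: the pointwise evaluation formula $(f\ast g)(q)=f(q)\,g(f(q)^{-1}qf(q))$ does not by itself control multiplicities of the zeros of $B-q_0$, because $q_0$ does not distribute over the $\ast$-factors of $B$ and the inner rotations $\mathcal T_f$ scramble the factorization.

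The paper closes this gap by a concrete algebraic reduction rather than induction: writing $B=\mathcal M_{s_1}\ast\cdots\ast\mathcal M_{s_\kappa}$, it uses the commutation identity
\[
(1-q\alpha)^{-\ast}\ast(q-\beta)=(q-\beta\gamma)\ast(\gamma-q\alpha)^{-\ast},\qquad \gamma=(1-\alpha\beta)^{-1}(1-\beta\alpha)\in\partial\mathbb B,
\]
to shift all the $\ast$-denominators to the outer ends of the product, then clears them to convert $B(q)=q_0$ into a regular polynomial equation $P_\kappa(q)=0$ of degree exactly $\kappa$ (the leading coefficient, e.g.\ $1-\overline{s_1}q_0\overline{s_2}$ when $\kappa=2$, is nonzero because all moduli are less than one), whose root set has total multiplicity $\kappa$ by the structure theory of zeros of regular polynomials. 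This single computation gives $\kappa'=\kappa$ directly, making your two-sided inequality scheme unnecessary. Without this identity (or an equivalent device for handling the interior $\ast$-denominators), your proposal does not constitute a proof of the degree statement.
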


\begin{proof} 
	We have that $\mathcal{M}_{q_0} \bullet B (q)
	= (B (q) - q_0) \ast (1 - \overline{q_0} \ast B (q))^{- \ast}$ is a slice
	regular function which clearly extends continuously to the boundary.
	Moreover it is not difficult to check that $| (\mathcal{M}_{q_0} \bullet B) (q) |
	\rightarrow 1$ as $q$ goes to any point of $\partial \mathbb{B}$. In order
	to conclude it remains to consider its zero set. So we need to consider the
	fiber $B^{- 1} (q_0)$.
	
	Let us assume $B =\mathcal{M}_{s_1} \ast \cdots \ast \mathcal{M}_{s_\kappa}$ (multiplying for a unimodular constant will not change the argument). The equation $B (q)
	= q_0$, where $q_0 \in \mathbb{B}$, will admit only solutions of modulus less than one, as otherwise there would be a M\"obius factor whose value in a point outside $\mathbb B$ is of norm less than one, and that is absurd.
	
	If $\kappa = 1$, it is clear that there is a unique solution of $B (q) = q_0$:
	\[ B(q)=q_0 \iff q-s_1 = (1- q\overline{s_1})q_0 \iff q = (s_1 + q_0) (1+\overline{s_1} q_0)^{-1}. \]
	If $\kappa = 2$, $B (q) = q_0$ can be rewritten as
	\[
	(1 - q \overline{s_1})^{-\ast}\ast(q - s_1) \ast (q - s_2)\ast (1 - q
	\overline{s_2})^{-\ast}= q_0,
	\]
	hence by left $\ast$-multiplying by $(1-q\overline{s_1})$ and right $\ast$-multiplying by $(1-q\overline{sp_2})$, then
	\[ (q - s_1) \ast (q - s_2) = (1 - q \overline{s_1}) q_0 \ast (1 - q
	\overline{s_2}),\]
	thus we get the regular polynomial equation
	\[ q^2 (1 - \overline{s_1} q_0  \overline{s_2}) + q (\overline{s_1} q_0 +
	q_0  \overline{s_2} - s_1 - s_2) + s_1 s_2 - q_0 = 0. \]
	The leading coefficient is not zero, so we reduce ourselves to consider the
	zero set of a slice regular polynomial of degree $2$, which has total
	multiplicity $2$.
	
	Now, when $\kappa \geq 3$, we need to be more careful. Indeed now some of the `star-denominators' are not the left or the right $\ast$-factor in the expression of $B(q)$, so we need to find a way to shift them. With this purpose, we observe that
	\[ (1 - q \alpha)^{- \ast} \ast (q - \beta ) = (q - \beta \gamma) \ast
	(\gamma - q \alpha)^{- \ast} \]
	where $\gamma = (1 - \alpha \beta)^{- 1}  (1 - \beta \alpha)\in\partial\mathbb B$. Indeed:
	\[ (1 - q \alpha) \ast (q - \beta \gamma) = q^2  (- \alpha) + q (1 + \alpha
	\beta \gamma) - \beta \gamma, \]
	\[ (q - \beta) \ast (\gamma - q \alpha) = q^2 (- \alpha) + q (\gamma + \beta
	\alpha) - \beta \gamma, \]
	\[ 1 + \alpha \beta \gamma = \gamma + \beta \alpha \Leftrightarrow (1 -
	\alpha \beta) \gamma = 1 - \beta \alpha . \]
	Let us remark that $| \gamma | = 1$. Using this trick and then proceeding as in the $\kappa=2$ case, one gets that the equation $B
	(q) = q_0$ is equivalent to $P_\kappa (q) = 0$, where $P_\kappa$ is a slice regular
	polynomial of degree $\kappa$, hence the set of its roots has total
	multiplicity $\kappa=n+2m$, where $m$ is the number of spherical zeros and $n$ the number of non-spherical zeros (counted with their respective multiplicity).
	
	For the converse part, it is enough to observe that if $\mathcal M_{q_0} \bullet f$ is a Blaschke product, then we can take $\mathcal M_{-q_0}$: as the associated matrices in $\tmop{Sp}(1,1)$ are such that their product is the identity matrix $\mathbb{I}$, then $\mathcal M_{-q_0}\bullet(\mathcal M_{q_0}\bullet f)=f$. Thus $f$ is a Blaschke product too.
\end{proof}


In the particular case $\kappa = 2$, we have also:

\begin{proposition}
	A finite Blaschke product $B$ is of degree two if and only if it exists a
	M\"obius transformation $\mathcal{M}_p$ and a point $q_c \in \mathbb{B}$
	such that $(\mathcal{M}_p \bullet B) u =\mathcal{M}_{q_c}^{\ast 2}
	=\mathcal{M}_{q_c} \ast \mathcal{M}_{q_c}$ for some $u \in \partial
	\mathbb{B}$.
\end{proposition}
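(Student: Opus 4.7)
The claim is an equivalence; I treat the two directions separately.

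\emph{Sufficiency.} Suppose $(\mathcal{M}_p \bullet B) u = \mathcal{M}_{q_c}^{\ast 2}$. The right-hand side is by construction a regular Blaschke product of degree two, and right multiplication by the unimodular constant $u^{-1}$ preserves both the Blaschke structure and the degree, so $\mathcal{M}_p \bullet B$ is itself a Blaschke product of degree two. The converse part of Proposition \ref{prop:blaschke_deg} then forces $B$ to be a Blaschke product of degree two.

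\emph{Necessity.} Assume $B$ is a Blaschke product of degree two. The plan is to find $p \in \mathbb{B}$ so that the zero set of $\mathcal{M}_p \bullet B$ reduces to a single quaternion $q_c \in \mathbb{B}$ with total multiplicity two. Once this is achieved, $\mathcal{M}_p \bullet B$ sends $\mathbb{B}$ into $\mathbb{B}$, extends continuously to $\partial\mathbb{B}$ with modulus one there (as both $B$ and $\mathcal{M}_p$ do), and has the prescribed zero set; Lemma \ref{blaschke-c} then identifies it with $\mathcal{M}_{q_c}^{\ast 2} u^{-1}$ for some $u \in \partial\mathbb{B}$, which is the desired equality after multiplying by $u$.

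To locate $p$ and $q_c$, I would mimic the classical critical-value construction from complex function theory. Writing $B = \mathcal{M}_{s_1} \ast \mathcal{M}_{s_2} v$ and following the computation in the proof of Proposition \ref{prop:blaschke_deg}, the equation $B(q) = p$ becomes a slice regular polynomial equation $P_2(q) = 0$ of degree two whose coefficients depend polynomially on $s_1, s_2, v, p$ and whose leading coefficient is nonzero for $p \in \mathbb{B}$. By Proposition \ref{prop:formulastar} the zero set of $\mathcal{M}_p \bullet B$ is the image of the zero set of $P_2$ under the diffeomorphism associated to $1 - B \ast \overline{p}$, which preserves both total multiplicity and zero-type (isolated or spherical). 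So the task reduces to choosing $p$ for which $P_2$ has exactly one quaternionic zero, of multiplicity two; imposing this as a vanishing-discriminant condition on the symmetrization $P_2^s$ (a degree-four polynomial with real coefficients) yields a system of equations in $p$, solvable quadratically in the generic case.

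\emph{Expected main obstacle.} The crux of the argument is to show that this discriminant condition is always solvable with $p \in \mathbb{B}$, ensuring $\mathcal{M}_p$ is a genuine regular M\"obius transformation. Dimensional heuristics are favourable: the canonical family $\mathcal{M}_{q_c}^{\ast 2} u^{-1}$ depends on $7$ real parameters while degree-two Blaschke products form an $11$-dimensional family, and the $4$ real parameters of $p \in \mathbb{B}$ make up the difference. In the generic case of two isolated zeros of $B$, the discriminant condition reduces to a quadratic for $p$ whose relevant solution mirrors the unique critical point inside $\mathbb{D}$ of a complex degree-two Blaschke product. The case needing most care is when $B$ has a spherical zero, since then the zero set of $P_2$ is itself a two-sphere at $p=0$, and one must check that a suitable nonzero $p$ still collapses the preimage of $B$ onto a single point of multiplicity two inside $\mathbb{B}$.
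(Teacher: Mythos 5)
Your sufficiency direction is correct and is exactly the paper's argument: $\mathcal{M}_{q_c}^{\ast 2}u^{-1}$ is a degree-two Blaschke product, and the converse part of Proposition \ref{prop:blaschke_deg} transfers this back to $B$.

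The necessity direction, however, has a genuine gap, and it is precisely the one you flag yourself as the ``expected main obstacle'': you never prove that a point $p\in\mathbb{B}$ exists for which the fibre $B^{-1}(p)$ collapses to a single quaternion of multiplicity two. The dimension count and the phrase ``solvable quadratically in the generic case'' are heuristics, not arguments; moreover a vanishing-discriminant condition on the real quartic $P_2^s$ is a single real equation in the four real parameters of $p$, so even if solvable it would not by itself single out the correct $p$ nor guarantee that the double root is a non-spherical point lying in $\mathbb{B}$. The paper closes exactly this hole by a different mechanism: it computes $\partial_C B$ explicitly for $B=\mathcal{M}_{q_1}\ast\mathcal{M}_{q_2}$ and observes that the relevant factor is a quadratic regular polynomial $q^2a_2+2qa_1+a_0$ with $a_1\in\mathbb{R}$ and $a_2=\overline{a_0}$, so all coefficients lie in one slice $\mathbb{C}_I$. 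Its roots therefore lie in $\mathbb{C}_I$ and are critical points of the restriction $B\mid_{\mathbb{C}_I}$, which is a \emph{complex} degree-two Blaschke product; classical one-variable theory then yields a unique critical point $q_c$ inside the disk. Setting $p=B(q_c)$, the Leibniz rule for $\partial_C$ shows that $\mathcal{M}_{p}\bullet B$ and its Cullen derivative both vanish at $q_c$, so its zero set is $\{q_c\}$ with multiplicity two, and Lemma \ref{blaschke-c} (or the degree count) gives $\mathcal{M}_{p}\bullet B=\mathcal{M}_{q_c}^{\ast 2}u^{-1}$. If you want to salvage your discriminant route you would need to supply an existence proof of comparable strength, and the natural way to do so is exactly this slice reduction; as written, the proof is incomplete.
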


\begin{proof} 
	The if part easily follows by the previous result.
	
	If we assume that $B =\mathcal{M}_{q_1} \ast \mathcal{M}_{q_2}$, without losing
	generality, then
	\begin{equation}
		\partial_C B (q) = (1 - q \overline{q_1})^{- \ast 2} \ast [(1 - | q_1 |^2)
		((q - q_2) \ast (1 - q \overline{q_2})) + (1 - | q_2 |^2) ((1 - q
		\overline{q_1}) \ast (q - q_1))] \ast (1 - q \overline{q_2})^{- \ast 2}
		\label{crit}
	\end{equation}
	The term within the square brackets in \eqref{crit} writes as a regular polynomial $q^2 a_2 + 2 q a_1 + a_0$, where
	\begin{eqnarray*}
		a_0 & = & - (1 - | q_1 |^2) q_2 - (1 - | q_2 |^2) q_1\\
		a_1 & = & 1 - | q_1 |^2  | q_2 |^2 \in \mathbb{R},\\
		a_2 & = & \overline{a_0} = - (1 - | q_1 |^2)  \overline{q_2} - (1 - | q_2
		|^2)  \overline{q_1} .
	\end{eqnarray*}
	When $q_1=-q_2$, this polynomial becomes $2 q (1-|q_1|^4)$ which has a unique root $q=0$. Otherwise, we have a polynomial of degree two for which $q = 0$ is not a root. A priori, we get that the set of zeros is non-empty in $\mathbb{H}$ (in particular, of total multiplicity two). Now it is clear that the coefficients above lie all in the same slice $\mathbb C_I$ for some $I\in\mathbb S$, so we may solve the equation as in the complex case and the roots will lie on the same slice $\mathbb C_I$ as the coefficients. These roots are also critical points of the restriction $B\mid_{\mathbb C_I}$ which is a complex Blaschke product by identifying $\mathbb{C}$ with $\mathbb C_I$. We know that in the complex case a Blaschke product of degree two admits a unique critical point inside the unit disk $\mathbb{D} \cong \mathbb{B} \cap \mathbb C_I$. We conclude that there exists $q_c \in \mathbb{B}$ such that $(\partial_C B) (q_c) = 0$. Let $p_c = B(q_c)$. Now by choosing $\mathcal{M}=\mathcal{M}_{p_c}$, then $\mathcal{M}_{p_c}\bullet B$ is a Blaschke product of degree two which vanishes at $q_c$. Now
	\begin{eqnarray*}
		\mathcal{M}_{p_c} \bullet B & = & (B - p_c) \ast (1 - \overline{p_c} \ast B)^{- \ast},\\
		\partial_C  (\mathcal{M}_{p_c} \bullet B) & = & \partial_C B \ast (1 - \overline{p_c}
		\ast B)^{- \ast} + (B - p_c) \ast \partial_C  ((1 - \overline{p_c} \ast B)^{-
			\ast}),
	\end{eqnarray*}
	hence $(\partial_C  (\mathcal{M}_{p_c} \bullet B)) (q_c) = 0$ as $B (q_c) - p_c = 0$ and $(\partial_C B) (q_c) = 0$. We get that the zero set of $\mathcal{M}_{p_c}\bullet B$ is given by $q_c$ counted twice, so it coincides with $\mathcal{M}_{q_c}^{\ast 2}$ up to right multiplication by a unimodular constant.
\end{proof}

We conclude this section by recalling that -- although we will not consider them in the following -- it is also possible to define regular \emph{infinite} Blaschke products. We refer to  \cite{AlpayColomboSabadini13,weierstrass} for their definition.

\section{The hyperbolic difference quotient and the three-points Schwarz--Pick Lemma}\label{sec:spl}
We start by recalling how to define a difference quotient in the context of the theory of slice regularity.
\begin{definition} [{\cite[Rmk. 1.6]{BSIndiana}}]
  Let $f: \mathbb{B} \lto \mathbb{B}$ be a slice regular
  function. Fix a point $p \in \mathbb{B}$. Then the {\emph{regular difference
  quotient}} of $f$ at $p$ is the function
  \[ \mathcal{R}_{f, p} (q) \assign (q - p)^{- \ast} \ast (f (q) - f (p)) . \]
  At a first look, since the term $(q - p)^{- \ast}$ is well defined only for $q \in \mathbb{B}$
  not in the sphere $\mathbb{S}_p = \{ q : q^2 - q \cdot 2 \tmop{Re} p + | p
  |^2 =0 \}$, the domain of $\mathcal{R}_{f,p}$ is only $\mathbb B\setminus \mathbb{S}_p$.  
  But the singularity at $\mathbb S_p$ is removable: indeed, by applying \cite[Prop. 3.18]{librospringer2} to
 $f-f(p)$, there exists a unique slice regular function $g:\mathbb B\lto \mathbb B$ for which
 $f(q)=f(p)+(q-p)\ast g(q)$, and it is clear that $g$ coincides with $\mathcal{R}_{f,p}$ in $\mathbb H\setminus \mathbb{S}_p$.
  The value in $p$ of this function
  coincides with the regular derivative $
  \partial_C f (p)$, whereas its value in $\overline{p}$ coincides by
  definition with the spherical derivative in $p$: $\partial_S f (p)\assign(2 \tmop{Im} p)^{- 1}  (f (p) - f (\overline{p}))$. \footnote{When $p$ is real, we will assume that the spherical derivative coincides with the Cullen derivative.}
\end{definition}

\begin{definition}
  Let $f : \mathbb{B} \lto \mathbb{B}$ be a slice regular
  function. Fix a point $p \in \mathbb{B}$. We define the \emph{hyperbolic difference quotient} at $p$
  \[ f^{\star}_p (q)\assign (1 - q \overline{p}) \ast
     \mathcal{R}_{f, p} \ast (1 - \overline{f (p)} \ast f (q))^{- \ast} . \]
  Then $f_p^{\star} : \mathbb{B} \lto \mathbb{H}$ is a slice regular
  function as a  $\ast$-product of regular functions: the last term is not singular since $f$ has values in the unit ball.
 In particular, its value in $q=p$ is called the \emph{hyperbolic derivative} of $f$ in $p$ and it is denoted by $f^h(p)$. By evaluating the $\ast$-products, it is related to the regular derivative by the formula
 \begin{equation}\label{eq:fhcullen}
 	f^h(p)= f_p^{\star} (p) = (1 - | p |^2) \partial_C f (p)  (1 - \overline{f (p)} f
 	(p_0))^{- 1},\quad\text{for some }p_0\in\mathbb S_p
 \end{equation}
 where $p_0\in\mathbb S_p$ depends on $p$ and on the function $f$ itself.
  Note that $f_p^{\star}$ can also be written as 
  \begin{equation*}
  {f_p^{\star} =\mathcal{M}_p^{-
  \ast} \ast (\mathcal{M}_{f (p)}  \bullet f)}
  \end{equation*}
  where $\mathcal{M}_p$ is the
  regular M\"obius transformation \eqref{mobius}
  and by $\mathcal{M}_{f(p)} \bullet f$ we mean the (right) action on $f$ of the matrix $\frac{1}{\sqrt{1-|f(p)|^2}}\begin{psmallmatrix}
  	1 & -\overline{f(p)} \\ -f(p) & 1
  \end{psmallmatrix}$ given by Proposition~\ref{prop:action_sp}, namely
  
\begin{equation}\label{action}
\mathcal{M}_{f(p)} \bullet f = (f - f(p)) \ast ( 1 -\overline{f(p)} \ast f )^{- \ast} = ( 1 - f \cdot \overline{f(p)} )^{- \ast} \ast (f - f(p)) . 
\end{equation}
\end{definition}
\subsection{\texorpdfstring{\bf The quaternionic Schwarz--Pick Lemma}{The quaternionic Schwarz--Pick Lemma}}\mbox{}

The following results have been obtained by Bisi and Stoppato in \cite{BSIndiana}.
\begin{proposition}[{\cite[Thm. 3.2]{BSIndiana}}]\label{prop:splzero}
$f:\mathbb B\lto \mathbb B$ be a slice regular function. Let $p\in\mathbb B$ such that $f(p)=0$. Then for all $q\in\mathbb B$ it is \begin{equation}\label{eq:spl_zero} |(\mathcal M_p^{-\ast}\ast f)(q)|\leq 1.\end{equation} The inequality is strict unless $f=\mathcal M_p u$ for some $u\in\partial\mathbb B$, i.e., $f$ is a M\"obius transformation vanishing at $p$.

\end{proposition}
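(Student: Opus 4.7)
The plan mirrors the classical Schwarz lemma proof: set $g:=\mathcal{M}_p^{-\ast}\ast f$, show that $g$ extends to a slice regular function on $\mathbb{B}$, and then apply the Maximum Modulus Principle together with the fact that $|\mathcal{M}_p|$ tends to $1$ near $\partial\mathbb{B}$ to conclude $|g|\leq 1$.

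First I would extend $g$ regularly to all of $\mathbb{B}$. A priori it is defined only on $\mathbb{B}\setminus\mathbb{S}_p$, since $\mathcal{M}_p^s$ vanishes on $\mathbb{S}_p$. Invoking the factorization theorem \cite[Prop. 3.18]{librospringer2} at the zero $p$ of $f$, we write $f=(q-p)\ast h$ for some $h\in\mathcal{SR}(\mathbb{B})$. Since $\mathcal{M}_p=(1-q\overline{p})^{-\ast}\ast(q-p)$, one has $\mathcal{M}_p^{-\ast}=(q-p)^{-\ast}\ast(1-q\overline{p})$; moreover a direct check of $\ast$-product coefficients shows that $(1-q\overline{p})\ast(q-p)=(q-p)\ast(1-q\overline{p})$. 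Combining these,
\[
g=(q-p)^{-\ast}\ast(q-p)\ast(1-q\overline{p})\ast h=(1-q\overline{p})\ast h
\]
on $\mathbb{B}\setminus\mathbb{S}_p$, and the right-hand side is slice regular on all of $\mathbb{B}$; hence $g$ extends regularly across $\mathbb{S}_p$.

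Next I would bound $|g|$ on spheres. By formula \eqref{eq:quotfg}, for $q\notin\mathbb{S}_p$,
\[
g(q)=\mathcal{M}_p(\widetilde q)^{-1}f(\widetilde q),\qquad \widetilde q=\mathcal{M}_p^c(q)^{-1}\,q\,\mathcal{M}_p^c(q),\quad |\widetilde q|=|q|.
\]
For $|p|<r<1$, set $M_f(r):=\max_{|w|=r}|f(w)|\leq 1$ and $m_p(r):=\min_{|w|=r}|\mathcal{M}_p(w)|$; then $|g(q)|\leq M_f(r)/m_p(r)$ on the sphere $\{|q|=r\}$. Using $|\mathcal{M}_p(q)|=|M_p(\mathcal{T}_p(q))|$ together with the fact that $\mathcal{T}_p$ preserves the modulus of each point (and hence maps each sphere $\{|w|=r\}$ to itself), a short explicit computation on the complex-valued expression for $|M_p|$ gives $m_p(r)=(r-|p|)/(1-r|p|)\to 1$ as $r\to 1^-$.

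By the Maximum Modulus Principle \cite[Thm. 7.1]{librospringer2} applied to the regular extension of $g$ on $\overline{B(0,r)}$, one has $|g(q_0)|\leq M_f(r)/m_p(r)$ for every $q_0\in\mathbb{B}$ and every $r$ with $|q_0|<r<1$; letting $r\to 1^-$ and using $M_f(r)\leq 1$, $m_p(r)\to 1$ yields $|g(q_0)|\leq 1$. If equality holds at some $q_0\in\mathbb{B}$, then $|g|$ attains its supremum in the interior, so $g$ is a unimodular constant $u\in\partial\mathbb{B}$, and $f=\mathcal{M}_p\ast u=\mathcal{M}_p u$; conversely, $f=\mathcal{M}_p u$ gives $g\equiv u$ and equality everywhere. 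The main obstacle, I expect, is the $\ast$-algebra bookkeeping in the first step—namely justifying carefully that the formal cancellation $(q-p)^{-\ast}\ast(q-p)=1$ produces the correct regular extension of $g$ across $\mathbb{S}_p$, rather than merely an identity on $\mathbb{B}\setminus\mathbb{S}_p$.
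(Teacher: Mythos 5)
Your proof is correct. Note that the paper does not prove this proposition at all: it is imported verbatim from \cite{BSIndiana} (Thm.~3.2), so there is no internal proof to compare against; your argument is essentially the one given there and is the natural quaternionic transcription of the classical Schwarz--Pick proof. The key points all check out: the commutation $(1-q\overline p)\ast(q-p)=(q-p)\ast(1-q\overline p)$ is a direct coefficient computation, so the cancellation $(q-p)^{-\ast}\ast(q-p)=1$ on $\mathbb B\setminus\mathbb S_p$ does yield the regular extension $g=(1-q\overline p)\ast h$ across $\mathbb S_p$; the estimate $\min_{|w|=r}|\mathcal M_p(w)|\geq\min_{|v|=r}|M_p(v)|=(r-|p|)/(1-r|p|)$ follows since $\mathcal T_p$ preserves moduli (only the inequality is needed, not surjectivity of $\mathcal T_p$ on spheres); and the Maximum Modulus Principle then gives both the bound and the rigidity statement.
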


\begin{theorem}[Quaternionic Schwarz--Pick Lemma {\cite[Thm. 3.7]{BSIndiana}}]\label{thm:spl}
 Let $f:\mathbb B\lto \mathbb B$ be a slice regular function. Let $p\in\mathbb B$. Then for all $q\in\mathbb B$
 \begin{equation}\label{eq:spl_quat}
  |(\mathcal{M}_{f(p)}\bullet f)(q)|\leq |\mathcal M_p(q)|,
 \end{equation}
 \begin{equation*}
  |\mathcal R_{f,p}\ast(1-\overline{f(p)}\ast f)^{-\ast}|_{\mid_q}\leq|(1-\overline{p}\ast q)^{-\ast}|.
 \end{equation*}
Moreover, for any $p\in\mathbb B$
\begin{equation*}
|\partial_Cf\ast(1-\overline{f(p)}\ast f)^{-\ast}|_{\mid_p}\leq\dfrac{1}{1-|p|^2},
\end{equation*}
\begin{equation*}
 \dfrac{|\partial_Sf(p)|}{|1-f^s(p)|}\leq \dfrac{1}{|1-\overline p^2|}.
\end{equation*}
If $f$ is a regular M\"obius transformation, i.e. of the form $\mathcal M_p u$ where $|u|=1$, then all formulas above are equalities. Otherwise, the inequality \eqref{eq:spl_quat} is strict for any $q\in\mathbb B\setminus\{p\}$ while the others are strict on the whole unit ball.

\end{theorem}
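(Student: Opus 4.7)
The plan is to reduce the general case to Proposition~\ref{prop:splzero} (the Schwarz--Pick inequality in the special case $f(p) = 0$) by post-composing $f$ with the regular M\"obius transformation $\mathcal M_{f(p)}$ via the $\tmop{Sp}(1,1)$-action. Set $g \assign \mathcal M_{f(p)} \bullet f$. By Proposition~\ref{prop:action_sp}, $g$ is slice regular with $g(\mathbb B) \subseteq \mathbb B$. A short calculation at $q=p$ via Proposition~\ref{prop:formulastar} gives $g(p) = 0$: the right $\ast$-factor $f - f(p)$ vanishes at $p$ and the left $\ast$-factor $(1 - f\overline{f(p)})^{-\ast}$ evaluates to the nonzero real number $(1-|f(p)|^2)^{-1}$, so the noncommutative nuance of the $\ast$-product is harmless.

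Proposition~\ref{prop:splzero} applied to $g$ then yields $|(\mathcal M_p^{-\ast} \ast g)(q)| \leq 1$ for all $q \in \mathbb B$, which by the boxed identity $f^\star_p = \mathcal M_p^{-\ast} \ast g$ recalled in Section~\ref{sec:spl} is exactly $|f^\star_p(q)| \leq 1$ on $\mathbb B$. From this single bound I derive both displayed inequalities by two applications of the evaluation formula \eqref{eq:quotfg} for regular quotients. For inequality \eqref{eq:spl_quat}, write $f^\star_p = \mathcal M_p^{-\ast} \ast g$ and use \eqref{eq:quotfg} to obtain $f^\star_p(q) = \mathcal M_p(\tau(q))^{-1} g(\tau(q))$ with $\tau(q) = \mathcal M_p^c(q)^{-1} q \mathcal M_p^c(q)$, a modulus- and real-part-preserving real diffeomorphism of $\mathbb B \setminus \mathcal Z(\mathcal M_p^s)$ onto itself (cf.\ the Remark after Proposition~\ref{prop:formulastar}). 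The inequality $|f^\star_p(q)| \leq 1$ rewrites as $|g(\tau(q))| \leq |\mathcal M_p(\tau(q))|$, and surjectivity of $\tau$ (extending across $\mathbb S_p$ by continuity, since both $g$ and $\mathcal M_p$ are slice regular there) gives $|(\mathcal M_{f(p)} \bullet f)(w)| \leq |\mathcal M_p(w)|$ on all of $\mathbb B$.

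For the second inequality, set $H \assign \mathcal R_{f,p} \ast (1 - \overline{f(p)} \ast f)^{-\ast}$; by the very definition of $f^\star_p$ one has $H = (1-q\overline p)^{-\ast} \ast f^\star_p$. Applying \eqref{eq:quotfg} with $A(q) = 1-q\overline p$ gives $|H(q)| = |A(w)|^{-1} |f^\star_p(w)| \leq |A(w)|^{-1} = |A^{-\ast}(q)| = |(1-\overline p \ast q)^{-\ast}|$, which is the stated bound. The pointwise derivative estimates follow by specialising this inequality: at $q=p$, $\mathcal R_{f,p}(p) = \partial_C f(p)$ and a direct application of \eqref{eq:quotfg} gives $|(1-q\overline p)^{-\ast}|_{q=p} = (1-|p|^2)^{-1}$ (since $\tau(p)=p$ as $p$ commutes with $1-p^2$ in its own slice), yielding the Cullen-derivative bound (which via \eqref{eq:fhcullen} is equivalent to $|f^h(p)| \leq 1$); at $q = \overline p$, $\mathcal R_{f,p}(\overline p) = \partial_S f(p)$ by the very definition of the spherical derivative, and $|(1-q\overline p)^{-\ast}|$ evaluates to $|1-\overline p^2|^{-1}$, yielding the spherical bound after the simplification $|(1-\overline{f(p)}\ast f)^{-\ast}|_{\overline p} = |1-f^s(p)|^{-1}$.

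For the equality discussion, if equality holds in \eqref{eq:spl_quat} at some $q_0 \in \mathbb B \setminus \{p\}$ then $|f^\star_p(q_0)| = 1$, and by the Maximum Modulus Principle for slice regular functions $f^\star_p$ reduces to a unimodular constant $u$; hence $g = \mathcal M_p u$, and inverting the $\tmop{Sp}(1,1)$-action (Proposition~\ref{prop:action_sp}) shows $f = \mathcal M_{-f(p)} \bullet (\mathcal M_p u)$ is itself a regular M\"obius transformation. In the opposite direction, if $f = \mathcal M_p u$ a direct computation shows all inequalities become equalities. The main delicate point I expect is the careful handling of the two noncommutative quotient evaluations via \eqref{eq:quotfg}: one must verify that the auxiliary conjugation maps $\tau$ are genuinely surjective onto $\mathbb B$ and that the removable singularities on $\mathcal Z(\mathcal M_p^s) = \mathbb S_p$ and $\mathcal Z((1-q\overline p)^s)$ (which lies outside $\mathbb B$) do not spoil the transfer of the pointwise bound $|f^\star_p| \leq 1$ to the desired inequalities.
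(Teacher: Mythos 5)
The paper does not actually prove this theorem: it is quoted verbatim from Bisi--Stoppato \cite[Thm.~3.7]{BSIndiana}, with no proof environment attached, so your proposal can only be measured against that original argument. Your overall strategy coincides with it: reduce to the vanishing case via the $\tmop{Sp}(1,1)$-action, i.e.\ apply Proposition~\ref{prop:splzero} to $g=\mathcal M_{f(p)}\bullet f$ (which vanishes at $p$), obtain $|f^\star_p|\le 1$, and unwind the $\ast$-quotients pointwise. Your treatment of \eqref{eq:spl_quat}, of the second inequality, and of the Cullen-derivative bound is sound: from $\mathcal M_p\ast f^\star_p=g$ and the evaluation formula one gets $|g(q)|=|\mathcal M_p(q)|\,|f^\star_p(w)|\le|\mathcal M_p(q)|$ for a suitable $w\in\mathbb S_q$ (your detour through surjectivity of $\tau$ achieves the same), and at $q=p$ the conjugating factor $1-p^2$ commutes with $p$, so the evaluations you claim there are honest; note also that $(\partial_Cf\ast K)(p)=(\mathcal R_{f,p}\ast K)(p)$ because the evaluation of a left $\ast$-factor at $p$ depends only on its value at $p$.

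The genuine gap is in the spherical-derivative inequality. Writing $H=\mathcal R_{f,p}\ast K$ with $K=(1-\overline{f(p)}\ast f)^{-\ast}$ and evaluating at $\overline p$ via Proposition~\ref{prop:formulastar} gives $H(\overline p)=\partial_S f(p)\cdot K(c^{-1}\overline p\,c)$ with $c=\partial_S f(p)$: the second factor is evaluated at the conjugated point $c^{-1}\overline p\,c\in\mathbb S_p$, not at $\overline p$, and $|K|=|B^s|^{-1}|B^c|$ is not a priori constant on $\mathbb S_p$ since $B^c$ is not slice preserving. So your ``simplification'' $|(1-\overline{f(p)}\ast f)^{-\ast}|_{\overline p}=|1-f^s(p)|^{-1}$ is both stated at the wrong point and left unproven. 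It is in fact equivalent to the identity $\partial_S(\mathcal M_{f(p)}\bullet f)(p)=(1-\overline{f^s(p)})^{-1}\partial_S f(p)$, which the paper's own Remark producing \eqref{eq:fstar_dspher} explicitly flags as the nontrivial computational claim inside the Bisi--Stoppato proof; establishing it requires tracking the chain of conjugations through $f^c$ and $f^s$, and this is precisely where the quaternionic delicacy lives, so it cannot be waved through as a simplification. Two smaller omissions: you do not argue strictness of the last three inequalities on all of $\mathbb B$ in the non-M\"obius case (it follows from the strictness clause of Proposition~\ref{prop:splzero}, but should be said), and the converse direction that $f=\mathcal M_p u$ forces equality everywhere is left as ``a direct computation''.
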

\begin{remark}
	Recall that, if we denote $M_p (q) = (1 - q \overline{p})^{- 1} (q - p)$, then
$\mathcal{M}_p (q)$ evaluates as $ M_p ((1 - q p)^{- 1} q (1 - q p))$. So the right hand side of \eqref{eq:spl_quat} can be interpreted, once $q \in \mathbb{B}$ is fixed, as the (classical)
pseudo-hyperbolic distance between $p$ and $\mathcal{T}_p (q) \assign (1 - q
p)^{- 1} q (1 - q p)$. On the other hand,
\[ (\mathcal{M}_{f(p)} \bullet f) (q) = (M_{f(p)}
\circ f) (\widetilde{\mathcal{T}}_{f, p}(q)), \]
where
\begin{equation*}\label{eq:ttilde}
\widetilde{\mathcal{T}}_{f,p} (q)\assign \phi(q)^{-1}\cdot q \cdot \phi(q),\quad \phi(q)= (1-f\ast \overline{f(p)})^c(q). 
\end{equation*}

The maps $\mathcal T_p$ and $\widetilde {\mathcal T}_{f,p}$ are not
regular maps, however they are real diffeomorphisms leaving the spheres of type $\mathbb S_p$ invariant, where defined: indeed it is straightforward to check that they preserve the real part and the modulus of any quaternion in the unit ball.

If $\rho_{\mathbb{B}}$ denotes the classical pseudo-hyperbolic
distance $\rho_{\mathbb{B}} (p, q) \assign | M_p (q) |$, then we have
\begin{equation}\label{eq3prho}
	\rho_{\mathbb{B}} (f(p), f (\widetilde{\mathcal{T}}_{f,p} (q))) \leq \rho_{\mathbb{B}} (p, \mathcal{T}_p (q)) . 
\end{equation}
The result is therefore not in general as in the complex case, where in the left hand side we just have that the pullback of the pseudo-hyperbolic distance with respect to an holomorphic function is less or equal than the pseudo-hyperbolic distance itself. Here the two maps $\mathcal T_p$ and $\widetilde{\mathcal T}_{f,p}$ appearing in the two sides are different.

We could also consider the {\emph{regular}} pseudo-hyperbolic distance
$\rho^{\ast}_{\mathbb{B}} (p, q) \assign | \mathcal{M}_p (q) |$. As stated in
{\cite{invariantmetrics}}, it is $\rho^{\ast}_{\mathbb{B}} (p, q) = \rho^{\ast}_{\mathbb{B}}
(q, p)$ and it is the distance associated to the reproducing kernel Hilbert
space $H^2 (\mathbb{B})$ with kernel $k_p (q) = (1 - q \overline{p})^{- \ast}$.
Then $\rho^{\ast}_{\mathbb{B}} (p, q)$ is exactly the right hand side of equation \eqref{spl}
and we have the relation
\[ \rho^{\ast}_{\mathbb{B}} (p, q) = \rho_{\mathbb{B}} (p, \mathcal{T}_p (q)) . \]

Now $\mathcal{T}_p$ is a real diffeomorphism on $\mathbb{B}$ with
inverse $\mathcal{T}_p^{-1}=\mathcal T_{\overline p}:q\mapsto (1-q\overline p)^{-1}q(1-q\overline p)$, so $\rho_{\mathbb{B}} (p, w) =
\rho^{\ast}_{\mathbb{B}} (p, \mathcal{T}_p^{-1} (w))=\rho^\ast_{\mathbb B}(p,\mathcal T_{\overline p}(w))$. Hence \eqref{eq:spl_quat} in terms of
$\rho^{\ast}_{\mathbb{B}}$ becomes
\[ \rho^{\ast}_{\mathbb{B}} \left( f(p), \mathcal{T}_{\overline{f(p)}} (f (\widetilde{\mathcal{T}}_{f,p} (q))) \right) \leq\rho^{\ast}_{\mathbb{B}} (p, q) . \]
In this way, it is clear that the use of the distance $\rho_\mathbb B^{\ast}$ does not give a simpler interpretation of Theorem~\ref{thm:3pspl}. Analogous considerations can be made for the inequality \eqref{eq:spl_quat} of Theorem \ref{thm:spl}.

In the following we will also use the \emph{Poincar\'e distance}:
\[
\delta_\mathbb B (p, q) \assign \tanh^{-1} \rho_\mathbb B(p,q)=\frac{1}{2}\log\frac{1+\rho_\mathbb B(p,q)}{1-\rho_\mathbb B(p,q)}.
\]
By monotonicity of the function $\tanh$, \eqref{eq3prho} still holds if we replace the pseudo-hyperbolic distance with the Poincar\'e distance:
\begin{equation}\label{eq3pdelta}
	\delta_{\mathbb{B}} (f(p), f (\widetilde{\mathcal{T}}_{f,p} (q))) \leq \delta_{\mathbb{B}} (p, \mathcal{T}_p (q)) . 
\end{equation}
This Poincar\'e metric has been first introduced in \cite{poincare} for the quaternionic setting. See also \cite{poincaretrends} for a comparison with the Kobayashi distance of $\mathbb B$ as a subset of $\mathbb C^2$.
\end{remark}

\subsection{\texorpdfstring{\bf Three-points quaternionic Schwarz--Pick Lemma}{Three-points quaternionic Schwarz--Pick Lemma}}\mbox{}

Now, let us return to the hyperbolic difference quotient $f^\star_p$: we can apply the previous results obtaining a Schwarz--Pick type statement which now involves three points of the unit ball.
\begin{theorem}
  Let $f:\mathbb B\lto \mathbb B$ be slice regular and $p\in\mathbb B$. For any $q \in \mathbb{B}$ it is $| f^{\star}_p (q) |
  \leq 1$. The strict inequality holds, except for the case when $f$ is a regular
  M\"obius transformation (times a unimodular constant). In the latter situation,
  it is $f^{\star}_p \equiv u \in \partial \mathbb{B}$.
\end{theorem}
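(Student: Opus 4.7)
The plan is to reduce the statement to the already-established zero case, namely Proposition~\ref{prop:splzero}, applied to the auxiliary function
\[ g \assign \mathcal{M}_{f(p)} \bullet f = (f - f(p)) \ast (1 - \overline{f(p)} \ast f)^{-\ast}. \]
By the very definition of the hyperbolic difference quotient (the boxed formula), $f^\star_p = \mathcal{M}_p^{-\ast} \ast g$, so once we verify that $g$ satisfies the hypotheses of Proposition~\ref{prop:splzero}, the bound $|f^\star_p(q)| \leq 1$ follows immediately.

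First I would check that $g(p) = 0$. Indeed, the left $\ast$-factor $f - f(p)$ vanishes at $p$, hence by the evaluation formula of Proposition~\ref{prop:formulastar} the whole $\ast$-product $g$ vanishes there too. Next I would verify that $g \in \mathcal{SR}(\mathbb{B}, \mathbb{B})$. Slice regularity on the whole of $\mathbb{B}$ is guaranteed by Proposition~\ref{prop:action_sp} (the $\bullet$-action preserves $\mathcal{SR}(\mathbb{B}, \mathbb{B})$), and the bound $|g(q)| \leq |\mathcal{M}_p(q)| < 1$ on $\mathbb{B}$ is exactly the Quaternionic Schwarz--Pick Lemma, Theorem~\ref{thm:spl}. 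With these two facts in hand, Proposition~\ref{prop:splzero} yields
\[ |f^\star_p(q)| = |(\mathcal{M}_p^{-\ast} \ast g)(q)| \leq 1 \qquad \text{for all } q \in \mathbb{B}. \]

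For the equality case, Proposition~\ref{prop:splzero} tells us that the inequality above is strict unless $g = \mathcal{M}_p u$ for some unimodular $u \in \partial\mathbb{B}$. Since the $\bullet$-action of $\tmop{Sp}(1,1)$ is a (right) group action by Proposition~\ref{prop:action_sp}, the matrix of $\mathcal{M}_{f(p)}$ is invertible, and the relation $\mathcal{M}_{f(p)} \bullet f = \mathcal{M}_p u$ may be inverted to express $f$ as the $\bullet$-action of a single element of $\tmop{Sp}(1,1)$ applied to $\mathcal{M}_p u$; this is again a regular M\"obius transformation times a unimodular constant. Conversely, in this situation
\[ f^\star_p = \mathcal{M}_p^{-\ast} \ast (\mathcal{M}_p u) = (\mathcal{M}_p^{-\ast} \ast \mathcal{M}_p)\, u \equiv u, \]
where we used that right multiplication by the constant quaternion $u$ commutes with $\ast$-multiplication (since $u$ has trivial power series beyond the zeroth coefficient). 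This gives the claimed description of the extremal case and shows $f^\star_p$ is the constant $u \in \partial\mathbb{B}$.

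The whole argument is essentially a packaging exercise: the analytic content is already in Theorem~\ref{thm:spl} and Proposition~\ref{prop:splzero}. The only point that requires a little care is the inversion of the $\bullet$-action in the equality case and the observation that constant right factors pass through $\ast$-products, but both are routine consequences of Proposition~\ref{prop:action_sp} and the power-series definition of the $\ast$-product.
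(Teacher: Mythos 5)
Your proposal is correct and follows the same route as the paper: the paper's proof is exactly the reduction to Proposition~\ref{prop:splzero} applied to $g=\mathcal M_{f(p)}\bullet f$, which vanishes at $p$. Your treatment of the equality case (inverting the $\bullet$-action and noting that constants pass through $\ast$-products) just spells out details the paper leaves implicit.
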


\begin{proof} 
  The result follows by \eqref{eq:spl_zero} where $f$ is replaced by the function $g\assign\mathcal{M}_{f (p)} \bullet f$, which has a
  zero at the point $p$, so $\mathcal M_{g(p)}\bullet g=\mathcal M_0\bullet g=g$. 
\end{proof}

\begin{theorem}[\textbf{Three-points Schwarz--Pick Lemma}]\label{thm:3pspl}
Suppose $f : \mathbb{B} \lto \mathbb{B}$ is slice regular but not a
regular M\"obius transformation. Let $p,s\in\mathbb B$. Then for any $q\in\mathbb B$
\begin{equation*}
  | (\mathcal{M}_{f_p^{\star} (s)} \bullet f^{\star}_p) (q) | \leq |
  \mathcal{M}_s (q) | . \label{spl}
\end{equation*}

\end{theorem}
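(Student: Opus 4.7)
The plan is to reduce this to a direct application of the ordinary quaternionic Schwarz--Pick Lemma (Theorem~\ref{thm:spl}) with $f$ replaced by the hyperbolic difference quotient $f^\star_p$.

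First, I would verify that $f^\star_p$ is a legitimate input for Theorem~\ref{thm:spl}, i.e.\ that it is a slice regular self-map of the \emph{open} ball $\mathbb{B}$. Slice regularity of $f^\star_p$ is already part of its definition, since it is a $\ast$-product of slice regular functions (with the $\ast$-denominator $(1-\overline{f(p)}\ast f)$ being non-vanishing on $\mathbb{B}$ because $|f|<1$). The fact that $f^\star_p(\mathbb{B})\subseteq \mathbb{B}$ (strict inclusion) is precisely the content of the theorem immediately preceding the statement: under the hypothesis that $f$ is not a regular M\"obius transformation, one has $|f^\star_p(q)|<1$ for every $q\in\mathbb{B}$ (if $f$ were a M\"obius map, $f^\star_p$ would be a unimodular constant, and the inequality would still hold trivially with $\text{LHS}=0$).

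Next, I would apply Theorem~\ref{thm:spl} to the slice regular self-map $g\assign f^\star_p : \mathbb{B}\lto\mathbb{B}$ at the point $s\in\mathbb{B}$. That theorem guarantees, for every $q\in\mathbb{B}$, the inequality
\[
\bigl|(\mathcal{M}_{g(s)}\bullet g)(q)\bigr| \leq \bigl|\mathcal{M}_s(q)\bigr|.
\]
Substituting back $g=f^\star_p$, so that $g(s)=f^\star_p(s)$, yields exactly the conclusion
\[
\bigl|(\mathcal{M}_{f^\star_p(s)}\bullet f^\star_p)(q)\bigr| \leq \bigl|\mathcal{M}_s(q)\bigr|.
\]

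There is essentially no obstacle beyond packaging: the whole point of introducing $f^\star_p$ via the formula $f^\star_p=\mathcal{M}_p^{-\ast}\ast(\mathcal{M}_{f(p)}\bullet f)$ is that it transforms the two-point Schwarz--Pick datum $(f,p)$ into a new self-map of $\mathbb{B}$ to which Schwarz--Pick can be re-applied at a fresh base point $s$, producing a three-point inequality. If one wanted to address the equality/rigidity statement separately (not part of the stated inequality here but natural to mention), one would trace equality through the reduction: equality for some $q\neq s$ would force $f^\star_p$ to be a regular M\"obius transformation by the rigidity clause of Theorem~\ref{thm:spl}, and Proposition~\ref{prop:blaschke_deg} together with the factorization $\mathcal{M}_{f(p)}\bullet f=\mathcal{M}_p\ast f^\star_p$ would then identify $f$ as a regular Blaschke product of degree two.
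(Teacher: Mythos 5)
Your proposal is correct and follows exactly the paper's argument: the paper also proves this by applying the quaternionic Schwarz--Pick inequality \eqref{eq:spl_quat} of Theorem~\ref{thm:spl} to the self-map $f^\star_p$, whose membership in $\mathcal{SR}(\mathbb{B},\mathbb{B})$ is guaranteed by the preceding theorem. Your additional remarks on the rigidity case likewise match the paper's subsequent proposition and theorem.
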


\begin{proof} 
Just apply \eqref{eq:spl_quat} to the function $f_p^{\star}$.
\end{proof}

To consider when equality holds in the theorem above, we observe the following:

\begin{proposition}
 Suppose $f : \mathbb{B} \lto \mathbb{B}$ is slice
  regular but not a regular M\"obius transformation and fix $p \in
  \mathbb{B}$. Then $f^{\star}_p$ is a regular M\"obius transformation (up to
  right multiplication by unimodular constants) if and only if $f$ is a
  regular Blaschke product of degree $2$.
\end{proposition}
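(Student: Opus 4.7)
The plan rests on the boxed identity $f^\star_p = \mathcal{M}_p^{-\ast} \ast (\mathcal{M}_{f(p)} \bullet f)$, which rearranges to $\mathcal{M}_{f(p)} \bullet f = \mathcal{M}_p \ast f^\star_p$. By Proposition~\ref{prop:blaschke_deg}, the operation $\mathcal{M}_{q_0} \bullet$ preserves the property of being a regular Blaschke product of degree $\kappa$ in both directions. It therefore suffices to show that $\mathcal{M}_{f(p)} \bullet f$ is a Blaschke product of degree $2$ if and only if $f^\star_p$ is a regular M\"obius transformation up to a unimodular constant.

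The ``only if'' direction is immediate: if $f^\star_p = \mathcal{M}_s u$, then $\mathcal{M}_{f(p)} \bullet f = \mathcal{M}_p \ast \mathcal{M}_s u$ is by definition a regular Blaschke product of degree $2$, and the converse part of Proposition~\ref{prop:blaschke_deg} yields that $f$ itself is a Blaschke product of degree $2$.

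For the other direction, suppose $f$ is a Blaschke product of degree $2$. Then $g := \mathcal{M}_{f(p)} \bullet f$ is again a Blaschke product of degree $2$ by Proposition~\ref{prop:blaschke_deg}. Using \eqref{action} together with Proposition~\ref{prop:formulastar}, one has $g(p) = 0$. Applying the standard zero-factorization for slice regular functions, write $g = (q - p) \ast h$ for some $h \in \mathcal{SR}(\mathbb{B})$; equivalently, $g = \mathcal{M}_p \ast B'$ with $B' := (1 - q \overline{p}) \ast h$. A short calculation using Proposition~\ref{prop:formulastar} together with the fact that the $\mathcal{T}$-map associated to $\mathcal{M}_p$ preserves both modulus and real part shows $|B'| \equiv 1$ on $\partial \mathbb{B}$, whence $B'(\mathbb{B}) \subseteq \mathbb{B}$ by the Maximum Modulus Principle. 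Lemma~\ref{blaschke-c} then identifies $B'$ with a Blaschke product times a unimodular constant; counting degrees in $g = \mathcal{M}_p \ast B'$ forces $\deg B' = 1$, so $B' = \mathcal{M}_s u$ and finally $f^\star_p = \mathcal{M}_p^{-\ast} \ast g = B' = \mathcal{M}_s u$.

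The main obstacle is this ``if'' direction: although all ingredients are standard, one must carefully combine the zero-factorization, the boundary characterization of Blaschke products (Lemma~\ref{blaschke-c}), and the additivity of degrees under $\ast$-product in order to extract the left $\ast$-factor $\mathcal{M}_p$ from $g$ and identify the quotient as a single M\"obius map. Once this factorization is secured, the conclusion follows by a one-line algebraic cancellation.
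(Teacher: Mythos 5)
Your proposal is correct and follows essentially the same route as the paper: both rest on the identity $\mathcal{M}_p \ast f_p^{\star} = \mathcal{M}_{f(p)} \bullet f$, use Proposition~\ref{prop:blaschke_deg} to transfer the Blaschke property across the $\bullet$-action, and in the ``if'' direction factor the resulting degree-two Blaschke product vanishing at $p$ as $\mathcal{M}_p \ast \mathcal{M}_{p'} u$. The only difference is that you spell out this last factorization (via the zero-factorization, the boundary modulus, and Lemma~\ref{blaschke-c}) where the paper simply asserts it.
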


\begin{proof} 
  By the very definition of $f_p^{\star}$, it is
  \[ \mathcal{M}_p \ast f_p^{\star} =\mathcal{M}_{f (p)} \bullet f. \]
  Then it is clear that if $f_p^{\star}$ is a regular M\"obius
  transformation, then $\mathcal{M}_{f (p)} \bullet f$ is a  regular Blaschke product
  of degree two. Hence $f$ is also a regular Blaschke product of degree $2$. On the
  other side, if $f$ is a regular Blaschke product of degree $2$, then the right hand side
  is a regular Blaschke product vanishing at $p$, hence it writes down as
  $\mathcal{M}_p \ast \mathcal{M}_{p'} u$ for some $p' \in \mathbb{B}$,
  implying that $f_p^{\star} =\mathcal{M}_{p'} u$.
\end{proof}

As a direct consequence one gets that:
\begin{theorem}
  Equality holds in \eqref{spl} for some $q\in\mathbb B, q\neq s$ if and only if the function $f$ is a regular
  Blaschke product of degree $2$. In this case, equality holds for all $q\in\mathbb B$.
\end{theorem}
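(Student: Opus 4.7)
The plan is to deduce this theorem as an immediate corollary of the quaternionic Schwarz--Pick Lemma (Theorem~\ref{thm:spl}) applied to the hyperbolic difference quotient $f^\star_p$, combined with the preceding proposition that characterizes when $f^\star_p$ is itself a regular Möbius transformation.

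First I would verify the hypothesis for applying Theorem~\ref{thm:spl}: under the standing assumption of Theorem~\ref{thm:3pspl} that $f$ is not a regular Möbius transformation, the theorem stated just before Theorem~\ref{thm:3pspl} yields $|f^\star_p(q)| < 1$ for every $q \in \mathbb{B}$, so $f^\star_p$ is a genuine element of $\mathcal{SR}(\mathbb{B},\mathbb{B})$. Applying Theorem~\ref{thm:spl} to $g \assign f^\star_p$ with basepoint $s$ then produces exactly inequality \eqref{spl}, together with the rigidity part: the inequality is strict at every $q \in \mathbb{B}\setminus\{s\}$ unless $g = f^\star_p$ is itself a regular Möbius transformation, in which case equality holds on all of $\mathbb{B}$. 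The possible right multiplication of $g$ by a unimodular constant is harmless, because the left-hand side of \eqref{spl} is a modulus and the construction \eqref{action} transfers such a constant across without affecting it.

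To finish, I would invoke the preceding proposition, which states that $f^\star_p$ is a regular Möbius transformation (up to a unimodular factor) if and only if $f$ is a regular Blaschke product of degree $2$. Chaining this equivalence with the Möbius-rigidity clause just obtained gives the desired biconditional: equality in \eqref{spl} for some $q \neq s$ holds if and only if $f$ is a regular Blaschke product of degree $2$, and in that case equality extends to every $q \in \mathbb{B}$.

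There is effectively no hard step: the whole content has already been established in Theorem~\ref{thm:spl} and in the classification of when $f^\star_p$ is Möbius. The only subtlety worth spelling out is the reduction of the unimodular-constant ambiguity in that classification to something harmless for the equality statement, which I would dispatch in a single line as above. Thus the proof is essentially a two-line chain of equivalences.
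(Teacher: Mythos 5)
Your proposal is correct and follows exactly the route the paper intends: the statement is presented there as a direct consequence of the equality clause in Theorem~\ref{thm:spl} applied to $f^\star_p$ with basepoint $s$, combined with the preceding proposition identifying when $f^\star_p$ is a regular M\"obius transformation. Your extra remark on the harmlessness of the unimodular factor is a reasonable (and correct) bit of added care.
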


\begin{remark}
	In the proof of Theorem \ref{thm:spl} given in \cite{BSIndiana}, it is claimed that $\partial_S  (\mathcal{M}_{f (p)} \bullet f) (p) = (1 -\overline{f^s (p)})^{- 1} \partial_S f (p)$. From this we can get a formula for $f^\star_{p}$ evaluated in $\overline p$:
	\begin{equation}\label{eq:fstar_dspher}
		f_p^{\star} (\overline{p}) = (1 - \overline{p}^2) \mathcal{R}_{(\mathcal{M}_{f (p)}
			\bullet f), p} (\overline{p}) = (1 - \overline{p}^2)  (1 - \overline{f^s (p)})^{- 1}
		\partial_S f (p) .
	\end{equation}
\end{remark}

\section{Some consequences. Estimates in the case of \texorpdfstring{$f(0)=0$}{f(0)=0}} \label{sec:conseq}

\begin{lemma}\label{lem:sw}
 Let $f:\mathbb B \lto \mathbb B$ be slice regular and such that $f(0)=0$. Then for any $q_0\in\mathbb B$ it is
 \begin{equation*}
  f^{\star}_0(q_0)=f^{\star}_{q_0}(0)= q_0^{-1} f(q_0).
 \end{equation*}
\end{lemma}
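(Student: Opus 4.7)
The key observation is that both quantities simplify to values of the regular difference quotient $\mathcal R$, thanks to the assumption $f(0)=0$. In the defining expression
\[
f^\star_p(q) = (1-q\overline p)\ast \mathcal R_{f,p}(q) \ast (1-\overline{f(p)}\ast f(q))^{-\ast},
\]
setting $p=0$ makes the first factor $1$ (since $\overline p =0$) and the third factor $1$ (since $f(p)=0$); symmetrically, setting $q=0$ also makes the first factor $1$ and the third factor $1$ because $f(q)=f(0)=0$. Hence $f^\star_0(q_0) = \mathcal R_{f,0}(q_0)$ and $f^\star_{q_0}(0) = \mathcal R_{f,q_0}(0)$, and it suffices to prove that both equal $q_0^{-1}f(q_0)$.

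For these, I would use the defining identity $f(q)-f(p) = (q-p)\ast \mathcal R_{f,p}(q)$ twice. Applied with $p=0$ it gives $f(q) = q\ast \mathcal R_{f,0}(q)$; since the function $q\mapsto q$ is slice preserving, the $\ast$-product reduces to the pointwise one, so $f(q_0) = q_0\,\mathcal R_{f,0}(q_0)$ and thus $\mathcal R_{f,0}(q_0) = q_0^{-1}f(q_0)$ for $q_0\neq 0$. Applied with $p=q_0$ and evaluated at $q=0$, the right hand side requires Proposition~\ref{prop:formulastar}: the factor $q-q_0$ does not vanish at $0$ (since $q_0\neq 0$), so
\[
\bigl((q-q_0)\ast \mathcal R_{f,q_0}(q)\bigr)\big|_{q=0} = (-q_0)\cdot \mathcal R_{f,q_0}\bigl((-q_0)^{-1}\cdot 0\cdot(-q_0)\bigr) = -q_0\,\mathcal R_{f,q_0}(0),
\]
and combined with $f(0)-f(q_0) = -f(q_0)$ this yields $\mathcal R_{f,q_0}(0) = q_0^{-1}f(q_0)$.

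The argument is essentially mechanical; the only point to watch is the non-commutativity of the $\ast$-product at a fixed point, which is why the second case must pass through Proposition~\ref{prop:formulastar} rather than being immediate. The case $q_0=0$ is handled by continuity: both $f^\star_0(0)$ and $f^\star_{q_0}(0)\big|_{q_0=0}$ reduce to the hyperbolic derivative $f^h(0)$, which by \eqref{eq:fhcullen} specialized at $p=0$, $f(p)=0$ equals $\partial_C f(0)$, consistent with the limit of $q_0^{-1}f(q_0)$ as $q_0\to 0$ along the reals.
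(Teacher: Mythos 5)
Your proof is correct and rests on the same two observations as the paper's own argument: the rotations that appear when $\ast$-products are evaluated pointwise fix the origin, and the hypothesis $f(0)=0$ trivializes the normalizing factors, so that both quantities reduce to the value $q_0^{-1}f(q_0)$ of the regular difference quotient. The only cosmetic difference is that you work from the definition of $f^{\star}_p$ via $\mathcal R_{f,p}$ while the paper uses the equivalent expression $\mathcal M_p^{-\ast}\ast(\mathcal M_{f(p)}\bullet f)$; your explicit treatment of the case $q_0=0$ by continuity is a small bonus.
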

\begin{proof} 
 Since $\mathcal M_0$ is the identity function $q\mapsto q$ and $\mathcal M_0\bullet f=f$, it is clear that $f^{\star}_0(q_0)=(\mathcal M_0^{-\ast}\ast (\mathcal M_0\bullet f))(q_0)=q_0^{-1} f(q_0)$. On the other side, since $0$ is invariant by the rotations appearing by evaluating the $\ast$-products punctually, then
\begin{equation*}
 f^{\star}_{q_0}(0)=(\mathcal M_{q_0}^{-\ast}\ast(\mathcal M_{f(q_0)}\bullet f))(0)=M_{q_0}(0)^{-1}M_{f(q_0)}(0)=(-q_0)^{-1}(-f(q_0))=q_0^{-1} f(q_0). \qedhere
\end{equation*}
\end{proof}

More generally, we have the following
\begin{lemma}\label{lem:swm}
Let $f:\mathbb B \lto \mathbb B$ be slice regular. Whenever $r,s\in(-1,1)$ are real, it is $\left|f^{\star}_r (s)\right|=\left|f^{\star}_s(r)\right|$.
\end{lemma}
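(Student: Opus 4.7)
The plan is to use the fact that $r,s\in\mathbb R$ to render every rotation appearing in the pointwise evaluation of the $\ast$-products defining $f^\star_r(s)$ trivial, and thereby obtain an explicit closed form whose only ``asymmetric'' ingredient under the swap $r\leftrightarrow s$ is a Blaschke-type denominator that is modulus-invariant under quaternionic conjugation. This can be viewed as a direct extension of the argument carried out for $r=0$ in Lemma~\ref{lem:sw}.

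First I would unfold the definition $f^\star_r=\mathcal M_r^{-\ast}\ast(\mathcal M_{f(r)}\bullet f)$ and evaluate at $s$. Because $r\in\mathbb R$, every power series coefficient of $\mathcal M_r$ is real, so by Proposition~\ref{prop:slice_pres} $\mathcal M_r$ is slice-preserving, and consequently so is $\mathcal M_r^{-\ast}$. A slice-preserving regular function takes a real value at a real argument, and real quaternions are central in $\mathbb H$, so Proposition~\ref{prop:formulastar} collapses the $\ast$-product at $q=s$ into an ordinary pointwise product: $f^\star_r(s)=\mathcal M_r^{-\ast}(s)\,(\mathcal M_{f(r)}\bullet f)(s)$, with $\mathcal M_r^{-\ast}(s)=\mathcal M_r(s)^{-1}=(1-sr)/(s-r)$. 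For the remaining factor I would invoke the identity $(\mathcal M_{f(r)}\bullet f)(q)=M_{f(r)}(f(\widetilde{\mathcal T}_{f,r}(q)))$ recalled in Section~\ref{sec:spl}; since $\widetilde{\mathcal T}_{f,r}$ preserves both the real part and the modulus of its argument, it fixes every real point, so $(\mathcal M_{f(r)}\bullet f)(s)=(1-f(s)\overline{f(r)})^{-1}(f(s)-f(r))$. Altogether
\begin{equation*}
  f^\star_r(s)=\frac{1-sr}{s-r}\,(1-f(s)\overline{f(r)})^{-1}(f(s)-f(r)).
\end{equation*}

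Swapping $r$ and $s$, the sign changes in $s-r$ and in $f(s)-f(r)$ cancel, so the same computation gives
\begin{equation*}
  f^\star_s(r)=\frac{1-sr}{s-r}\,(1-f(r)\overline{f(s)})^{-1}(f(s)-f(r)).
\end{equation*}
Taking moduli, the real scalar factor $(1-sr)/(s-r)$ and the norm $|f(s)-f(r)|$ are common to both expressions, and the two remaining factors have equal modulus because $1-f(r)\overline{f(s)}$ is the quaternionic conjugate of $1-f(s)\overline{f(r)}$. The only step requiring any care is the reduction of the $\ast$-product to a pointwise product in the evaluation above; this works precisely because $\mathcal M_r^{-\ast}(s)\in\mathbb R$ and hence commutes with every quaternion, killing the twisting map in Proposition~\ref{prop:formulastar}. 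The degenerate cases $r=s$ (both sides equal $|f^h(r)|$) and $f(r)=f(s)$ (both sides zero) need no separate treatment.
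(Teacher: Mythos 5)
Your proof is correct and follows essentially the same route as the paper: both reduce $f^\star_r(s)$ to $M_r(s)^{-1}M_{f(r)}(f(s))$ using the reality of $r,s$ to trivialize the rotations in the $\ast$-evaluations, and then compare with $f^\star_s(r)=M_s(r)^{-1}M_{f(s)}(f(r))$. The only cosmetic difference is at the last step, where the paper quotes the identity $M_p(q)=-(1-q\overline p)^{-1}(1-p\overline q)M_q(p)$ to get $|M_p(q)|=|M_q(p)|$, while you observe directly that $1-f(r)\overline{f(s)}$ is the quaternionic conjugate of $1-f(s)\overline{f(r)}$ — the same fact in a slightly more explicit form.
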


\begin{proof} 
Using the fact that $\mathcal M_r=M_r$ is a slice preserving function and that $\mathcal M_{f(r)}\bullet f$ coincides with $M_{f(r)}\circ f$ on the real interval $(-1,1)$, one gets
 \[
  f^{\star}_r(s)=(\mathcal{M}_r^{-\ast}\ast(\mathcal{M}_{f(r)}\bullet f))(s)=M_r(s)^{-1}M_{f(r)}(f(s)).
 \]
 Analogously,
 \[
  f^{\star}_s(r)=(\mathcal{M}_s^{-\ast}\ast(\mathcal{M}_{f(s)}\bullet f))(r)=M_s(r)^{-1}M_{f(s)}(f(r)).
 \]
Now one concludes by recalling that $|M_p(q)|=|M_q(p)|$ for any $q,p\in\mathbb B$, as
\[
 M_p(q)= - (1-q\overline p)^{-1}(1-p\overline q) M_q (p)
\]
and the factor $- (1-q\overline p)^{-1}(1-p\overline q)$ has clearly modulus one.
\end{proof}

 Lemma \ref{lem:swm} is important to get the following `four-point' result (see \cite[Cor 3.3]{BeardonMinda04}), where we have to assume that two of the points are real. Here we consider the Poincar\'e distance $\delta_\mathbb B$.

\begin{corollary}
 Let $r,s\in (-1,1)$ and $q,p\in\mathbb B$. For $f:\mathbb B\lto\mathbb B$ slice regular function which is not a regular M\"obius transformation, we have
 \[
\delta_{\mathbb{B}}(0, f^{\star}_r(q))\leq \delta_{\mathbb B}(0,f^{\star}_s(p))+\delta_{\mathbb{B}}(p,r)+\delta_{\mathbb{B}}(q,s).
 \]
\end{corollary}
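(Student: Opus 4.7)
The plan is to combine the triangle inequality for the Poincar\'e distance $\delta_\mathbb{B}$ with the symmetry from Lemma~\ref{lem:swm} and with a Schwarz--Pick contraction that becomes clean whenever one of the base points is real. More precisely, I would first establish the preliminary fact that, for any slice regular $g:\mathbb{B}\lto\mathbb{B}$ and any real number $\zeta\in(-1,1)$,
$$
\delta_\mathbb{B}(g(w), g(\zeta)) \leq \delta_\mathbb{B}(w, \zeta)\qquad\text{for every }w\in\mathbb{B}.
$$
This would follow by applying Theorem~\ref{thm:spl} to $g$ at base point $w$ and evaluating the resulting inequality at $\zeta$: both rotations $\mathcal{T}_w$ and $\widetilde{\mathcal{T}}_{g,w}$ are conjugations of the form $z\mapsto\varphi(z)^{-1}\,z\,\varphi(z)$, so for $z=\zeta\in\mathbb{R}$ they reduce to the identity since real numbers commute with every quaternion. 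The inequality of Theorem~\ref{thm:spl} therefore collapses to $\rho_\mathbb{B}(g(w), g(\zeta))\leq \rho_\mathbb{B}(w, \zeta)$, which transfers to the Poincar\'e distance by monotonicity of $\tanh^{-1}$.

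Since $f$ is not a regular M\"obius transformation, the bound $|f^{\star}_p(q)|<1$ on $\mathbb{B}$ (stated immediately before Theorem~\ref{thm:3pspl}) shows that both $f^{\star}_r$ and $f^{\star}_s$ are slice regular self-maps of $\mathbb{B}$. Applying the preliminary fact to $g=f^{\star}_r$ at $\zeta=s$ then yields $\delta_\mathbb{B}(f^{\star}_r(s), f^{\star}_r(q))\leq \delta_\mathbb{B}(s,q)$, and to $g=f^{\star}_s$ at $\zeta=r$ yields $\delta_\mathbb{B}(f^{\star}_s(p), f^{\star}_s(r))\leq \delta_\mathbb{B}(p,r)$. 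Chaining two applications of the triangle inequality together with these bounds and with Lemma~\ref{lem:swm}, which yields $\delta_\mathbb{B}(0, f^{\star}_r(s))=\delta_\mathbb{B}(0, f^{\star}_s(r))$, produces
\begin{align*}
\delta_\mathbb{B}(0, f^{\star}_r(q)) &\leq \delta_\mathbb{B}(0, f^{\star}_r(s)) + \delta_\mathbb{B}(f^{\star}_r(s), f^{\star}_r(q))\\
&\leq \delta_\mathbb{B}(0, f^{\star}_s(r)) + \delta_\mathbb{B}(s, q)\\
&\leq \delta_\mathbb{B}(0, f^{\star}_s(p)) + \delta_\mathbb{B}(f^{\star}_s(p), f^{\star}_s(r)) + \delta_\mathbb{B}(s, q)\\
&\leq \delta_\mathbb{B}(0, f^{\star}_s(p)) + \delta_\mathbb{B}(p, r) + \delta_\mathbb{B}(q, s),
\end{align*}
which is exactly the claimed inequality.

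The main obstacle I anticipate is precisely the preliminary fact: one must recognize that the extra rotations $\mathcal{T}$ and $\widetilde{\mathcal{T}}$ obstructing a direct quaternionic analogue of the complex Schwarz--Pick contraction vanish whenever one of the two evaluation points is a real number, since real numbers are central in $\mathbb{H}$. Once this observation is in place, the remainder of the argument is a routine chaining via the triangle inequality together with the symmetry in Lemma~\ref{lem:swm}, and this is also the reason why it is essential that the two points $r$ and $s$ in the statement be real rather than arbitrary quaternions.
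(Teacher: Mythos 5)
Your proof is correct and follows essentially the same route as the paper: two applications of the triangle inequality, the contraction $\delta_{\mathbb B}(g(w),g(\zeta))\leq\delta_{\mathbb B}(w,\zeta)$ coming from \eqref{eq3pdelta} with the real evaluation point killing the rotations $\mathcal T$ and $\widetilde{\mathcal T}$, and the symmetry $|f^{\star}_r(s)|=|f^{\star}_s(r)|$ from Lemma~\ref{lem:swm}. Your explicit isolation of the ``real evaluation point'' contraction as a preliminary fact is, if anything, a slightly cleaner bookkeeping of the same argument.
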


\begin{proof} 
By the triangle inequality and \eqref{eq3pdelta}, where we replace $f$ with $f_r^\ast$ and the two generic points $p$ and $q$ appearing there with $0$ and $s$, it is
 \begin{equation*}
\delta_{\mathbb{B}}(0, f^{\star}_r(q))\leq \delta_{\mathbb B}(0, f^{\star}_r(s))+\delta_{\mathbb B}(f^{\star}_r(q),f^{\star}_r(s))\leq \delta_{\mathbb B}(0,f^{\star}_r(s))+\delta_{\mathbb B}(q,s).
 \end{equation*}
By Lemma \ref{lem:swm} $|f^{\star}_r(s)|=|f^{\star}_s(r)|$, so their Poincar\'e distance from the origin is the same. Thus
\[
 \delta_{\mathbb{B}}(0, f^{\star}_r(q))\leq \delta_{\mathbb B}(0,f^{\star}_s(r))+\delta_{\mathbb B}(q,s).
\]
Applying, as above, triangle inequality and \eqref{eq3pdelta} to the first term on the right hand side, one gets
\[
 \delta_{\mathbb{B}}(0, f^{\star}_r(q))\leq \delta_{\mathbb B}(0,f^{\star}_s(p))+\delta_{\mathbb B}(p,r)+\delta_{\mathbb B}(q,s).\qedhere
\]
\end{proof}

\subsection{Distortion estimates in terms of the hyperbolic derivative}
{Almost like the complex case \cite{BeardonMinda04}, one can use the three-point Schwarz--Pick Lemma to get bounds for the values of the hyperbolic derivative of a regular self-map of the unit ball fixing the origin. As the hyperbolic derivative is related to the Cullen derivative, this estimates can be seen as distortion estimates, giving control to how the map can deform lengths and areas along slices going towards the boundary.
First, we need to show how Euclidean and pseudo-hyperbolic balls are related. Indeed, they are the same but with different centers and radii:}

\begin{lemma}\label{lem:balls}
 Let $c_0\in\mathbb B$ and $0<r_0<1$. Let us consider the pseudo-hyperbolic ball $B_\rho(c_0,r_0)\assign\{q\in\mathbb B : \rho_\mathbb B(q,c_0)<r_0\}$. This coincides with the Euclidean ball $B(c_1,r_1)\assign\{q\in\mathbb H:|q-c_1|<r_1\}$ where
 \[
 \begin{cases}
c_1 = \dfrac{1-r_0^2}{1-|c_0|^2 r_0^2} c_0,\\
r_1 = \dfrac{1-|c_0|^2}{1-|c_0|^2 r_0^2} r_0.
 \end{cases}
\]

\end{lemma}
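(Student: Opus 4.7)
The plan is to unwind the definition $\rho_\mathbb{B}(q,c_0)=|M_{c_0}(q)|=|(1-q\overline{c_0})^{-1}(q-c_0)|$, transform the defining inequality into a quadratic in $q$ (viewed as an element of $\mathbb{R}^4\cong\mathbb{H}$) and complete the square to recognize it as a Euclidean ball.

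First I would rewrite the condition $\rho_\mathbb{B}(q,c_0)<r_0$ in the equivalent form $|q-c_0|<r_0|1-q\overline{c_0}|$, and then square both sides. Expanding, with $2\operatorname{Re}(q\overline{c_0})=q\overline{c_0}+c_0\overline{q}$, this becomes
\begin{equation*}
|q|^2(1-r_0^2|c_0|^2)-2(1-r_0^2)\operatorname{Re}(q\overline{c_0})+(|c_0|^2-r_0^2)<0.
\end{equation*}
Since $1-r_0^2|c_0|^2>0$ (because both $|c_0|$ and $r_0$ are less than one) I can divide through by this factor, and using the definition of $c_1$ I recognize the coefficient of the linear term as $2\operatorname{Re}(q\overline{c_1})$. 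Thus the inequality reads
\begin{equation*}
|q|^2-2\operatorname{Re}(q\overline{c_1})+\frac{|c_0|^2-r_0^2}{1-r_0^2|c_0|^2}<0,
\end{equation*}
which is equivalent to $|q-c_1|^2<|c_1|^2-\frac{|c_0|^2-r_0^2}{1-r_0^2|c_0|^2}$.

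The main (and only) computational step is then to verify that the right-hand side equals $r_1^2$. Plugging in $|c_1|^2=\frac{(1-r_0^2)^2}{(1-r_0^2|c_0|^2)^2}|c_0|^2$ and reducing to a common denominator, the numerator simplifies to $r_0^2(1-|c_0|^2)^2$, giving exactly $r_1^2=\bigl(\frac{1-|c_0|^2}{1-r_0^2|c_0|^2}r_0\bigr)^2$. Since every step is reversible, the two balls coincide.

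I do not expect any genuine obstacle: the argument is a standard completion-of-square that works verbatim as in the complex disk, because in the computation the quaternion $q$ only enters through $|q|^2$ and $\operatorname{Re}(q\overline{c_0})$, quantities that are insensitive to noncommutativity. The only care is in checking that $1-r_0^2|c_0|^2>0$ so that one can divide without flipping the inequality, and in performing the algebraic simplification of the radius cleanly.
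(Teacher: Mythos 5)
Your argument is correct, and the algebra checks out: squaring $|q-c_0|<r_0|1-q\overline{c_0}|$ gives $|q|^2(1-r_0^2|c_0|^2)-2(1-r_0^2)\operatorname{Re}(q\overline{c_0})+(|c_0|^2-r_0^2)<0$, and the numerator in the radius computation does reduce to $r_0^2(1-|c_0|^2)^2$. Your key observation --- that $q$ enters only through $|q|^2$ and $\operatorname{Re}(q\overline{c_0})$, so noncommutativity is invisible --- is exactly what makes the direct computation legitimate over $\mathbb{H}$.

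The route is genuinely different from the paper's. The paper first uses invariance of $\rho_{\mathbb{B}}$ under multiplication by a unimodular constant to reduce to the case $c_0\in[0,1)$ real, then works slice by slice: on each $\mathbb{C}_I$ it takes for granted (from classical M\"obius geometry, plus symmetry under $q\mapsto\overline q$) that $B_\rho(c_0,r_0)\cap\mathbb{B}_I$ is a disk symmetric about the real axis, so it suffices to solve $x-c_0=\pm r_0(1-c_0x)$ for the two real boundary points and read off center and radius as their midpoint and half-difference. Your completion of squares in $\mathbb{R}^4$ buys self-containedness: you do not need the normalization, the slice decomposition, or the a priori knowledge that the set is a round ball --- the computation produces the ball. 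The paper's version is shorter and more geometric but leans on the complex picture. One point both arguments leave implicit: to identify $B_\rho(c_0,r_0)$ with the \emph{full} Euclidean ball $B(c_1,r_1)$ rather than with $B(c_1,r_1)\cap\mathbb{B}$, one should note that $|c_1|+r_1=\frac{|c_0|+r_0}{1+|c_0|r_0}<1$, so $B(c_1,r_1)\subseteq\mathbb{B}$; this is a one-line check and not a real gap in either proof.
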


\begin{proof} 
 By observing that the pseudo-hyperbolic distance is invariant by multiplication with a unimodular constant, we may assume that $c_0=|c_0|\in [0,1)$ and then deduce the general case by applying the inverse rotation. With this assumption, for any $I\in\mathbb S$ we observe that $B_\rho (c_0,r_0)\cap \mathbb B_I$ is an open disk which is symmetric with respect to the real line, described by the equation\footnote{Symmetry w.r.t. the real line follows by invariance with respect to conjugation $q\mapsto \overline q$} $|q-c_0|<r_0 |1-c_0 q|$ . Hence, to find the (Euclidean) center and radius of such disk, we compute the intersections of the boundary with the real line:
 \begin{equation}\label{eq:intersectionball}
  x-c_0 = \pm r_0 (1- c_0 x) \implies x_{\pm}=\dfrac{c_0\pm r_0}{1 \pm c_0 r_0}.
 \end{equation}
Hence $B_\rho (c_0,r_0)\cap \mathbb B_I$ corresponds to a disk along the slice of center
\[
 c_1=\frac{1}{2}(x_++x_-)=\frac{1}{2}\dfrac{(c_0+r_0)(1-c_0 r_0)+(c_0-r_0)(1+c_0r_0)}{1-c_0^2 r_0^2}=\dfrac{c_0 (1-r_0^2)}{1-c_0^2 r_0^2}
\]
and radius
\[
 r_1=\frac{1}{2}(x_+-x_-)=\frac{1}{2}\dfrac{(c_0+r_0)(1-c_0 r_0)-(c_0-r_0)(1+c_0r_0)}{1-c_0^2 r_0^2}=\dfrac{r_0 (1-c_0^2)}{1-c_0^2 r_0^2}.
\]
So $B_\rho (c_0,r_0)\cap\mathbb B_I=B(c_1,r_1)\cap\mathbb B_I=\{x+Iy\in\mathbb B: |x+Iy -c_1|<r_1\}$ for all $I\in\mathbb S$.
\end{proof}

{Now we can show the following inequality:}
\begin{proposition}[\textbf{Quaternionic Dieudonn\'e's estimate 1}]\label{prop:died1}
Let $f:\mathbb B \lto \mathbb B$ be slice regular, such that $f(0)=0$. Suppose $q_0\in\mathbb B$ and set $\alpha_{f}(q_0)\assign \frac{1-|f(q_0)|^2}{1-|q_0|^2}$. Then
\begin{equation}\label{dieud_1}
 \left| \alpha_f(q_0) f^h(q_0) - q_0^{-1} f(q_0) \right|\leq \dfrac{|q_0|^2-|f(q_0)|^2}{|q_0|(1-|q_0|^2)}.
\end{equation}
\end{proposition}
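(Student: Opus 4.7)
The plan is to apply the quaternionic Schwarz--Pick Lemma (Theorem~\ref{thm:spl}) to the auxiliary function $g\assign f_0^{\star}$ and then transfer the resulting bound to quantities involving $f$ via the factorization $f = q \ast g$. Since $f(0)=0$, Lemma~\ref{lem:sw} gives $g(q_0) = q_0^{-1} f(q_0)$, and because $\mathcal{M}_0$ is the identity map the defining formula of the hyperbolic difference quotient reduces to $g = q^{-\ast} \ast f$, which is slice regular from $\mathbb{B}$ to $\mathbb{B}$ (by Proposition~\ref{prop:splzero}, or by the fact that $g$ coincides with $f_{q_0}^{\star}(0)$ argued as in Lemma~\ref{lem:sw}); equivalently, $f = q \ast g$.

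The central computation is the $\ast$-Leibniz rule applied to $f = q \ast g$. Evaluating at $q_0 \ne 0$ via Proposition~\ref{prop:formulastar}, the identity function $q$ induces the trivial conjugation $\mathcal{T}_q = \mathrm{id}$, so
\[
q_0 \, \partial_C g(q_0) \;=\; \partial_C f(q_0) - q_0^{-1} f(q_0).
\]
Applying Theorem~\ref{thm:spl} to $g$ at $q_0$ yields $|g^h(q_0)| \leq 1$; combining this with formula~\eqref{eq:fhcullen} for $g$ and with the identity $|g(q_0)| = |f(q_0)| / |q_0|$, one obtains
\[
|q_0 \, \partial_C g(q_0)| \;\leq\; |q_0|\cdot \frac{1 - |g(q_0)|^2}{1 - |q_0|^2} \;=\; \frac{|q_0|^2 - |f(q_0)|^2}{|q_0|(1 - |q_0|^2)},
\]
which is exactly the right-hand side of~\eqref{dieud_1}. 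To conclude, it remains to identify the left-hand side of~\eqref{dieud_1} with $q_0 \partial_C g(q_0)$: substituting~\eqref{eq:fhcullen} for $f^h(q_0)$, the quantity $\alpha_f(q_0) f^h(q_0) = \frac{1-|f(q_0)|^2}{1-|q_0|^2} f^h(q_0)$ is identified with $\partial_C f(q_0)$, and combining this with the Leibniz identity above yields $\alpha_f(q_0) f^h(q_0) - q_0^{-1} f(q_0) = q_0 \, \partial_C g(q_0)$.

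The main obstacle is handling non-real $q_0$: for real $q_0$ the identification $\alpha_f(q_0) f^h(q_0) = \partial_C f(q_0)$ is immediate, since the auxiliary point $p_0 \in \mathbb{S}_{q_0}$ in~\eqref{eq:fhcullen} collapses to $q_0$ itself, whereas for general $q_0 \in \mathbb{B}$ the twist factor $(1 - \overline{f(q_0)} f(p_0))^{-1}$ does not simplify to $(1-|f(q_0)|^2)^{-1}$. One must therefore track the $\ast$-product evaluations of both $f_{q_0}^{\star}$ and $g_{q_0}^{\star}$ via Proposition~\ref{prop:formulastar}, verifying that the same twist factor arises in the expression for $\alpha_f f^h$ and in the Schwarz--Pick bound on $g^h(q_0)$, so that the cancellation producing $q_0 \, \partial_C g(q_0)$ survives in the noncommutative setting. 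Once this identification is secured, the desired inequality~\eqref{dieud_1} is an immediate consequence of the Schwarz--Pick estimate on $|q_0 \, \partial_C g(q_0)|$ obtained above.
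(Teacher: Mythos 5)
Your overall strategy --- reduce to the factorization $f=q\ast g$ with $g=f^{\star}_0$ and then invoke a derivative bound for $g$ --- contains a genuine gap at exactly the point you flag as ``the main obstacle,'' and that obstacle is not a technicality one can expect to ``verify'': the two identities your argument needs are false for non-real $q_0$. First, the identification $\alpha_f(q_0)\,f^h(q_0)=\partial_C f(q_0)$ would require the twist factor in \eqref{eq:fhcullen} to satisfy $(1-\overline{f(q_0)}f(p_0))^{-1}=(1-|f(q_0)|^2)^{-1}$, i.e.\ $f(p_0)=f(q_0)$ for the point $p_0\in\mathbb S_{q_0}$ produced by the $\ast$-evaluation; this holds when $q_0$ is real but there is no reason for it otherwise, and in addition $\partial_C f(q_0)$ need not commute with that factor. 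Second, the bound $|\partial_C g(q_0)|\leq\frac{1-|g(q_0)|^2}{1-|q_0|^2}$ does not follow from $|g^h(q_0)|\leq 1$ together with \eqref{eq:fhcullen}: unwinding \eqref{eq:fhcullen} only gives $|\partial_C g(q_0)|\leq\frac{|1-\overline{g(q_0)}\,g(p_0')|}{1-|q_0|^2}$ for some $p_0'\in\mathbb S_{q_0}$ depending on $g$, and $|1-\overline{g(q_0)}\,g(p_0')|$ can exceed $1-|g(q_0)|^2$. Moreover the points $p_0$ and $p_0'$ arising for $f$ and for $g$ are a priori different, so there is no mechanism for the two twist factors to cancel against each other. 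Your Leibniz identity $q_0\,\partial_C g(q_0)=\partial_C f(q_0)-q_0^{-1}f(q_0)$ is correct, but the two bridges connecting it to the statement are not available. (Your argument is essentially the complex-variable proof, which is precisely what breaks down off the real axis.)

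The statement is deliberately phrased in terms of the hyperbolic derivative $f^h(q_0)=f^{\star}_{q_0}(q_0)$ rather than $\partial_C f(q_0)$ so that no twist factors need to be compared. The paper's route avoids derivatives of $g$ altogether: apply the Schwarz--Pick inequality in the form \eqref{eq3prho} to the self-map $f^{\star}_{q_0}$ at the pair of points $q_0$ and $0$. Since $0$ is fixed by all the conjugation maps $\mathcal T_{q_0}$ and $\widetilde{\mathcal T}$, this gives cleanly $\rho_{\mathbb B}\bigl(f^h(q_0),\,f^{\star}_{q_0}(0)\bigr)\leq\rho_{\mathbb B}(q_0,0)=|q_0|$, and Lemma \ref{lem:sw} identifies $f^{\star}_{q_0}(0)=q_0^{-1}f(q_0)$. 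Thus $f^h(q_0)$ lies in the pseudo-hyperbolic ball $B_\rho(q_0^{-1}f(q_0),|q_0|)$, and Lemma \ref{lem:balls} converts this into the Euclidean ball whose center is $\alpha_f(q_0)^{-1}q_0^{-1}f(q_0)$ and whose radius, after multiplying through by the positive real scalar $\alpha_f(q_0)$, is exactly the right-hand side of \eqref{dieud_1}. If you want to salvage your write-up, replace the derivative computation by this two-point pseudo-hyperbolic argument; the factorization $f=q\ast g$ then only enters through Lemma \ref{lem:sw}.
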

\begin{proof} 
 By \eqref{eq3prho} applied to the function $f^\star_{q_0}$, we have $\rho_{\mathbb B}(f^h(q_0),f^{\star}_{q_0}(0))\leq \rho_\mathbb B(q_0,0)=|q_0|$: so by Lemma \ref{lem:sw} $f^h(q_0)$ lies in the pseudo-hyperbolic ball $B_\rho (q_0^{-1} f(q_0), |q_0|)$. By Lemma \ref{lem:balls}
 \[
  B_\rho (c_0,r_0)=B\left(c_0\frac{1-r_0^2}{1-|c_0|^2 r_0^2},r_0\frac{1-|c_0|^2}{1-|c_0|^2 r_0^2}\right),
 \]
so one gets that $f^h(q_0)\in B\left(\alpha_f(q_0)^{-1} q_0^{-1} f(q_0), \frac{|q_0|^2-|f(q_0)|^2}{|q_0|(1-|f(q_0)|^2)}\right)$, which is equivalent to \eqref{dieud_1}.

\end{proof}

{A direct consequence is a distortion estimate at a point $q_0$, depending on the norm $|q_0|$, supposing that we are close enough to the boundary.}
\begin{corollary}[\textbf{Quaternionic Dieudonn\'e's estimate 2}]
 By the same assumptions of Proposition \ref{prop:died1}, \begin{equation}\label{dieud_2}
 \left|f^h(q_0)\right|\leq \begin{cases} \alpha_f(q_0)^{-1} & \text{if}\quad|q_0|\leq \sqrt 2-1\\ \alpha_f(q_0)^{-1}\dfrac
 {(1+|q_0|^2)^2}{4|q_0|(1-|q_0|^2)} & \text{if}\quad\sqrt 2-1<|q_0|< 1.\end{cases}
\end{equation}
\end{corollary}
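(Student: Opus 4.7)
The plan is to deduce the estimate directly from Proposition \ref{prop:died1} by applying the triangle inequality and then optimising the resulting bound over the possible values of $|f(q_0)|$.

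First, by Proposition \ref{prop:died1} combined with the triangle inequality,
\[
\alpha_f(q_0)\,|f^h(q_0)| \leq \left|q_0^{-1}f(q_0)\right| + \frac{|q_0|^2-|f(q_0)|^2}{|q_0|(1-|q_0|^2)} = \frac{|f(q_0)|}{|q_0|}+\frac{|q_0|^2-|f(q_0)|^2}{|q_0|(1-|q_0|^2)}.
\]
I would then introduce the shorthand $r=|q_0|$ and $t=|f(q_0)|$. Observe that $t\leq r$: this follows from $|f^\star_0(q_0)|\leq 1$ (the three-points estimate in the trivial case $p=0$) together with Lemma \ref{lem:sw}, which gives $f^\star_0(q_0)=q_0^{-1}f(q_0)$. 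Hence the problem reduces to bounding from above the real function
\[
g(t)\assign \frac{t}{r}+\frac{r^2-t^2}{r(1-r^2)}, \qquad t\in[0,r].
\]

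Next I would carry out a one-variable optimisation. Compute $g'(t)=\frac{1-r^2-2t}{r(1-r^2)}$, so the unique critical point is $t_\ast=\frac{1-r^2}{2}$, and it lies in $[0,r]$ precisely when $r\geq \sqrt{2}-1$. If $r\leq \sqrt{2}-1$, then $g$ is increasing on $[0,r]$ and attains its maximum at the endpoint $t=r$, where $g(r)=1$; this immediately yields $|f^h(q_0)|\leq \alpha_f(q_0)^{-1}$. If instead $\sqrt{2}-1<r<1$, the maximum is $g(t_\ast)$; a direct computation gives
\[
g(t_\ast)=\frac{1}{r(1-r^2)}\left(\frac{(1-r^2)^2}{4}+r^2\right)=\frac{(1+r^2)^2}{4r(1-r^2)},
\]
after recognising $(1-r^2)^2+4r^2=(1+r^2)^2$. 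Dividing by $\alpha_f(q_0)$ gives exactly the second branch of \eqref{dieud_2}.

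There is no real obstacle here: the entire argument is a one-variable calculus optimisation built on top of the Euclidean ball description coming from Proposition \ref{prop:died1}. The only subtlety worth flagging explicitly is the verification that the feasible range for $t=|f(q_0)|$ is indeed $[0,r]$ (i.e., that the quaternionic Schwarz Lemma $|f(q)|\leq |q|$ under $f(0)=0$ applies), as this is what dictates which branch of the maximum is active and produces the threshold $r=\sqrt{2}-1$.
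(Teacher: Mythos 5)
Your proof is correct and is essentially the argument the paper invokes: the paper's proof simply cites the classical optimisation from Duren (pp.~199--200), which is exactly the triangle-inequality-plus-one-variable-maximisation over $t=|f(q_0)|\in[0,|q_0|]$ that you carry out explicitly, with the critical point $t_\ast=\tfrac{1-r^2}{2}$ entering the feasible interval precisely at $r=\sqrt{2}-1$. Your justification of $|f(q_0)|\le|q_0|$ via $f^\star_0(q_0)=q_0^{-1}f(q_0)$ and Lemma \ref{lem:sw} is the right quaternionic substitute for the Schwarz Lemma, so nothing is missing.
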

\begin{proof} 
 Inequalities \eqref{dieud_2} can be obtained from \eqref{dieud_1} by using exactly the same argument shown in \cite[pag. 199-200]{DurenUnivalent}.
\end{proof}

{Another estimate for hyperbolic derivative can be obtained, involving the Cullen derivative at the origin and the norm of the point:}
\begin{proposition}[\textbf{Quaternionic Goluzin's estimate}]
 Let $f:\mathbb B \lto \mathbb B$ be a slice regular self-map of the unit ball such that $f(0)=0$. Then for any $q_0\in\mathbb B$ it is
 \begin{equation}
  \label{goluzin}
  \left|f^h(q_0)\right|\leq \dfrac{\left|\partial_C f(0)\right|+\frac{2|q_0|}{1+|q_0|^2}}{1+\left| \partial_C f(0)\right|\frac{2|q_0|}{1+|q_0|^2}}.
 \end{equation}

\end{proposition}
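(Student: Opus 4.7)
Pass to the Poincaré distance $\delta_\mathbb B$, where the bound becomes additive. Using the hyperbolic identities $\tanh(a+b)=(\tanh a+\tanh b)/(1+\tanh a\tanh b)$ and $\tanh(2\tanh^{-1}x)=2x/(1+x^2)$, together with $|f^h(q_0)|=\tanh\delta_\mathbb B(0,f^h(q_0))$ and the monotonicity of $\tanh$, one verifies directly that \eqref{goluzin} is equivalent to
\[ \delta_\mathbb B(0,f^h(q_0))\;\le\;\delta_\mathbb B(0,\partial_C f(0))+2\,\delta_\mathbb B(0,q_0). \]
It thus suffices to establish this additive hyperbolic inequality, and the proof will proceed by applying the triangle inequality twice combined with the quaternionic Schwarz--Pick Lemma \eqref{eq3pdelta}.

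Two identities make the chain transparent. From \eqref{eq:fhcullen} taken at $p=0$, together with the assumption $f(0)=0$, one has $f^\star_0(0)=\partial_C f(0)$, while $f^h(q_0)=f^\star_{q_0}(q_0)$ by the very definition of the hyperbolic derivative. Moreover, Lemma \ref{lem:sw} supplies the crucial symmetry $f^\star_0(q_0)=f^\star_{q_0}(0)=q_0^{-1}f(q_0)$. Inserting the intermediate points $\partial_C f(0)=f^\star_0(0)$ and $f^\star_{q_0}(0)=f^\star_0(q_0)$ and applying the triangle inequality twice yields
\[ \delta_\mathbb B(0,f^h(q_0))\le\delta_\mathbb B(0,\partial_C f(0)) + \delta_\mathbb B(f^\star_0(0),f^\star_{q_0}(0)) + \delta_\mathbb B(f^\star_{q_0}(0),f^\star_{q_0}(q_0)). \]

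Assume first that $f$ is not a regular Möbius transformation, so that by the theorem preceding Theorem \ref{thm:3pspl} both $f^\star_0$ and $f^\star_{q_0}$ are slice regular self-maps of $\mathbb B$ with strictly interior values; the Schwarz--Pick estimate \eqref{eq3pdelta} therefore applies to each. For the last summand, apply \eqref{eq3pdelta} to $g=f^\star_{q_0}$ at $p=q_0$, $q=0$: since $\mathcal T_{q_0}(0)=0$ and $\widetilde{\mathcal T}_{g,q_0}(0)=0$, this collapses to $\delta_\mathbb B(f^\star_{q_0}(q_0),f^\star_{q_0}(0))\le\delta_\mathbb B(q_0,0)$. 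For the middle summand, Lemma \ref{lem:sw} rewrites it as $\delta_\mathbb B(f^\star_0(0),f^\star_0(q_0))$, to which the same procedure with $g=f^\star_0$ (again at $p=q_0$, $q=0$) applies, yielding the same bound $\delta_\mathbb B(q_0,0)$. Summing the three pieces gives the desired additive inequality. The degenerate case when $f$ is itself a Möbius transformation with $f(0)=0$ is immediate: then $f(q)=qu$ for some $u\in\partial\mathbb B$, so $|\partial_C f(0)|=1$, $f^\star_p\equiv u$, and both sides of \eqref{goluzin} equal $1$.

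\textbf{Main obstacle.} The transfer of the classical Beardon--Minda argument is complicated by the real diffeomorphisms $\mathcal T_p$ and $\widetilde{\mathcal T}_{g,p}$ appearing in \eqref{eq3prho}--\eqref{eq3pdelta}, which twist the arguments and cannot in general be absorbed into the Poincaré distance. The observation that unlocks the quaternionic proof is that \emph{both} rotations fix the origin; hence by consistently invoking Schwarz--Pick with the evaluation point $q=0$, one recovers the clean pointwise bound $\delta_\mathbb B(g(p),g(0))\le|p|$, which is precisely what each of the two estimates above demands.
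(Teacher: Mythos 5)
Your proof is correct and follows essentially the same route as the paper: two applications of the triangle inequality for $\delta_\mathbb B$, the symmetry $f^\star_0(q_0)=f^\star_{q_0}(0)$ from Lemma \ref{lem:sw}, and the Schwarz--Pick estimate \eqref{eq3pdelta} evaluated at the origin (where the twisting maps $\mathcal T_p$ and $\widetilde{\mathcal T}$ act trivially), followed by hyperbolic trigonometry. The only cosmetic differences are that you conclude via the $\tanh$ addition formula instead of solving the resulting linear inequality in $|f^h(q_0)|$, and that you explicitly treat the degenerate M\"obius case, which the paper leaves implicit.
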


\begin{proof} 
First we observe that\[
f^h(0)=f^{\star}_0(0)=(\mathcal M_{0}^{-\ast}\ast(\mathcal M_0\bullet f))(0)= q^{-1} f(q)\mid_{q=0}=\partial_C f(0).\]
 We do our computation with the Poincar\'e distance: by the triangle inequality and \eqref{eq3pdelta} we have
 \[
  \delta_\mathbb B(0, f^h(q_0))\leq \delta_\mathbb B (0, f_{q_0}^{\star} (0))+\delta_\mathbb B (q_0,0).
 \]
 Now the first term in the right hand side coincides with $\delta_\mathbb B (0, f_0^{\star}(q_0))$ by Lemma \ref{lem:sw}. By the same argument as above
 \[
  \delta_\mathbb B(0, f^{\star}_0(q_0))\leq \delta_\mathbb B (0, f^h(0))+\delta_\mathbb B (q_0,0).
 \]
In the end we get $ \delta_\mathbb B(0, f^h(q_0))-\delta_\mathbb B (0, f^h(0))\leq 2 \delta_\mathbb B(q_0,0)$. In all the computations made above we can exchange $0$ and $q_0$, thus
\[
\left|\delta_\mathbb B(0, f^h(q_0))-\delta_\mathbb B (0, f^h(0))\right|\leq 2 \delta_\mathbb B(q_0,0)=2 \tanh^{-1}|q_0|,
\] where the left hand side equals $\delta_\mathbb B (|f^h(q_0)|,|\partial_C f(0)|)$. Applying the function $\tanh$ to both sides and recalling the duplication formula $\tanh (2x)=\tfrac{2 \tanh x}{1+ \tanh^2 x}$, we obtain that
\begin{align*}
 \dfrac{|f^h(q_0)|-|\partial_C f(0)|}{1-|\partial_C f(0)||f^h(q_0)|} &\leq \rho_\mathbb B(|f^h(q_0)|,|\partial_C f(0)|)\leq \dfrac{2|q_0|}{1+|q_0|^2}\\
 \implies|f^h(q_0)|-|\partial_C f(0)| & \leq \dfrac{2|q_0|}{1+|q_0|^2}-\left(|\partial_C f(0)|\dfrac{2|q_0|}{1+|q_0|^2}\right)|f^h(q_0)|.
\end{align*}
By solving the last inequality in $|f^h(q_0)|$ one gets \eqref{goluzin}.\qedhere

\end{proof}

\subsection{\texorpdfstring{\bf Distortion estimates for the class $\boldsymbol{\mathcal B_\alpha}$}{Distortion estimates for the class Bα}}\mbox{}

Let $\alpha\in[0,1)$ and let $\mathcal B_\alpha$ be the class of slice regular functions $f:\mathbb B \lto \mathbb B$ such that $f(0)=0$ and $\partial_C f (0)=\alpha$.  Let us observe that in this case the Cullen derivative at zero coincides with the hyperbolic derivative $f^h(0)$ by the definition of the latter. For such functions we have the following estimates.

\begin{proposition}[see {\cite[Thm. 6.1]{BeardonMinda04}}]
 Let $0\leq\alpha<1$ and $f\in\mathcal B_\alpha$. Then for all $q\in\mathbb B$
 \begin{equation*}
 \dfrac{|f^h(q)-\alpha|}{|1-\alpha f^h(q)|}\leq\dfrac{2|q|}{1+|q|^2}
 \end{equation*}
 and moreover
 \begin{equation}\label{eq:bound_balpha}
  \dfrac{\alpha |q|^2-2|q|+\alpha}{|q|^2-2\alpha |q|+1}\leq \tmop{Re} f^h (q) \leq |f^h(q)|\leq \dfrac{\alpha |q|^2+2|q|+\alpha}{|q|^2+2\alpha |q|+1}.
 \end{equation}
 The same estimates hold if we replace $f^h(q)$ with $f^{\star}_q(\overline{q})$.
\end{proposition}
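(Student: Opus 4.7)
The plan is to reduce both stated inequalities to the single pseudo-hyperbolic bound
\[
\rho_{\mathbb B}(\alpha, f^h(q)) \leq \tfrac{2|q|}{1+|q|^2}.
\]
Once this is in hand, the first stated inequality follows immediately: since $\alpha\in\mathbb R$ we have $\overline\alpha=\alpha$ and $\alpha f^h(q)=f^h(q)\alpha$, so by definition of the pseudo-hyperbolic distance $\rho_{\mathbb B}(\alpha, f^h(q)) = |(1 - f^h(q)\alpha)^{-1}(f^h(q) - \alpha)| = |f^h(q) - \alpha|/|1 - \alpha f^h(q)|$. The second set of inequalities \eqref{eq:bound_balpha} follows by noting that the pseudo-hyperbolic bound places $f^h(q)$ in the ball $B_\rho(\alpha, 2|q|/(1+|q|^2))$, which by Lemma \ref{lem:balls} coincides with a Euclidean ball $B(c_1, r_1)$ with \emph{real} center (because $\alpha$ is real); hence $|f^h(q) - c_1| \leq r_1$ forces $c_1 - r_1 \leq \tmop{Re} f^h(q) \leq |f^h(q)| \leq c_1 + r_1$. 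A direct calculation with the formulas of Lemma \ref{lem:balls}—substituting $c_0 = \alpha$, $r_0 = 2|q|/(1+|q|^2)$, and factoring $(1+|q|^2)^2 - 4\alpha^2|q|^2 = (1+|q|^2-2\alpha|q|)(1+|q|^2+2\alpha|q|)$—yields $c_1 \pm r_1 = (\alpha|q|^2 \pm 2|q| + \alpha)/(|q|^2 \pm 2\alpha|q| + 1)$, exactly matching \eqref{eq:bound_balpha}.

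To establish the key pseudo-hyperbolic bound I would mimic the proof of the Goluzin estimate given above. Since $f(0) = 0$, Lemma \ref{lem:sw} gives $f^\star_0(q) = q^{-1}f(q) = f^\star_q(0)$, and in particular $f^h(0) = f^\star_0(0) = \partial_C f(0) = \alpha$. Inserting the intermediate point $f^\star_q(0) = f^\star_0(q)$ into the triangle inequality for the Poincar\'e distance yields
\[
\delta_{\mathbb B}(\alpha, f^h(q)) = \delta_{\mathbb B}(f^\star_0(0), f^\star_q(q)) \leq \delta_{\mathbb B}(f^\star_0(0), f^\star_0(q)) + \delta_{\mathbb B}(f^\star_q(0), f^\star_q(q)).
\]
Each summand is of the form $\delta_{\mathbb B}(g(0), g(q))$ with $g$ a slice regular self-map of $\mathbb B$ (namely $g = f^\star_0$ and $g = f^\star_q$, which are self-maps by the first theorem of this section). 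Specializing \eqref{eq3pdelta} to $p = 0$ (so $\mathcal T_0 = \mathrm{id}$) and reparametrizing the free variable through $\widetilde{\mathcal T}_{g, 0}^{-1}$—which preserves quaternionic moduli—gives $\delta_{\mathbb B}(g(0), g(q)) \leq \delta_{\mathbb B}(0, q)$ for each summand. Therefore $\delta_{\mathbb B}(\alpha, f^h(q)) \leq 2\delta_{\mathbb B}(0, q) = 2\tanh^{-1}|q|$, and the duplication identity $\tanh(2x) = 2\tanh x/(1+\tanh^2 x)$ produces the desired bound. The parenthetical claim for $f^\star_q(\overline q)$ is obtained by the identical argument, since only the second summand changes, becoming $\delta_{\mathbb B}(f^\star_q(0), f^\star_q(\overline q)) \leq \delta_{\mathbb B}(0, \overline q) = \delta_{\mathbb B}(0, q)$.

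The principal technical nuisance is the twist $\widetilde{\mathcal T}_{g, p}$ appearing inside \eqref{eq3pdelta}: since it is not the identity, one cannot literally read off the standard Schwarz--Pick bound $\delta_{\mathbb B}(g(0), g(q)) \leq \delta_{\mathbb B}(0, q)$ from \eqref{eq3pdelta}, but after substituting $q \mapsto \widetilde{\mathcal T}_{g, 0}^{-1}(q)$ and invoking modulus-preservation the right-hand side is unchanged. Beyond this bookkeeping, the argument is a direct adaptation of the Beardon--Minda complex proof \cite[Thm.~6.1]{BeardonMinda04}, streamlined because the reality of $\alpha$ kills the noncommutativity issues in the algebraic simplification and guarantees a real-centered ball when identifying the pseudo-hyperbolic and Euclidean disks.
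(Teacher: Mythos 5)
Your argument is correct and follows essentially the same route as the paper: the triangle inequality through the intermediate point $f^\star_0(q)=f^\star_q(0)$ from Lemma \ref{lem:sw}, two applications of \eqref{eq3pdelta}, the $\tanh$ duplication formula, and the identification of the pseudo-hyperbolic ball via Lemma \ref{lem:balls}. The only small omission is the preliminary observation (needed so that $f^\star_0$ and $f^\star_q$ are genuine self-maps of $\mathbb B$ rather than unimodular constants, which is what legitimizes applying \eqref{eq3pdelta} to them) that $f\in\mathcal B_\alpha$ cannot be a regular M\"obius transformation: $f(0)=0$ would force $f(q)=qu$ with $|u|=1$, contradicting $\partial_C f(0)=\alpha<1$.
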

\begin{proof} 
 If $f\in\mathcal B_\alpha$, then it is not a M\"obius transformation: otherwise $f(0)=0$ would imply that $f(q)=q\cdot u$, $u\in\partial \mathbb B$ and then it would be $\alpha=\partial_C f(0)=u$, a contradiction. 

 By applying the triangle inequality, recalling that by Lemma \ref{lem:sw} $f^{\star}_0(q)=f^{\star}_q(0)$ and then using \eqref{eq3pdelta} twice one gets that
 \begin{align*}
  \delta_\mathbb B (\alpha, f^h(q)) &=\delta_\mathbb B (f^{\star}_0(0),f^{\star}_q(q))=\delta_\mathbb B (f^{\star}_0(0), f^{\star}_0(q))+\delta_\mathbb B (f^{\star}_0(q), f^{\star}_q(q))\\
  &=\delta_\mathbb B (f^{\star}_0(0), f^{\star}_0(q))+\delta_\mathbb B (f^{\star}_q(0), f^{\star}_q(q))\leq 2 \delta_\mathbb B(0,q),
 \end{align*}
which can be rewritten as
\begin{equation*}
\tanh^{-1} \rho_\mathbb B(\alpha,f^h(q))\leq 2 \tanh^{-1}\rho_\mathbb B(0,q)=2 \tanh^{-1}|q|;
\end{equation*}
henceforth, by monotonicity of $\tanh$ and its duplication formula we have that
\begin{equation}\label{eq:distortion1}
 \rho_\mathbb B (\alpha, f^h(q))\leq \frac{2|q|}{1+|q|^2}
\end{equation}
and the left hand side corresponds by definition to $|M_\alpha(f^h(q))|=|1-\alpha f^h(q)|^{-1}|f^h(q)-\alpha|$.

Now, by Lemma \ref{lem:balls} the inequality \eqref{eq:distortion1} says that for any $q\in\mathbb B$, $f^h(q)$ lies on a Euclidean ball contained in $\mathbb B$ which is symmetric with respect to $\mathbb R$ since $\alpha$ is real: the intersections with the real line of such ball are given by \eqref{eq:intersectionball} where $c_0=\alpha$ and $r_0=2|q|/(1+|q|^2)$:
\[
 x_{\pm}=\dfrac{\alpha\pm 2|q|/(1+|q|^2)}{1\pm 2\alpha|q|/(1+|q|^2)}= \dfrac{\alpha|q|^2\pm 2|q|+\alpha}{|q|^2\pm 2\alpha|q|+1},\qquad x_- < x_{+},
\]
and its (Euclidean) center is
\[
\dfrac{x_++x_-}{2}=\left[ \dfrac{1-\left(\frac{2|q|}{1+|q|^2}\right)^2}{1-\alpha^2\left(\frac{2|q|}{1+|q|^2}\right)^2}\right]\cdot \alpha=\left[\dfrac{(1-|q|^2)^2}{1+2(1-\alpha^2)|q|^2+|q|^4}\right]\cdot \alpha \geq 0.
\]

Hence the real part of $f^h(q)$ lies between $x_{-}$ and $x_{+}$. Since $x_{+}\geq \max\{0,-x_{-}\}$, $x_+$ gives an upper bound also for $|f^h(q)|$.

If now we consider $f^{\star}_q(\overline{q})$, we have
 \begin{align*}
  \delta_\mathbb B (\alpha, f^{\star}_q(\overline{q})) &=\delta_\mathbb B (f^{\star}_0(0),f^{\star}_q(\overline{q}))=\delta_\mathbb B (f^{\star}_0(0), f^{\star}_0(q))+\delta_\mathbb B (f^{\star}_0(q), f^{\star}_q(\overline{q}))\\
  &=\delta_\mathbb B (f^{\star}_0(0), f^{\star}_0(q))+\delta_\mathbb B (f^{\star}_q(0), f^{\star}_q(\overline{q}))\leq \delta_\mathbb B(0,q)+\delta_\mathbb B (0,\overline{q})=2 \delta_\mathbb B(0,q)
 \end{align*}
 and so we get the same estimates by an identical argument.
\end{proof}
Given $\alpha\in[0,1)$, as above, let $\varrho(\alpha):=\dfrac{\alpha}{1+\sqrt{1-\alpha^2}}$. Clearly $\varrho(\alpha)<1$ and $\varrho(0)=0$.
\begin{corollary}[see {\cite[Cor. 6.2]{BeardonMinda04}}]
If $f\in\mathcal B_\alpha$, then for $|q|<\varrho(\alpha)$ we have that $\tmop{Re} f^h(q)>0$ and $\tmop{Re} f^{\star}_q(\overline q)>0$. In particular, both the Cullen and the spherical derivatives at $q$, $\partial_C f(q)$ and $\partial_S f(q)$, don't vanish.
\end{corollary}
\begin{proof}
	Observe that the left hand side of \eqref{eq:bound_balpha} is positive when $|q|<\varrho(\alpha)$ and vanishes for $|q|=\varrho(\alpha)$. In particular, we get that $f^h(q)$ and $f^\ast_q(\overline q)$ are different from zero and so are $\partial_Cf(q)$ and $\partial_Sf(q)$ by \eqref{eq:fhcullen} and \eqref{eq:fstar_dspher}.
\end{proof}
\begin{remark}
	In the complex setting, the estimate above can be used to conclude that the radius of univalence of the analogous class $\mathcal B_\alpha$ (\cite[Cor. 6.2]{BeardonMinda04}) is exactly $\varrho(\alpha)$. Indeed, the fact that the derivative at $q$ is not zero implies that we have (local) injectivity by the Inverse Function Theorem.
	
	In the quaternionic framework, however, the nonvanishing of both the Cullen and the spherical derivative at $q$ is not sufficient to conclude that the differential at $q$ is of maximal rank. We recall from \cite{ghiloniperotti21} that the differential of a slice regular function $f$ at a point $q=x+yI\notin \mathbb R$ computed on a tangent vector $\alpha$ is
	\[
	(df)_q(\alpha)=\pi_I(\alpha)\partial_Cf(q)+\pi_I^\perp(\alpha)\partial_Sf(q),
	\]
	where $\pi_I$ is the projection on the slice $\mathbb C_I$ and $\pi_I^\perp$ the projection on its orthogonal complement $\mathbb C_I^\perp$. Moreover, the determinant of the Jacobian matrix is given by \[
	\det J_f(q)=\big|\pi_I\big(\partial_Cf(q)\overline{\partial_S f(q)}\big)\big|^2.
	\]
	Hence the rank of $df$ at $q$ is maximal exactly when $\partial_S f(q)\neq 0$ and $\partial_C f(q)(\partial_S f(q))^{-1}\notin \mathbb C_I^\perp$, which is more than just requiring $\partial_Cf(q)\neq 0$ and $\partial_S f(q)\neq 0$. So an extra argument is required if one wants to conclude that $\varrho(\alpha)$ is the univalence radius of $\mathcal B_\alpha$ as a consequence of the estimate \eqref{eq:bound_balpha}.
	However, the fact that $\varrho(\alpha)$ is such a radius has already been proven in \cite{BSLandau}, but with a different approach.
\end{remark}

%
%
%

\section{Iterated hyperbolic difference quotients and the multipoint Schwarz--Pick Lemma}\label{sec:iterated}
 We have seen that if $f:\mathbb B \lto \mathbb B$ is slice regular and not a M\"obius transformation, then for any $p_1\in\mathbb B$ the function $f^{\star}_{p_1}$ is a slice regular self-map of the unit ball. We can define the hyperbolic difference quotient of $f^{\star}_{p_1}$ with respect to $p_2\in\mathbb B$, getting a unimodular constant whether $f^{\star}_{p_1}$ is M\"obius -- hence $f$ is a regular Blaschke product of degree two -- or obtaining a slice regular self-map of $\mathbb B$ otherwise. In the latter case we can go further with the iteration of such procedure taking the hyperbolic difference quotient with respect to a point $p_3$. In the end we have the following definition (see \cite{ChoKimSugawa12} or \cite{BaribeauRivardWegert} for the complex case).
\begin{definition}

 Let $f:\mathbb B\lto \mathbb B$ be a slice regular function, let $n\in\mathbb N,n>0$ and $p_1,\ldots, p_n$ points in $\mathbb B$. Then we set $f^{\{1\}}_{p_1}(q)\assign f^{\star}_{p_1}(q)$ and for $n>1$
 
 \begin{equation*}
f^{\{n\}}_{p_1,\ldots,p_n}(q)\assign\begin{cases} f^{\{n-1\}}_{p_1,\ldots,p_{n-1}} \quad\text{if }f^{\{n-1\}}_{p_1,\ldots,p_{n-1}} \text{ is a unimodular constant,}\\
(\mathcal{M}_{p_n}^{-\ast}\ast(\mathcal{M}_{f^{\{n-1\}}_{p_1,\ldots,p_{n-1}}(p_n)}\bullet f^{\{n-1\}}_{p_1,\ldots,p_{n-1}}))(q) \quad\text{ otherwise.}
                                \end{cases}
 \end{equation*}
 Let us observe that $f^{\{n\}}_{p_1,\ldots,p_n}=(f^{\{n-1\}}_{p_1,\ldots,p_{n-1}})^{\star}_{p_n}$ in the second case; moreover we get a unimodular constant in the case $f$ is a regular Blaschke product of degree less or equal to $n$, a M\"obius transformation if $f$ is Blaschke of degree $n+1$ and a regular Blaschke product of degree $\kappa\geq 2$ if $f$ is Blaschke of degree $n+\kappa$.
\end{definition}
\begin{theorem}
 Let $f:\mathbb B\lto \mathbb B$ be a slice regular function, $n>0$ integer and $p_1,\ldots, p_{n}\in\mathbb B$. Then
 \begin{equation*}
  |f^{\{n\}}_{p_1,\ldots,p_{n}}(q)|\leq 1\qquad\text{for all }q\in\mathbb B
 \end{equation*}
with strict inequality holding for each point, unless $f$ is a regular Blaschke product of degree $\leq n$, as the left term is a unimodular constant independent of $q$.
\end{theorem}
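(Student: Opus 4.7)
The proof proceeds by induction on $n$. The plan is to leverage the one-point Schwarz--Pick bound for hyperbolic difference quotients (the first theorem of Section \ref{sec:spl}, which asserts $|f^\star_p(q)|\leq 1$ with strict inequality unless $f$ is a regular M\"obius transformation times a unimodular constant) and apply it at each iteration step.

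\textbf{Base case} ($n=1$). Here $f^{\{1\}}_{p_1}=f^\star_{p_1}$, so the bound and equality characterization are exactly the first theorem of Section~\ref{sec:spl}. Equality for some $q$ forces $f$ to be a regular M\"obius transformation, i.e.\ a Blaschke product of degree $1=n$.

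\textbf{Inductive step.} Assume the statement for $n-1$, and split according to the cases in the definition of $f^{\{n\}}_{p_1,\ldots,p_n}$. If $f^{\{n-1\}}_{p_1,\ldots,p_{n-1}}$ is already a unimodular constant, then by definition $f^{\{n\}}_{p_1,\ldots,p_n}$ equals the same constant; this is the situation where $f$ is a Blaschke product of degree $\leq n-1\leq n$, and the conclusion holds trivially with equality. Otherwise, the inductive hypothesis gives $|g(q)|<1$ for all $q\in\mathbb B$, where $g\assign f^{\{n-1\}}_{p_1,\ldots,p_{n-1}}$; hence $g$ is a genuine slice regular self-map of $\mathbb B$, to which we can apply the base-case inequality at $p_n$. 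This yields
\[
\bigl|f^{\{n\}}_{p_1,\ldots,p_n}(q)\bigr|=\bigl|g^\star_{p_n}(q)\bigr|\leq 1\qquad\text{for all }q\in\mathbb B.
\]

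\textbf{Characterization of equality.} Equality at some $q\in\mathbb B$ in the base-case applied to $g$ forces $g=f^{\{n-1\}}_{p_1,\ldots,p_{n-1}}$ to be a regular M\"obius transformation (up to a unimodular constant). By the classification recorded right after the definition of $f^{\{n\}}$, this occurs precisely when $f$ is a regular Blaschke product of degree $n$, in which case $f^{\{n\}}_{p_1,\ldots,p_n}$ is a unimodular constant, independent of $q$ --- as claimed. Conversely, if $f$ is a Blaschke product of degree $\leq n$, the same classification gives that $f^{\{n\}}_{p_1,\ldots,p_n}$ is a unimodular constant, so equality holds identically.

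No serious obstacle is anticipated: the argument is a clean induction, and the only delicate bookkeeping is matching the two cases of the definition to the two possibilities (Blaschke of degree $\leq n-1$ vs.\ exactly $n$) that produce unimodular output at stage $n$. Everything substantial --- the Schwarz--Pick bound and the characterization of its equality cases --- is already available from Section~\ref{sec:spl}.
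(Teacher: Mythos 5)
Your proof is correct and follows essentially the same route as the paper's: the paper also obtains the result by applying the one-point bound $|f^\star_p(q)|\leq 1$ (itself a consequence of Proposition 2.1/\eqref{eq:spl_zero}) to $f^{\{n-1\}}_{p_1,\ldots,p_{n-1}}$, i.e.\ an implicit induction on $n$. Your version merely spells out the induction and the equality bookkeeping in more detail than the paper's one-line argument.
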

\begin{proof} 
 It is Proposition \ref{prop:splzero} when $n=1$. For $n>1$, just apply the same result to the function $f^{\{n-1\}}_{p_1,\ldots,p_{n-1}}$.
\end{proof}

\begin{theorem}[\textbf{Multipoint Schwarz--Pick Lemma}]
 Let $f:\mathbb B\lto \mathbb B$ be a slice regular function, $n>0$ integer and $p,p_1,\ldots, p_{n}\in\mathbb B$. If $f$ is not a regular Blaschke product of degree $\leq n$, then
 \[
|(\mathcal M_{f^{\{n\}}_{p_1,\ldots,p_n}(p)}\bullet f^{\{n\}}_{p_1,\ldots,p_n})(q)|\leq |\mathcal{M}_p(q)|
 \]
 for all $q\in\mathbb B$ and the inequality is strict for $q\in\mathbb B\setminus\{p\}$ unless $f$ is a regular Blaschke product of degree $n+1$.
\end{theorem}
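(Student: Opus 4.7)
The proof amounts to reducing the $n$-point statement to the two-point quaternionic Schwarz--Pick Lemma (Theorem~\ref{thm:spl}) applied to the iterated hyperbolic difference quotient itself. Set $g\assign f^{\{n\}}_{p_1,\ldots,p_n}$. By the hypothesis, $f$ is not a regular Blaschke product of degree $\leq n$, and therefore the discussion following the definition of $f^{\{n\}}_{p_1,\ldots,p_n}$ ensures that the construction proceeds recursively without ever collapsing to a unimodular constant, i.e. that at each stage $1\leq k\leq n$, the intermediate function $f^{\{k-1\}}_{p_1,\ldots,p_{k-1}}$ is a genuine (non-Möbius and non-constant) slice regular self-map of $\mathbb B$ whose hyperbolic difference quotient at $p_k$ is again well-defined. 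By the preceding theorem, applied iteratively, $g:\mathbb B\lto\mathbb B$ is slice regular with $|g(q)|<1$ for every $q\in\mathbb B$.

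Once $g$ is recognized as a slice regular self-map of the unit ball, I would simply apply Theorem~\ref{thm:spl} to $g$ at the base point $p$: for every $q\in\mathbb B$,
\[
|(\mathcal M_{g(p)}\bullet g)(q)|\leq |\mathcal M_p(q)|,
\]
which is exactly the desired inequality once one unfolds $g = f^{\{n\}}_{p_1,\ldots,p_n}$.

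For the strictness assertion, Theorem~\ref{thm:spl} tells us that the inequality is strict for every $q\in\mathbb B\setminus\{p\}$ unless $g$ itself is a regular Möbius transformation (times a unimodular constant, which is irrelevant after taking absolute values). Thus equality at some $q\neq p$ forces $f^{\{n\}}_{p_1,\ldots,p_n}$ to be Möbius, and then the observation recorded in the definition of the iterated hyperbolic difference quotient — that $f^{\{n\}}$ is Möbius precisely when $f$ is a regular Blaschke product of degree $n+1$ — identifies exactly the extremal case. Conversely, if $f$ is a regular Blaschke product of degree $n+1$, then $g$ is a Möbius transformation and Theorem~\ref{thm:spl} yields equality for all $q\in\mathbb B$.

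The only genuinely delicate point is the very first step: making sure that the classification of $f^{\{n\}}_{p_1,\ldots,p_n}$ according to whether $f$ is a Blaschke product of degree $\leq n$, $=n+1$, or otherwise — asserted in the definition — is rigorously established. This follows by induction on $n$ together with Proposition~\ref{prop:blaschke_deg} and the fact, already recorded in Section~\ref{sec:spl}, that $f^\star_p$ is Möbius iff $f$ is a Blaschke product of degree two: at each stage, factoring out the M\"obius transformation $\mathcal M_{p_k}$ in the denominator decreases the Blaschke degree by one, and the hypothesis in our statement precisely guarantees that we never reach degree zero during the $n$ steps of the iteration. With this bookkeeping in hand, the rest of the argument is an immediate invocation of Theorem~\ref{thm:spl}.
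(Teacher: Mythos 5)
Your proposal is correct and follows exactly the paper's route: the paper's own proof is the one-line observation that the statement is Theorem~\ref{thm:spl} applied to $f^{\{n\}}_{p_1,\ldots,p_n}$, which is a slice regular self-map of $\mathbb B$ under the stated hypothesis. Your additional bookkeeping on why the iteration never collapses to a unimodular constant, and the equality analysis via the Blaschke-degree classification of $f^{\{n\}}$, just makes explicit what the paper leaves to the preceding definition and theorem.
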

\begin{proof} 
By Theorem \ref{thm:spl} applied to $f^{\{n\}}_{p_1,\ldots,p_n}$, which is a slice regular self-map of $\mathbb B$ by assumption.
\end{proof}
We can rewrite this result in terms of pseudo-hyperbolic metric:
\begin{equation*}
\rho_{\mathbb{B}} \left(f^{\{n\}}_{p_1,\ldots, p_n}(p),f^{\{n\}}_{p_1,\ldots, p_n}\left(\widetilde{\mathcal{T}}_{(f^{\{n\}}_{p_1,\ldots, p_n}), p} (q)\right)\right) \leq \rho_{\mathbb{B}} (p, \mathcal{T}_p (q)),
\end{equation*}
 where $\mathcal{T}_p(q)=(1-qp)^{-1}q(1-qp)$ and
 \[ \widetilde{\mathcal{T}}_{(f^{\{n\}}_{p_1,\ldots, p_n}),p} (q) \assign h(q)^{-1} q h(q),\quad h(q)= 1-(f^{\{n\}}_{p_1,\ldots, p_n}(p)\ast (f^{\{n\}}_{p_1,\ldots, p_n})^c)(q). \]
Let us observe that in the case $q\in\mathbb R\cap\mathbb B=(-1,1)$ the maps $\mathcal{T}_p$ and $\widetilde{\mathcal{T}}_{(f^{\{n\}}_{p_1,\ldots, p_n}),p}$ act as identity on $q$ and we reduce to the classical Schwarz--Pick inequality in the pullback form:
\begin{equation*}
\rho_{\mathbb{B}} \left(f^{\{n\}}_{p_1,\ldots, p_n}(p),f^{\{n\}}_{p_1,\ldots, p_n}(q)\right) \leq \rho_{\mathbb{B}} (p, q).
\end{equation*}

\section{On Nevanlinna--Pick interpolation problem}\label{sec:np}

A result on Nevanlinna--Pick interpolation problem over the unit ball of $\mathbb B$ which is essentially analogous to the complex case has already been stated by Alpay, Bolotnikov, Colombo and Sabadini in \cite{alpaybolotnikovcolombosabadini}.

\begin{theorem}[{\cite[Thm. 1.3]{alpaybolotnikovcolombosabadini}}]\label{thm:np_quat_alpay}
Given $n$ distinct points $p_1,\ldots,p_n\in\mathbb B$ and $n$ values $s_1,\ldots,s_n\in{\mathbb B}$, there exists a function $f\in\mathcal{SR}(\mathbb B,{\mathbb B})$ such that $f(p_m)=s_m$ for all $m=1,\ldots, n$ if and only if the Hermitian matrix
\[
 P=\begin{pmatrix}
    \sum\limits_{\kappa=0}^{+\infty} p_m^\kappa (1-s_m \overline{s_\ell}) \overline{p_\ell}^\kappa
   \end{pmatrix}_{m,\ell=1}^n,
\]
namely the \emph{Pick matrix} associated to $\{p_i\}$ and $\{s_i\}$, is positive semidefinite.

\end{theorem}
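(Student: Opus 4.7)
The plan is to prove both implications using the quaternionic Hardy space $H^2(\mathbb{B})$, viewed as the right $\mathbb{H}$-Hilbert module of power series $g(q) = \sum_{\kappa=0}^{+\infty} q^\kappa a_\kappa$ with $\sum_\kappa |a_\kappa|^2 < +\infty$, whose reproducing kernel is $k_p(q) = (1 - q\overline{p})^{-\ast} = \sum_{\kappa=0}^{+\infty} q^\kappa \overline{p}^\kappa$. The fact that $\langle k_p, k_{p'}\rangle_{H^2}$ evaluates to exactly the kind of geometric-type sum appearing in each entry of the matrix $P$ is what makes the operator-theoretic route the natural one.

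For the necessity direction, suppose a slice regular interpolant $f$ with $f(\mathbb{B}) \subseteq \mathbb{B}$ exists. I would introduce the $\ast$-multiplication operator $M_f : g \mapsto f \ast g$ on $H^2(\mathbb{B})$, check that it is bounded of norm at most one, and compute how its adjoint $M_f^\ast$ acts on reproducing kernels; the expected formula, in this right-module setting, is $M_f^\ast k_p = k_p \overline{f(p)}$. The contractivity $I - M_f M_f^\ast \geq 0$ then yields positive semidefiniteness of the Gramian $\bigl(\langle (I - M_f M_f^\ast) k_{p_\ell}, k_{p_m}\rangle\bigr)_{m,\ell}$; respecting the right-module convention $\langle v\alpha, w\beta\rangle = \overline{\beta}\langle v, w\rangle\alpha$, one sees that this Gramian is precisely the matrix $P$ (up to transposition, which preserves positivity), giving the required condition.

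For the sufficiency direction, I would employ a Schur-type iterative reduction. Assuming $P$ positive semidefinite, the diagonal entry $P_{11} = (1-|s_1|^2)/(1-|p_1|^2)$ is nonnegative, hence $|s_1| \leq 1$; the boundary case $|s_1| = 1$ forces degeneracy of $P$ and hence a constant solution, so one may assume $|s_1| < 1$. Then replace the unknown interpolant $f$ by its hyperbolic difference quotient $f^\star_{p_1}$, turning the $n$-point problem into an $(n-1)$-point problem with nodes $p_2,\ldots,p_n$ and values $s_m' = f^\star_{p_1}(p_m)$. The key algebraic step is to verify that the reduced Pick matrix coincides, up to conjugation by an invertible factor, with the Schur complement of $P_{11}$ in $P$, so it inherits positive semidefiniteness; by induction on $n$ and the reconstruction $f = \mathcal{M}_{s_1}^{-1} \bullet (\mathcal{M}_{p_1} \ast f^\star_{p_1})$, one recovers an interpolant. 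The base case $n=1$ is handled by a direct application of Theorem~\ref{thm:spl}.

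The principal obstacle is noncommutativity. The $\ast$-product is not pointwise multiplication, so the value $f^\star_{p_1}(p_m)$ is not directly determined by $f(p_m) = s_m$ unless $p_1$ and $p_m$ share a complex slice; the identification of the reduced Pick matrix with a Schur complement then requires careful noncommutative book-keeping that does not parallel the complex case cleanly. This is precisely the difficulty that Section~\ref{sec:np} circumvents by restricting to real nodes. For the fully general statement, one therefore replaces the concrete Schur recursion with an abstract coisometric (or unitary) colligation realization, whose existence follows from a Parrott-type operator extension applied to an operator matrix built from $P$; this is the approach of Alpay, Bolotnikov, Colombo and Sabadini, and it is exactly why their argument yields less explicit formulas than the real-node algorithm developed below.
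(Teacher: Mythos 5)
A preliminary remark: the paper does not prove this statement at all --- it is imported verbatim from Alpay--Bolotnikov--Colombo--Sabadini and used as a black box (the introduction explicitly describes their argument as ``a different approach based on linear algebra and functional analysis techniques''). So there is no in-paper proof to compare yours against, and I can only judge the proposal on its own terms. Your necessity direction follows the right operator-theoretic route, but the stated adjoint formula is wrong in a way that matters. Writing $f(q)=\sum_\kappa q^\kappa a_\kappa$, $g(q)=\sum_\kappa q^\kappa b_\kappa$ and $\langle g,h\rangle=\sum_\kappa \overline{h_\kappa}\,b_\kappa$, one has $\langle f\ast g,k_p\rangle=(f\ast g)(p)=\sum_\ell p^\ell f(p)\,b_\ell$, so $M_f^\ast k_p$ is the series with coefficients $\overline{f(p)}\,\overline{p}^{\,\ell}$, \emph{not} $k_p\,\overline{f(p)}$, whose coefficients are $\overline{p}^{\,\ell}\,\overline{f(p)}$; the two differ unless $p$ and $f(p)$ commute. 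This is not cosmetic: the sandwiched entries $\sum_\kappa p_m^\kappa(1-s_m\overline{s_\ell})\overline{p_\ell}^{\,\kappa}$ of $P$ are produced exactly by the correct twisted adjoint, whereas your formula yields the different matrix with entries $\sum_\kappa p_m^\kappa\overline{p_\ell}^{\,\kappa}-s_m\bigl(\sum_\kappa p_m^\kappa\overline{p_\ell}^{\,\kappa}\bigr)\overline{s_\ell}$, i.e.\ you would prove positivity of the wrong object. Relatedly, ``check that $M_f$ is bounded of norm at most one'' hides a genuine theorem: since $|(f\ast g)(q)|=|f(q)|\,|g(\mathcal T_f(q))|$ involves the non-regular rotation $\mathcal T_f$, the contractivity of the $\ast$-multiplier for $f\in\mathcal{SR}(\mathbb B,\mathbb B)$ is one of the main results of the cited reference, not a routine verification.

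The sufficiency direction is where the proposal genuinely fails to be a proof. You correctly diagnose that the Schur/hyperbolic-difference-quotient recursion does not close up off the real line --- $f^\star_{p_1}(p_m)$ is not determined by the interpolation data because of the rotations $\widetilde{\mathcal T}$, which is precisely why Section \ref{sec:np} of the paper restricts to real nodes --- but you then resolve the difficulty by announcing that one should use ``the approach of Alpay, Bolotnikov, Colombo and Sabadini''. That is a citation of the theorem you set out to prove, not an argument: no colligation is constructed, no Parrott-type extension is formulated from $P$, and nothing is said about why a coisometric realization produces a \emph{slice regular} self-map of $\mathbb B$ hitting the prescribed values. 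As it stands, the necessity half is repairable once the adjoint is corrected and the multiplier theorem is either proved or properly quoted; the sufficiency half is missing.
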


\subsection{Two points Nevanlinna--Pick interpolation}\mbox{}

When $n=2$, in the complex case the Pick matrix is positive semidefinite if and only if the `Schwarz--Pick' condition holds:
\[
 \left|\dfrac{s_1-s_2}{1-s_1\overline{s_2}}\right|\leq  \left|\dfrac{p_1-p_2}{1-p_1\overline{p_2}}\right|.
\]
In the quaternionic framework we cannot expect an analogous result in general by the different expression of the Schwarz--Pick Lemma. However, if we assume that one of the points $p_1,p_2$ is \emph{real}, we can recover the same result.
\begin{proposition}\label{prop:np2}
 Let $r\in(-1,1)$, $p\in\mathbb B\setminus\{r\}$ and $s,q\in{\mathbb B}$. There exists a slice regular function $f\in\mathcal{SR}(\mathbb B,\mathbb B)$ such that $f(r)=s$ and $f(p)=q$ if and only if
 \begin{equation}\label{eq:nev2ineq}
  \rho_\mathbb B(q,s)\leq  \rho_\mathbb B(p,r).
 \end{equation}
\end{proposition}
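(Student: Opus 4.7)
For the necessity, I would apply Theorem \ref{thm:spl} to $f$ at the point $p$: for all $q' \in \mathbb B$,
\[
|(\mathcal M_{f(p)} \bullet f)(q')| \leq |\mathcal M_p(q')|,
\]
and then specialise at $q' = r$. The crucial simplification is that both the map $\mathcal T_p(q')=(1-q'p)^{-1}q'(1-q'p)$ appearing in $\mathcal M_p(q')$ and the map $\widetilde{\mathcal T}_{f,p}(q')=\varphi(q')^{-1}q'\varphi(q')$ (with $\varphi=(1-f \ast \overline{f(p)})^c$) appearing in $\mathcal M_{f(p)} \bullet f$ fix the real number $r$, since $r$ commutes with every quaternion. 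Therefore the Schwarz--Pick inequality collapses at $q'=r$ to $|M_q(s)| \leq |M_p(r)|$, which is exactly $\rho_\mathbb B(q,s) \leq \rho_\mathbb B(p,r)$.

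For the sufficiency I would invoke Theorem \ref{thm:np_quat_alpay} with $n=2$, $(p_1,p_2)=(r,p)$ and $(s_1,s_2)=(s,q)$, reducing the existence of $f$ to the positive semi-definiteness of the associated Pick matrix $P$ whose entries are $P_{m \ell} = \sum_\kappa p_m^\kappa (1 - s_m \overline{s_\ell}) \overline{p_\ell}^\kappa$. The central step is the explicit computation of these entries, which is made tractable precisely by the reality of $r$: since $r^\kappa$ lies in the centre of $\mathbb H$, it may be moved freely through the middle factor, and each geometric series sums to
\[
P_{11}=\frac{1-|s|^2}{1-r^2},\quad P_{22}=\frac{1-|q|^2}{1-|p|^2},\quad P_{12}=(1-s\overline{q})(1-r\overline{p})^{-1},\quad P_{21}=(1-rp)^{-1}(1-q\overline{s}).
\]

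Positive semi-definiteness of this Hermitian $2 \times 2$ quaternionic matrix is equivalent to the diagonal entries being non-negative (automatic here) together with $P_{11}P_{22}-P_{12}P_{21} \geq 0$. Since $1-rp$ and $1-r\overline{p}$ both lie in the commutative slice through the imaginary unit of $p$ and satisfy $(1-rp)(1-r\overline{p})=|1-rp|^2 \in \mathbb R$, while $(1-s\overline{q})(1-q\overline{s})=|1-q\overline{s}|^2 \in \mathbb R$, the product $P_{12}P_{21}$ simplifies to the real number $|1-q\overline{s}|^2/|1-rp|^2$. Combining this with the pair of identities $(1-|s|^2)(1-|q|^2)=|1-q\overline{s}|^2-|q-s|^2$ and $(1-r^2)(1-|p|^2)=|1-rp|^2-|p-r|^2$, the condition $\det P \geq 0$ becomes equivalent to $\rho_\mathbb B(q,s) \leq \rho_\mathbb B(p,r)$. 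The main obstacle is precisely the Pick-matrix computation: the collapse of the geometric series crucially uses that $r$ is central, which is also why the authors' interpolation algorithm in Section \ref{sec:np} is restricted to real nodes.
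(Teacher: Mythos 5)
Your proposal is correct and follows essentially the same route as the paper: necessity by specialising the quaternionic Schwarz--Pick inequality at the real point $r$, where the twisting maps $\mathcal T_p$ and $\widetilde{\mathcal T}_{f,p}$ act trivially, and sufficiency by computing the $2\times 2$ Pick matrix of Theorem \ref{thm:np_quat_alpay} explicitly (the reality of $r$ collapsing the geometric series) and checking that $\det P\geq 0$ is equivalent to $\rho_{\mathbb B}(q,s)\leq\rho_{\mathbb B}(p,r)$. The only cosmetic difference is that the paper phrases the determinant condition via the identity $1-\rho_{\mathbb B}(q,s)^2=(1-|s|^2)(1-|q|^2)|1-s\overline q|^{-2}$ rather than via $P_{12}P_{21}=|P_{12}|^2$, which amounts to the same computation.
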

 \begin{proof} 
  If there exists a function $f$ as required, then by \eqref{eq3prho} we recall that
  \[
\rho_{\mathbb{B}}(f(p), f(\widetilde{\mathcal T}_{f,p}(r)))\leq \rho_{\mathbb B}(p,\mathcal{T}_p(r)),
  \]
but $\widetilde{\mathcal T}_{f,p}(r)=\mathcal{T}_p(r)=r$ since $r$ is real: thus $ \rho_{\mathbb B} (q,s)=\rho_{\mathbb{B}}(f(p), f(r))\leq \rho_{\mathbb B}(p,r)$.
  
  Vice versa, let us assume $ \rho_\mathbb B(q,s)\leq  \rho_\mathbb B(p,r)$. Then, by noticing that
  \[
   1-\rho_{\mathbb B}(q,s)^2=1-|1-s \overline{q}|^{-2}|s-q|^2=|1-s\overline{q}|^{-2}(1+|s|^2|q|^2-|s|^2-|q|^2)=\dfrac{(1-|s|^2)(1-|q|^2)}{|1-s\overline{q}|^2},
  \]
  our assumption is equivalent to say that
  \begin{equation}\label{eq:sp_int1}
    \dfrac{(1-|s|^2)(1-|q|^2)}{|1-s\overline{q}|^2}\geq \dfrac{(1-|r|^2)(1-|p|^2)}{|1-r \overline p|^2}.
  \end{equation}
If we take the two pairs $(r,p)$ and $(s,q)$, the associated Pick matrix $P$ is such that
\begin{align*}
 P_{1,1} &=\sum\limits_{\kappa=0}^{+\infty} (1-|s|^2)r^\kappa \overline r^\kappa=(1-|s|^2)(1-|r|^2)^{-1}\\
 P_{2,2} &=\sum\limits_{\kappa=0}^{+\infty}(1-|q|^2)p^\kappa \overline p^\kappa=(1-|q|^2)(1-|p|^2)^{-1}\\
 P_{1,2} &=\sum\limits_{\kappa=0}^{+\infty}(1-s\overline{q})r^\kappa \overline p^\kappa=(1-s\overline{q})(1-r\overline p)^{-1}\\
 P_{2,1} &=\overline{P_{1,2}}=(1- p r)^{-1}(1-q\overline s).
\end{align*}
Clearly $P_{1,1}\geq 0$, so the matrix is positive semidefinite if and only if $P_{1,1}P_{2,2}-|P_{1,2}|^2\geq 0$, which is equivalent to \eqref{eq:sp_int1}. 
 \end{proof}

 Now let's assume that both $r$ and $p$ are real and $P\geq 0$. In this particular setting we can reproduce the argument of the complex case (see \cite{BaribeauRivardWegert} for instance) to find the solutions of the interpolation problem.
 
 \begin{proposition}
 Let $r,p \in(-1,1)$, $r\ne p$ and $s,q\in{\mathbb B}$, such that the Pick matrix associated to the pairs $(r,p)$ and $(s,q)$ is positive semidefinite, or equivalently \eqref{eq:nev2ineq} holds. If the Pick matrix is singular, then $Q\assign M_{r}(p)^{-1}M_{s}(q)$ is unimodular and the interpolation problem
 \[
f(r)=s,\quad f(p)=q,\qquad f\in\mathcal{SR}(\mathbb B,\mathbb B)
 \]
has a unique solution given by the M\"obius transformation
\begin{equation*}
 {f=\mathcal M_{-s}\bullet (\mathcal M_r\ast Q).}
\end{equation*}
Otherwise, if $P>0$, then $|Q|<1$ and the interpolation problem has infinite solutions parameterized by the space $\mathcal {SR}(\mathbb B,\overline{\mathbb B})=\mathcal{SR}(\mathbb B,\mathbb B)\cup\partial\mathbb B$ \footnote{This follows from the Maximum Modulus Principle in the quaternionic setting, see \cite[Thm. 7.1]{librospringer2}.}:
\begin{equation*}
 {f=\mathcal M_{-s}\bullet(\mathcal M_r\ast (\mathcal M_{-Q}\bullet (\mathcal M_p \ast h)))}\qquad (h\in\mathcal{SR}(\mathbb B,\mathbb B)\cup\partial\mathbb B)
\end{equation*}
\end{proposition}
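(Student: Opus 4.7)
The plan is to identify the quaternion $Q$ with the value at $p$ of the hyperbolic difference quotient $f^\star_r$ of any prospective solution $f$, and then to invert the construction of $f^\star_r$ to produce all admissible $f$. The reality of both interpolation nodes plays a crucial role: since $r\in\mathbb{R}$, the function $\mathcal{M}_r$ is slice preserving, so its $\ast$-inverse coincides with the ordinary reciprocal $M_r^{-1}$; and the twisting maps $\mathcal{T}$ and $\widetilde{\mathcal{T}}_{f,r}$ that appear when one evaluates $\ast$-products or the $\bullet$-action fix every real point. These remarks combine to give, whenever $f(r)=s$ and $f(p)=q$, the identity
\[
f^\star_r(p) \;=\; \bigl(\mathcal{M}_r^{-\ast}\ast(\mathcal{M}_s\bullet f)\bigr)(p) \;=\; M_r(p)^{-1}\,M_s(q) \;=\; Q.
\]
Theorem~\ref{thm:spl} applied to $f^\star_r$ then forces $|Q|\leq 1$, with $|Q|=1$ occurring only if $f^\star_r$ is a unimodular constant, in which case $f$ is itself a regular M\"obius transformation.

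Next I would establish the dichotomy. From the entries of the Pick matrix computed in the proof of Proposition~\ref{prop:np2} one sees that $\det P=0$ is equivalent to $\rho_{\mathbb{B}}(s,q)=\rho_{\mathbb{B}}(r,p)$, which, since $|Q|=\rho_{\mathbb{B}}(s,q)/\rho_{\mathbb{B}}(r,p)$, is exactly the case $|Q|=1$. In that singular case, the Maximum Modulus Principle applied to $f^\star_r$ pins down $f^\star_r\equiv Q$; inverting the defining identity $f^\star_r=\mathcal{M}_r^{-\ast}\ast(\mathcal{M}_s\bullet f)$ then yields exactly $f=\mathcal{M}_{-s}\bullet(\mathcal{M}_r\ast Q)$. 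I would verify that this satisfies $f(r)=s$ and $f(p)=q$ by direct evaluation, using $\mathcal{M}_r(r)=0$, $M_{-s}(0)=s$, slice preservation of $\mathcal{M}_r$, and the very definition of $Q$.

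In the non-singular case $|Q|<1$, the freedom in $f$ is carried entirely by the freedom in $h:=f^\star_r$, a slice regular self-map of $\mathbb{B}$ constrained only by $h(p)=Q$. Applying the same inversion procedure a second time, now at the real node $p$ with value $Q$, every admissible $h$ is uniquely of the form $h=\mathcal{M}_{-Q}\bullet(\mathcal{M}_p\ast g)$ where $g$ ranges over $\mathcal{SR}(\mathbb{B},\mathbb{B})\cup\partial\mathbb{B}$ (the extra boundary constants correspond to the equality case in Proposition~\ref{prop:splzero}, handled by the Maximum Modulus Principle). Substituting into $f=\mathcal{M}_{-s}\bullet(\mathcal{M}_r\ast h)$ produces the boxed parameterization.

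The main obstacle will be the careful bookkeeping of the noncommutative $\ast$-product and the $\bullet$-action throughout the substitutions: one must verify at each stage that the intermediate expressions are slice regular functions with values in $\overline{\mathbb{B}}$, and that the final formula truly returns $s$ at $r$ and $q$ at $p$. The reality of the nodes is indispensable here, since without it the evaluation identity $(\mathcal{M}_a\bullet F)(b)=M_a(F(b))$ would fail, and with it the identification $Q=f^\star_r(p)$ on which the whole construction rests.
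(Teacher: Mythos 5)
Your proposal is correct and follows essentially the same route as the paper: identify $Q=f^\star_r(p)$ via the reality of the nodes, use the Maximum Modulus Principle to force $f^\star_r\equiv Q$ in the singular case, and invert the hyperbolic difference quotient construction (once or twice) to obtain the boxed formulas, confirming the interpolation conditions by direct evaluation. The only cosmetic difference is that the paper first invokes the quaternionic Nevanlinna--Pick theorem of Alpay--Bolotnikov--Colombo--Sabadini to guarantee a solution exists, whereas your argument makes existence follow from the explicit construction itself; both are sound.
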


\begin{proof} 
By hypothesis $P\geq 0$, hence by Theorem \ref{thm:np_quat_alpay}  there exists a solution in $\mathcal{SR}(\mathbb B,\mathbb B)$ for the interpolation problem $f(r)=s$ and $f(p)=q$. But since we assumed that $r$ and $p$ are real, then $f^{\star}_r(p)=(\mathcal M_{r}^{-\ast}\ast(\mathcal M_{f(r)}\bullet f))(p)$ can be computed as $M_{r}(p)^{-1}M_{f(r)}(f(p))=M_r(p)^{-1}M_s(q)=Q$.

Consider first the singular case, $P\geq 0$ and $\det P=0$: in that case $\rho_\mathbb B(s,q)=\rho_\mathbb B (r,p)$, so $f^{\star}_r(p)=Q\in\partial\mathbb B$, thus $ f^{\star}_r\equiv Q$ and $f^{\{2\}}_{r,p}\equiv Q$; therefore $f$ must be a M\"obius transformation for which $\mathcal M_r^{-\ast}\ast (\mathcal M_s\bullet f)= Q$. It is easy to check that $
  f=\mathcal M_{-s}\bullet (\mathcal M_r\ast Q) $
is the solution, as $f(r)=M_{-s}(0)=s$ and $f(p)=M_{-s}(M_r(p)Q)=M_{-s}(M_{s}(q))=q$.

In the non-singular case $P>0$, we have that $g\assign f^{\star}_r:\mathbb B\lto \mathbb B$ and sends $p$ to $Q\in\mathbb B$. Then $f^{\{2\}}_{r,p}=g^{\star}_p$ could be either a unimodular constant or a slice regular self-map of $\mathbb B$. It is straightforward to see that the problem
\begin{equation*}
 \begin{cases}
g^{\star}_p=\mathcal M_p^{-\ast}\ast(\mathcal M_{g(p)}\bullet g)=h\\
g(p)=Q
 \end{cases}
\end{equation*}
has the unique slice regular solution $g=\mathcal M_{-Q}\bullet (\mathcal M_p \ast h)$ for any $h\in\mathcal{SR}(\mathbb B,\mathbb B)\cup\partial \mathbb B$; once we fix such a function $h$, we can find $f$ so that
\begin{equation*}
 \begin{cases}
f^{\star}_r=g=\mathcal M_{-Q}\bullet (\mathcal M_p \ast h)\\
f(r)=s
 \end{cases}
\end{equation*}
in the same way, getting $f=\mathcal M_{-s}\bullet(\mathcal M_r\ast g)=\mathcal M_{-s}\bullet(\mathcal M_r\ast (\mathcal M_{-Q}\bullet (\mathcal M_p \ast h)))$. By direct computation, $f(p)=M_{-s}(M_r(p)g(p))=M_{-s}(M_r(p)Q)=M_{-s}(M_s(q))=q$.
 \end{proof}

\begin{example}
 Let $\lambda,\mu\in(-1,1)$. We want to find conditions on these parameters such that there exists a slice regular map $f$ from $\mathbb B$ to itself for which $f\left(-\frac{1}{2}\right)=\lambda i$ and $f\left(\frac{1}{2}\right)=\mu j$. Then it is enough to compute
 \[
Q=M_{-\frac{1}{2}}\left(\frac{1}{2}\right)^{-1}\cdot M_{\lambda i}(\mu j)=\frac{5}{4}(1-\lambda \mu k)^{-1}(\mu j -\lambda i) \implies |Q|^2=\frac{25}{16}\cdot\frac{\mu^2+\lambda^2}{1+\lambda^2\mu^2}.
 \]
 So the condition for the existence of a solution for the interpolation problem $|Q|\leq 1$ in this case becomes $(25-16\lambda^2)\mu^2\leq 16-25 \lambda^2$. In particular, if we fix $\lambda$ so that $1>|\lambda|>\frac{4}{5}$, we have no solutions for any choice of $\mu$; if otherwise $\lambda$ is fixed so that $0\leq |\lambda|\leq\frac{4}{5}$, then we have solutions once
 \[
 |\mu|\leq \sqrt{\frac{(4+5\lambda)(4-5\lambda)}{(5+4\lambda)(5-4\lambda)}}.
 \]
\end{example}
When equality holds ($|Q|=1$), we have the extremal case of a unique (M\"obius) solution, otherwise we have infinite solutions parameterized by $\mathcal{SR}(\mathbb B,{\mathbb B})\cup \partial \mathbb B$.

For instance, let $\lambda=\frac{1}{4}$. Then we have solutions if and only if $|\mu|\leq\sqrt{\frac{77}{128}}$. When the inequality is strict, solutions are given (implicitly) by the parametric formula
\[
f=\mathcal M_{-\frac{1}{4}i}\bullet(\mathcal M_{-\frac{1}{2}}\ast (\mathcal M_{-Q}\bullet (\mathcal M_{\frac{1}{2}} \ast h))),\quad Q=\frac{5}{4}(4-\mu k)^{-1}(-i+4\mu j),\quad h\in\mathcal{SR}(\mathbb B,{\mathbb B})\cup\partial \mathbb B.
\]
When $\mu=\pm\sqrt{\frac{77}{128}}$, the unique solution is $f=\mathcal M_{-\frac{1}{4}i}\bullet(\mathcal M_{-\frac{1}{2}}\ast Q)$.

\subsection{Nevanlinna--Pick interpolation with three points}\mbox{}

For the case $n\geq 3$, in the complex setting, a result which was stated by Beardon and Minda \cite[Thm. 7.1]{BeardonMinda04} in the case $n=3$ and then generalized by Baribeau, Rivard and Wegert \cite[Thm. 4.1--4.2]{BaribeauRivardWegert} gives a more geometric set of conditions which are equivalent to the existence of solutions for the Nevanlinna--Pick interpolation problem. 
If we assume that the nodes in the domain are \emph{real}, we get exactly the same conditions also in our framework.

For the sake of simplicity, we first see the case $n=3$ before going to the general picture.

\begin{theorem}\label{thm:3point-nev-pick}
 Let $r_1,r_2,r_3\in(-1,1)$ be distinct and let $s_1,s_2,s_3\in\mathbb B$. Consider the interpolation problem
 \[
  f(r_m)=s_m\quad\text{for all }m=1,2,3,\qquad f\in\mathcal{SR(\mathbb B,\mathbb B)}.
 \]
 Let us define
 \begin{align*}
  Q_1^2 &\assign M_{r_1}(r_2)^{-1}M_{s_1}(s_2)\\
  Q_1^3 &\assign M_{r_1}(r_3)^{-1}M_{s_1}(s_3)
 \end{align*}
and \footnote{Here $\infty$ is just a conventional definition, it does not correspond to a pole of the ratio. For our purposes, we could have chosen any quaternion of norm $>1$. The other cases correspond to having as $Q_1^2,Q_1^3$ two different points on $\partial\mathbb B$, or a point inside $\mathbb B$ and the other one outside, or at least one of the two points not in $\overline{\mathbb B}$.}
\begin{equation*}
 Q_2^3 \assign 
    \begin{cases}
    M_{r_2}(r_3)^{-1}M_{Q_1^2}(Q_1^3) &\text{ if }Q_1^2,Q_1^3\in\mathbb B,\\
    Q_1^2 &\text{ if }Q_1^2=Q_1^3\in\partial \mathbb B,\\
    \infty &\text{ otherwise}.
    \end{cases} 
\end{equation*}
\begin{itemize}
\item If $|Q_2^3|<1$, we have an infinite family of solutions \emph{(non-singular case)} given by
\begin{equation}\label{eq:solution3_1}
 {\mathcal M_{-s_1}\bullet (\mathcal{M}_{r_1}\ast(\mathcal{M}_{-Q_1^2}\bullet(\mathcal{M}_{r_2}\ast(\mathcal{M}_{-Q_2^3}\bullet(\mathcal{M}_{r_3}\ast h)))))}
\end{equation}
where $h$ is a function in $\mathcal{SR}(\mathbb B,{\mathbb B})$ or a unimodular constant. In the last situation, the solution is a regular Blaschke product of degree three.
\item If $Q_2^3\in\partial\mathbb B$, there are two cases.
\begin{enumeratealpha}
	\item If $Q_1^2,Q_1^3$ both belong to $\mathbb B$, there exists a unique solution \emph{(singular case)} given by the regular Blaschke product of degree two
	\begin{equation}\label{eq:solution3_2}
		{\mathcal{M}_{-s_1}\bullet(\mathcal M_{r_1}\ast(\mathcal{M}_{-Q_1^2}\bullet(\mathcal M_{r_2}\ast Q_2^3)))}
	\end{equation}
	\item When $Q_1^2=Q_1^3\in\partial\mathbb B$, the solution is given by the regular M\"obius map
	\begin{equation}\label{eq:solution3_3}
		{\mathcal{M}_{-s_1}\bullet(\mathcal{M}_{r_1}\ast Q_1^2)}
	\end{equation}
\end{enumeratealpha}
\item Otherwise, if $Q_2^3$ has modulus strictly greater than one, the interpolation problem has no solutions.
\end{itemize}
\end{theorem}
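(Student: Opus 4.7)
The plan is to bootstrap from the two-point case just proved, iterating the hyperbolic difference quotient. The essential simplification afforded by the reality of the nodes is that for any slice regular $F\colon\mathbb{B}\to\overline{\mathbb{B}}$ and real $r,r'\in(-1,1)$, the twisting diffeomorphisms appearing in the evaluation of $\ast$-products act as the identity, so that $F^\star_{r}(r')=M_{r}(r')^{-1}M_{F(r)}(F(r'))$ reduces to the classical Möbius combination. I would record this identity as a short preliminary lemma and apply it systematically at every level of the iteration.

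\emph{Necessity.} Suppose first that a solution $f$ exists and define $g_1:=f^\star_{r_1}$. The real-node identity above gives $g_1(r_2)=Q_1^2$ and $g_1(r_3)=Q_1^3$, and the bound $|f^\star_p|\leq 1$ on $\mathbb{B}$ (from the three-points Schwarz--Pick Lemma) forces $|Q_1^2|,|Q_1^3|\leq 1$. Three sub-cases arise. If both $Q_1^2,Q_1^3\in\mathbb{B}$, then $g_1\in\mathcal{SR}(\mathbb{B},\mathbb{B})$ (by the Maximum Modulus Principle, as $g_1$ is not a unimodular constant), so I can iterate: $g_2:=(g_1)^\star_{r_2}$ satisfies $g_2(r_3)=M_{r_2}(r_3)^{-1}M_{Q_1^2}(Q_1^3)=Q_2^3$ with $|Q_2^3|\leq 1$ by Proposition~\ref{prop:splzero}. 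If $Q_1^2=Q_1^3\in\partial\mathbb{B}$, then $g_1$ attains modulus one at an interior point, hence $g_1\equiv Q_1^2$ and solving $f^\star_{r_1}\equiv Q_1^2$ with $f(r_1)=s_1$ yields $f=\mathcal{M}_{-s_1}\bullet(\mathcal{M}_{r_1}\ast Q_1^2)$. Any other configuration (distinct boundary values, or some $|Q_1^m|>1$) is incompatible with $g_1\in\mathcal{SR}(\mathbb{B},\overline{\mathbb{B}})$, so no solution can exist, giving $|Q_2^3|>1$ in the convention.

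\emph{Sufficiency and uniqueness.} Given $|Q_2^3|\leq 1$, I would produce solutions by reversing the two hyperbolic-quotient operations one at a time, each reversal being exactly the two-point interpolation step. Concretely, pick $h\in\mathcal{SR}(\mathbb{B},\overline{\mathbb{B}})$ freely in the non-singular case and assemble $g_2:=\mathcal{M}_{-Q_2^3}\bullet(\mathcal{M}_{r_3}\ast h)$, or set $g_2\equiv Q_2^3$ directly in the singular case $|Q_2^3|=1$; then $g_1:=\mathcal{M}_{-Q_1^2}\bullet(\mathcal{M}_{r_2}\ast g_2)$, and finally $f:=\mathcal{M}_{-s_1}\bullet(\mathcal{M}_{r_1}\ast g_1)$. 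Verification of $f(r_k)=s_k$ for $k=1,2,3$ is a direct evaluation at real points: the factor $\mathcal{M}_{r_k}\ast(\cdot)$ vanishes at $q=r_k$, collapsing the innermost wrapper, and the outer composition telescopes via $M_{-a}\circ M_{a}=\tmop{id}$ together with the defining identities $M_{r_k}(r_k)\cdot Q_1^k=M_{s_1}(s_k)$ and $M_{r_2}(r_3)\cdot Q_2^3=M_{Q_1^2}(Q_1^3)$. In the singular case $|Q_2^3|=1$, the constancy of $g_2$ is forced by the Maximum Modulus Principle, whence $f$ is uniquely determined as the regular Blaschke product of degree two given by~\eqref{eq:solution3_2}; similarly, in the degenerate sub-case $Q_1^2=Q_1^3\in\partial\mathbb{B}$, formula~\eqref{eq:solution3_3} yields the unique Möbius solution.

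\emph{Main obstacle.} The delicate technical point is justifying the real-node simplification at \emph{each} level of iteration, since the iterated hyperbolic quotients involve $\ast$-products with non-real interior constants $Q_1^2$ and $Q_2^3$. I would package this as an inductive lemma on the depth of iteration, the base case being the single-level identity recorded above; this is ultimately what legitimises identifying the recursively defined $Q_\kappa^\ell$ with the values $f^{\{\kappa\}}_{r_1,\ldots,r_\kappa}(r_\ell)$ of any putative solution, and what makes the closed-form constructive formulas~\eqref{eq:solution3_1}--\eqref{eq:solution3_3} transparent.
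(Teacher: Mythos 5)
Your proposal is correct and follows essentially the same route as the paper: establish the identities $Q_1^m=f^\star_{r_1}(r_m)$ and $Q_2^3=f^{\{2\}}_{r_1,r_2}(r_3)$ for any putative solution via the real-node simplification of $\ast$-product evaluation, deduce necessity of $|Q_2^3|\le 1$, and then construct solutions by reversing the hyperbolic difference quotients step by step, with the Maximum Modulus Principle forcing uniqueness in the singular cases. The inductive packaging of the real-node identity that you flag as the main obstacle is exactly the computation the paper carries out level by level.
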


\begin{proof} 
We first observe that if we have a (slice regular) solution $f:\mathbb B\lto \mathbb B$ of the interpolation problem $f(r_m)=s_m, m=1,2,3$, then
\begin{equation}\label{eq:Q_interp}
Q_1^2 = f^{\star}_{r_1}(r_2),\quad Q_1^3=f^{\star}_{r_1}(r_3),\quad Q_2^3=f^{\{2\}}_{r_1,r_2}(r_3).
\end{equation}
Indeed $f^{\star}_{r_1}\in\mathcal{SR}(\mathbb B,{\mathbb B})\cup\partial \mathbb B$, hence $\overline{\mathbb B}\ni f^{\star}_{r_1}(r_m)=\mathcal M_{r_1}^{-\ast}\ast(\mathcal{M}_{s_1}\bullet f)\mid_{r_m}$, $m=2,3$: since $r_m$ is real, the last term is equal to $\mathcal M_{r_1}(r_m)^{-1} (\mathcal M_{s_1}\bullet f)(r_m)=M_{r_1}(r_m)^{-1} M_{s_1}(f(r_m))=M_{r_1}(r_m)^{-1} M_{s_1}(s_m)=Q_1^m$ for $m=2,3$. If $f$ is a M\"obius transformation, then $f^{\star}_{r_1}=f^{\{2\}}_{r_1,r_2}\equiv \eta\in\partial \mathbb B$, so we have also $Q_1^2=Q_1^3=Q_2^3=\eta$. Otherwise, $f^{\star}_{r_1}$ has values in the open unit ball and $f^{\{2\}}_{r_1,r_2}(r_3)=(f^{\star}_{r_1})^{\star}_{r_2}(r_3)$, which as above can be computed as $M_{r_2}^{-1}(r_3) M_{f^{\star}_{r_1}(r_2)}(f^{\star}_{r_1}(r_3))=M_{r_2}^{-1}(r_3) M_{Q_1^2}(Q_1^3)=Q_2^3$.

This claim shows that we cannot have solutions in the case $Q_2^3$ has modulus strictly greater than one.

Let us assume $|Q_2^3|<1$. This implies by its definition that also $Q_1^2,Q_1^3$ lie within the open unit ball. Hence by \eqref{eq:Q_interp}, if $f$ exists, then both $f^\star_{r_1}$ and $f^{\{2\}}_{r_1,r_2}$ are slice regular maps from $\mathbb B$ to itself. Moreover we can define $f^{\{3\}}_{r_1,r_2,r_3}\in\mathcal{SR(\mathbb B,\mathbb B)}\cup\partial\mathbb B$.  Now, let's take $h$ arbitrarily in this latter set of functions and seek for a function $g_1$ whose hyperbolic difference quotient at $r_3$ is $h$ and attains the value $Q_2^3$ at $r_3$, i.e., the solution of
\begin{equation*}\begin{cases}
		(g_1)^{\star}_{r_3}=h\\ 
		g_1(r_3)=Q_2^3\end{cases}.
\end{equation*}
But $(g_1)^{\star}_{r_3}=h\iff \mathcal M_{g_1(r_3)}\bullet g_1=\mathcal M_{r_3}\ast h$, hence the problem above has a unique (slice regular) solution given by $g_1=\mathcal{M}_{-Q_2^3}\bullet(\mathcal{M}_{r_3}\ast h)$. Now we proceed backwards, finding $g_2$ such that
\begin{equation*}\begin{cases}
		(g_2)^{\star}_{r_2}=g_1\\ g_2(r_2)=Q_1^2\end{cases}\implies g_2=\mathcal{M}_{-Q_1^2}\bullet (\mathcal M_{r_2}\ast g_1).
\end{equation*}
By \eqref{eq:Q_interp} we should also have $g_2(r_3)=Q_1^3$, but evaluating the solution above at $r_3$ one directly gets $g_2(r_3)=M_{-Q_1^2}(M_{r_2}(r_3)\cdot Q_2^3)=M_{-Q_1^2}(M_{Q_1^2}(Q_1^3))=Q_1^3$. It remains to find $g_3$ such that
\begin{equation*}\begin{cases}
		(g_3)^{\star}_{r_1}=g_2\\ g_3(r_1)=s_1\end{cases}\implies g_3=\mathcal{M}_{-s_1}\bullet (\mathcal M_{r_1}\ast g_2).
\end{equation*}
In the end, we get a function $f=g_3$, depending on the choice of $h$, which is slice regular from $\mathbb B$ to itself and satisfies $f(r_1)=s_1$; moreover, by direct evaluation, for $m=2,3$ it is $f(r_m)=M_{-s_1}(M_{r_1}(r_m)\cdot g_2(r_m))=M_{-s_1}(M_{r_1}(r_m)\cdot Q_1^m)=M_{-s_1}(M_{s_1}(s_m))=s_m$. Henceforth, it satisfies the interpolation conditions.

We can summarize this algorithm by the following scheme (we indicate with bold symbols the conditions we impose at each step, the others follow automatically):
{%
	\newcommand{\mc}[3]{\multicolumn{#1}{#2}{#3}}
	\begin{center}
		\begin{tabular}{cccl}
			$r_1$ & $r_2$ & $r_3$ & \\\cline{1-3}
			$\boldsymbol{s_1}$ & $s_2$ & \mc{1}{c|}{$s_3$} & $f=g_3$\\
			& $\boldsymbol{Q_1^2}$ & \mc{1}{c|}{$Q_1^3$} & $f^{\star}_{r_1}=g_2$\\
			&  & \mc{1}{c|}{$\boldsymbol{Q_2^3}$} & $f^{\{2\}}_{r_1,r_2}=g_1$\\
			&  & \mc{1}{c|}{} & $f^{\{3\}}_{r_1,r_2,r_3}=h$
		\end{tabular}
	\end{center}
}%

Now suppose $|Q_2^3|=1$, i.e., it is a point in the unit sphere $\partial\mathbb B$. There are two cases:
\begin{enumeratealpha}
	\item $Q_1^2$, $Q_1^3$ both lie in $\mathbb B$: if $f$ exists, by \eqref{eq:Q_interp} it is $f^\star_{r_1}\in\mathcal{SR}(\mathbb B,\mathbb B)$, while $f^{\{2\}}_{r_1,r_2}$ is constant and equal to $Q_2^3$. So this time we start by finding the slice regular function whose hyperbolic difference quotient at $r_{2}$ is $Q_2^3$ and assumes the value $Q_1^2$ at $r_2$, namely $\widetilde{g}=\mathcal M_{-Q_1^2}\bullet(\mathcal M_{r_2}\ast Q_2^3)$: by direct evaluation and since in our case $Q_2^3=M_{r_2}(r_3)$ it also satisfies $\widetilde{g}(r_3)=Q_1^3$. Since it is the result of a M\"obius transformation acting on another M\"obius transformation by $\bullet$, the map $\widetilde g$ is M\"obius too. Finally we find $f$ for which $f^\star_{r_1}=\widetilde{g}$ and $f(r_1)=s_1$, that is to say the second-degree regular Blaschke product $f=\mathcal M_{-s_1}\bullet(\mathcal M_{r_1}\ast \widetilde g)$, which also satisfies $f(r_2)=s_2$ and $f(r_3)=s_3$.
	\item $Q_1^2=Q_1^3\in\partial\mathbb B$. In this latter case, only the last step of the algorithm is needed, since $f^\star_{r_1}$ should be the unimodular constant $u=Q_1^2=Q_1^3$. Then $f=\mathcal M_{-s_1}\bullet (\mathcal M_{r_1}\ast u)$, which is now a M\"obius transformation, does the job by the same arguments seen above.\qedhere
\end{enumeratealpha}

\end{proof}

\begin{remark}
	We observe that the terms $Q_\kappa^\ell$ of the algorithm depend on the order we fix for the nodes of interpolation $r_1,r_2,r_3$, but the problem itself should not depend on that order. Hence, although $Q_{2}^3$ could change, the property of being inside, outside or in the boundary of the unit ball $\mathbb B$ is preserved. Moreover, a permutation of nodes produces a different expression for the family of solutions \eqref{eq:solution3_1} in the non-singular case and the unique solution \eqref{eq:solution3_2}, or \eqref{eq:solution3_3}, in the singular one: this is not a contradiction as those representations in terms of $\ast$-products and $\bullet$-actions are not unique.
\end{remark}
\begin{example}
 Let $\lambda,\mu\in(-1,1)$. For which value of these parameters there exists a slice regular map $f$ from $\mathbb B$ to itself for which $f(0)=0$, $f\left(-\frac{1}{2}\right)=\lambda i$ and $f\left(\frac{1}{2}\right)=\mu j$? 
 
 As seen before, we can solve this problem by computing:
 \begin{align*}
  Q_1^2 &=M_0\left(-\frac{1}{2}\right)^{-1}\cdot M_0(\lambda i)=-2\lambda i\\
  Q_1^3 &=M_0\left(\frac{1}{2}\right)^{-1}\cdot M_0(\mu j)=2\mu j\\
  Q_2^3 &=\begin{cases} M_{-\frac{1}{2}}\left(\frac{1}{2}\right)^{-1}\cdot M_{Q_1^2}(Q_1^3)=\frac{5}{2}(1+4\lambda \mu k)^{-1}(\lambda i+\mu j) \quad\text{if }|\lambda|,|\mu|< \frac{1}{2}\\ \infty\quad\text{otherwise}.\end{cases}
 \end{align*}
 The case $Q_2^3=Q_1^2\in\partial\mathbb B$ never happens since $Q_1^3$ is different from $Q_1^2$ for any choice of $\lambda$ and $\mu$, unless $\lambda=\mu=0$, but then they are equal to $0$ and they're not in the boundary of $\mathbb B$.
 
Now $|Q_2^3|^2=\frac{25}{4}\cdot \frac{\lambda^2+\mu^2}{1+16\lambda^2\mu^2}$. So $|Q_2^3|\leq1$ if and only if $(25-64\lambda^2)\mu^2\leq 4-25 \lambda^2$ for $\lambda,\mu\in(-\frac{1}{2},\frac{1}{2})$. Suppose $\lambda$ is fixed so that $\frac{1}{2}>|\lambda|>\frac{2}{5}$: then we have no solutions for any choice of $\mu$; if otherwise we fix $\lambda$ such that $0\leq |\lambda|\leq\frac{2}{5}$, then $25-64\lambda^2>0$ and we conclude that the interpolation problem has solutions for
 \[
 |\mu|\leq \sqrt{\frac{(2+5\lambda)(2-5\lambda)}{(5+8\lambda)(5-8\lambda)}}.
 \]
\end{example}
The problem is singular when the inequalities above are equalities. 

For instance, if $\lambda=1/4$, then we can find solutions once $|\mu|\leq\sqrt{\frac{13}{112}}=\frac{1}{4}\sqrt{\frac{13}{7}}$. If the strict inequality holds, there is a family of solutions of the form
\begin{equation*}
q\ast(\mathcal{M}_{\frac{i}{2}}\bullet(\mathcal{M}_{-\frac{1}{2}}\ast(\mathcal{M}_{-Q_2^3}\bullet(\mathcal{M}_{\frac{1}{2}}\ast h)))), \quad Q_2^3=\frac{5}{2}(1+\mu k)^{-1}\cdot\left(\frac{1}{4} i+\mu j\right),\quad h\in\mathcal{SR}(\mathbb B,{\mathbb B})\cup \partial\mathbb B,
\end{equation*}
while in the singular cases $\mu=\pm\sqrt{\frac{13}{112}}$ the unique solution is the regular Blaschke product of degree two $q\ast(\mathcal{M}_{\frac{i}{2}}\bullet(\mathcal{M}_{-\frac{1}{2}}\ast Q_2^3))$.
\begin{corollary}\label{cor:np_dist}
 Let $r_1,r_2,r_3$ be pairwise distinct points in $(-1,1)$ and let $s_1,s_2,s_3\in\mathbb B$. The interpolation problem $f(r_m)=s_m, m=1,2,3$ with $f\in\mathcal{SR}(\mathbb B,\mathbb B)$ has infinite solutions if and only if
 \begin{equation}\label{eq:1np3}
  \rho_\mathbb B(s_1,s_m)<\rho_\mathbb B(r_1,r_m)\qquad \text{for } m=2,3
 \end{equation}
and
\begin{equation}\label{eq:2np3}
 \rho_\mathbb B(M_{r_1}(r_2)^{-1} M_{s_1}(s_2), M_{r_1}(r_3)^{-1} M_{s_1}(s_3))<\rho_\mathbb B(r_2,r_3).
\end{equation}

If equality holds in \eqref{eq:2np3}, we have a unique solution given by a regular Blaschke product of degree two.

\end{corollary}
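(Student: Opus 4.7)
My plan is to derive the corollary as a direct translation of Theorem~\ref{thm:3point-nev-pick} from the language of the quantities $Q_\kappa^\ell$ into the language of the pseudo-hyperbolic distance $\rho_\mathbb{B}$. The key observation is that, because $r_1,\ldots,r_3\in(-1,1)$ are real, the auxiliary factors $M_{r_1}(r_m)$ and $M_{r_2}(r_3)$ appearing in the definitions of $Q_1^m$ and $Q_2^3$ are real, hence their moduli are just the corresponding pseudo-hyperbolic distances.

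First I would compute $|Q_1^m|$ for $m=2,3$. Since $M_{r_1}(r_m)\in\mathbb{R}$, it is
\[
|Q_1^m|=\bigl|M_{r_1}(r_m)^{-1}M_{s_1}(s_m)\bigr|=\frac{|M_{s_1}(s_m)|}{|M_{r_1}(r_m)|}=\frac{\rho_\mathbb{B}(s_1,s_m)}{\rho_\mathbb{B}(r_1,r_m)}.
\]
Thus $|Q_1^m|<1$ is equivalent to inequality \eqref{eq:1np3} for that index $m$, while $|Q_1^m|=1$ corresponds to equality. Analogously, provided $Q_1^2,Q_1^3\in\mathbb{B}$, the same argument using $M_{r_2}(r_3)\in\mathbb{R}$ gives
\[
|Q_2^3|=\frac{\rho_\mathbb{B}(M_{r_1}(r_2)^{-1}M_{s_1}(s_2),\,M_{r_1}(r_3)^{-1}M_{s_1}(s_3))}{\rho_\mathbb{B}(r_2,r_3)},
\]
so $|Q_2^3|<1$ is equivalent to \eqref{eq:2np3} and $|Q_2^3|=1$ corresponds to equality there.

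Having made these identifications, the proof reduces to invoking Theorem~\ref{thm:3point-nev-pick}. The infinite-family case $|Q_2^3|<1$ forces in particular $Q_1^2,Q_1^3\in\mathbb{B}$ (otherwise $Q_2^3$ is defined as $\infty$ or a unimodular constant), so the non-singular regime of the theorem is equivalent to \eqref{eq:1np3} and \eqref{eq:2np3} holding simultaneously, yielding the ``infinite solutions'' conclusion. For the equality case of \eqref{eq:2np3}, the strict validity of \eqref{eq:1np3} guarantees $Q_1^2,Q_1^3\in\mathbb{B}$ with $Q_1^2\neq Q_1^3$ (since $\rho_\mathbb{B}(Q_1^2,Q_1^3)=\rho_\mathbb{B}(r_2,r_3)>0$), placing us in case (a) of Theorem~\ref{thm:3point-nev-pick}, which produces a unique solution that is a regular Blaschke product of degree two via formula \eqref{eq:solution3_2}.

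The only mild subtlety I expect is confirming that the definition of $Q_2^3$ as ``$\infty$'' in the theorem really does preclude ever having infinitely many solutions when one of the $|Q_1^m|$ fails to be strictly less than one; but that follows immediately from the trichotomy in the definition of $Q_2^3$ together with the third bullet of Theorem~\ref{thm:3point-nev-pick}. No further computation is needed, and the corollary is essentially a dictionary between the two formulations.
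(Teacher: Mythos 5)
Your proposal is correct and follows essentially the same route as the paper: both reduce the corollary to Theorem~\ref{thm:3point-nev-pick} by observing that, since the nodes are real, $M_{r_1}(r_m)$ and $M_{r_2}(r_3)$ are real scalars, so $|Q_1^m|$ and $|Q_2^3|$ are exactly the ratios of the pseudo-hyperbolic distances appearing in \eqref{eq:1np3} and \eqref{eq:2np3}. The paper's proof is just a terser statement of this same dictionary; your version supplies the explicit modulus computations and the check that $|Q_2^3|<1$ already forces $Q_1^2,Q_1^3\in\mathbb B$, which the paper leaves implicit.
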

Inequalities \eqref{eq:1np3} imply that the two points in the left hand side of \eqref{eq:2np3} both lie in the open unit ball, so it makes sense to consider their pseudo-hyperbolic distance in $\mathbb B$.

\begin{proof} 
 By Theorem \ref{thm:3point-nev-pick}, the interpolation problem has infinite solutions if and only if $|Q_2^3|<1$, and that's true if and only if $Q_1^2,Q_1^3\in\mathbb B$ and $|M_{Q_1^2}(Q_1^3)|<|M_{r_2}(r_3)|$, which correspond to the strict inequalities \eqref{eq:1np3} and \eqref{eq:2np3}.
 
 If \eqref{eq:1np3} holds and we have equality in place of strict inequality in \eqref{eq:2np3}, then this corresponds to $Q_1^2,Q_1^3\in\mathbb B$ with $|Q_2^3|=1$ and we still conclude by Theorem \ref{thm:3point-nev-pick}.
\end{proof}

\subsection{Nevanlinna--Pick interpolation with more than three points}\mbox{}

The same procedure by iteration seen above could be used to deal with the case $n>3$, as illustrated in \cite{BaribeauRivardWegert} for the complex case.

\begin{theorem}\label{thm:np_npoints}
  Let us assume $n\geq 3$, let $r_1,r_2,\ldots,r_n\in(-1,1)$ be distinct and let $s_1,s_2,\ldots s_n\in\mathbb B$. Consider the interpolation problem
 \begin{equation}\label{eq:interpolationproblem}
  f(r_m)=s_m\quad\text{for all }m=1,\ldots, n,\qquad f\in\mathcal{SR(\mathbb B,\mathbb B)}.
 \end{equation}
 
 Let 
 \begin{align*}
 	Q_0^m \assign s_m \qquad\text{for all }m=1,\ldots, n\\
  Q_1^m \assign M_{r_1}(r_m)^{-1}M_{s_1}(s_m) \qquad\text{for all }m=2,\ldots, n
 \end{align*}
and for any $\kappa=2,\ldots,n-1$, $\ell=\kappa+1,\ldots,n$
\begin{equation*}
 Q_\kappa^\ell \assign 
    \begin{cases}
    M_{r_\kappa}(r_\ell)^{-1}M_{Q_{\kappa-1}^\kappa}(Q_{\kappa-1}^\ell) &\text{ if }Q_{\kappa-1}^\kappa,Q_{\kappa-1}^\ell\in\mathbb B,\\
    Q_{\kappa-1}^\kappa &\text{ if }Q_{\kappa-1}^\kappa=Q_{\kappa-1}^\ell\in\partial \mathbb B,\\
    \infty &\text{ otherwise}.
    \end{cases}
\end{equation*}
The interpolation problem is non-singular, with infinite solutions, when $|Q_{n-1}^n|<1$. It is singular, with a unique (regular Blaschke) solution, if $|Q_{n-1}^n|=1$, whose degree is $\kappa_0\in\{1,\ldots,n-1\}$ if and only if $Q_{\kappa_0}^{\kappa_0+1}=\ldots=Q_{\kappa_0}^n\in\partial\mathbb B$, but $Q_{\kappa_0-1}^\ell\in\mathbb B$ for some (hence any) $\ell\in\{\kappa_0,\kappa_0+1,\ldots,n\}$.  Otherwise, the problem has no admissible solutions.
\end{theorem}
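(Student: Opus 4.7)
The plan is to proceed by induction on $n$, with the base case $n=3$ handled by Theorem~\ref{thm:3point-nev-pick}. The inductive step extends the same backward-construction algorithm, using the fact that for real nodes the iterated hyperbolic difference quotient of any solution evaluates at other real nodes via classical M\"obius arithmetic, giving the identification $Q_\kappa^\ell=f^{\{\kappa\}}_{r_1,\ldots,r_\kappa}(r_\ell)$.

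For necessity, I would prove by induction on $\kappa$ that if $f\in\mathcal{SR}(\mathbb B,\mathbb B)$ satisfies \eqref{eq:interpolationproblem}, then $Q_\kappa^\ell=f^{\{\kappa\}}_{r_1,\ldots,r_\kappa}(r_\ell)$ for every $\ell>\kappa$. The base case $\kappa=1$ follows because $r_1,r_\ell\in\mathbb R$ makes the rotations $\mathcal T_{r_1}$ and $\widetilde{\mathcal T}_{f,r_1}$ trivial, so that $f^\star_{r_1}(r_\ell)$ collapses to $M_{r_1}(r_\ell)^{-1}M_{s_1}(s_\ell)=Q_1^\ell$. The inductive step applies the same observation to the slice regular self-map $f^{\{\kappa-1\}}_{r_1,\ldots,r_{\kappa-1}}$ (assumed well-defined, meaning all previous $Q_{\kappa'}^{\kappa'+1}$ lie in $\mathbb B$; otherwise the iteration has already truncated and no $f$ can exist). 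Combining this with the Multipoint Schwarz--Pick Lemma yields $|Q_{n-1}^n|\leq 1$, so existence forces this inequality; conversely, $|Q_{n-1}^n|>1$ or any $Q_\kappa^\ell=\infty$ rules out a solution.

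For sufficiency in the non-singular case $|Q_{n-1}^n|<1$, I would run the three-point construction with more steps. Given $h\in\mathcal{SR}(\mathbb B,\mathbb B)\cup\partial\mathbb B$, set $g_n\assign h$ and define
\[ g_\kappa\assign \mathcal M_{-Q_\kappa^{\kappa+1}}\bullet(\mathcal M_{r_{\kappa+1}}\ast g_{\kappa+1})\qquad \text{for }\kappa=n-1,n-2,\ldots,0, \]
so that by construction $(g_\kappa)^\star_{r_{\kappa+1}}=g_{\kappa+1}$ and $g_\kappa(r_{\kappa+1})=Q_\kappa^{\kappa+1}$. I would then prove by downward induction on $\kappa$ that $g_\kappa(r_\ell)=Q_\kappa^\ell$ for all $\ell>\kappa$: at the real point $r_\ell$, the $\bullet$-action and $\ast$-product collapse to classical operations, so the identity to verify reduces to
\[ M_{-Q_\kappa^{\kappa+1}}\bigl(M_{r_{\kappa+1}}(r_\ell)\cdot Q_{\kappa+1}^\ell\bigr)=Q_\kappa^\ell, \]
which is precisely the defining relation of $Q_{\kappa+1}^\ell$. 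Setting $\kappa=0$ produces $f\assign g_0$ with $f(r_m)=s_m$ for every $m$, and varying $h$ yields the claimed infinite family.

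For the singular case $|Q_{n-1}^n|=1$, let $\kappa_0$ be the minimal index for which $Q_{\kappa_0}^{\kappa_0+1}=\cdots=Q_{\kappa_0}^n$ is a common unimodular quaternion. By the necessity direction, any solution $f$ must have $f^{\{\kappa_0\}}_{r_1,\ldots,r_{\kappa_0}}$ attain a boundary value at the interior point $r_{\kappa_0+1}$; since this is a slice regular map $\mathbb B\to\overline{\mathbb B}$, the Maximum Modulus Principle forces it to be identically that unimodular constant. The remark following the definition of iterated hyperbolic difference quotients then identifies $f$ as a regular Blaschke product of degree exactly $\kappa_0$—the minimality of $\kappa_0$ rules out lower degrees, since $f^{\{\kappa_0-1\}}_{r_1,\ldots,r_{\kappa_0-1}}$ still maps into the open ball. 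Uniqueness and the explicit formula follow by running only $\kappa_0$ backward steps of the algorithm from this terminal constant. The main technical subtlety throughout is ensuring that, although $\ast$-products and $\bullet$-actions involve the non-regular conjugation maps $\mathcal T$ and $\widetilde{\mathcal T}$, these reduce to classical M\"obius arithmetic at each real node; without this reduction the recursive identity $Q_\kappa^\ell=f^{\{\kappa\}}_{r_1,\ldots,r_\kappa}(r_\ell)$ would fail and the algorithm would break, which is precisely why the theorem is restricted to real nodes.
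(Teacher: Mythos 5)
Your proposal is correct and follows essentially the same route as the paper: the identification $Q_\kappa^\ell=f^{\{\kappa\}}_{r_1,\ldots,r_\kappa}(r_\ell)$ for real nodes (via the collapse of $\ast$-products and $\bullet$-actions to classical M\"obius arithmetic on $\mathbb R$), the backward recursion $g_\kappa=\mathcal M_{-Q_\kappa^{\kappa+1}}\bullet(\mathcal M_{r_{\kappa+1}}\ast g_{\kappa+1})$ with the verification $M_{-Q_\kappa^{\kappa+1}}(M_{r_{\kappa+1}}(r_\ell)\,Q_{\kappa+1}^\ell)=Q_\kappa^\ell$, and the truncated run of the algorithm from the unimodular constant in the singular case. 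The paper presents this as "the three-point proof with more iterations" rather than as a formal double induction, but the content is identical.
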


\begin{proof} 
The proof is essentially the same as in the case of three points, just with more iterations of the algorithm, which we can illustrate by the following scheme.
{%
\newcommand{\mc}[3]{\multicolumn{#1}{#2}{#3}}
\begin{center}
\begin{tabular}{ccccccl}
$r_1$ & $r_2$ & $r_3$ & $\cdots$ & $r_{n-1}$ & $r_n$ & \\\cline{1-6}
$\boldsymbol{s_1}$ & $s_2$ & $s_3$ & $\cdots$ & $s_{n-1}$ & \mc{1}{c|}{$s_n$} & $f=g_n$\\
 & $\boldsymbol{Q_1^2}$ & $Q_1^3$ & $\cdots$ & $Q_1^{n-1}$ & \mc{1}{c|}{$Q_1^n$} & $f^{\star}_{r_1}=g_{n-1}$\\
 &  & $\boldsymbol{Q_2^3}$ & $\cdots$ & $Q_2^{n-1}$ & \mc{1}{c|}{$Q_2^n$} & $f^{\{2\}}_{r_1,r_2}=g_{n-2}$\\
 &  &  & $\boldsymbol{\ddots}$ & $\vdots$ & \mc{1}{c|}{$\vdots$} & $\vdots$\\
 &  &  &  &  $\boldsymbol{Q_{n-2}^{n-1}}$ & \mc{1}{c|}{$Q_{n-2}^n$} & $f^{\{n-2\}}_{r_1,\ldots,r_{n-2}}=g_2$\\
 &  &  &  &  &  \mc{1}{c|}{$\boldsymbol{Q_{n-1}^n}$} & $f^{\{n-1\}}_{r_1,\ldots,r_{n-1}}=g_1$\\
 &  &  &  &  & \mc{1}{c|}{} & $f^{\{n\}}_{r_1,\ldots,r_n}=h$
\end{tabular}
\end{center}
}%

Analogously to the three-point case, the existence of an admissible interpolating function $f$ implies that 
\begin{equation}\label{eq:interp_n}
f^{\{\kappa\}}_{r_1,\ldots,r_\kappa}(r_\ell)=Q_\kappa^\ell\qquad \kappa\in\{1,\ldots,n-1\},\ell\in\{\kappa+1,\ldots,n\}.
\end{equation}
This forces in particular $|Q_{n-1}^n|\leq 1$ in order to have a solution.

Assuming $|Q_{n-1}^n|<1$, by the definition of the coefficients in the scheme, all of them must belong to the unit ball $\mathbb B$, and then \eqref{eq:interp_n} implies that for a slice regular interpolating function $f:\mathbb B\lto \mathbb B$ -- if existing -- we have $f^{\{n-1\}}_{r_1,\ldots, r_{n-1}}:\mathbb B\lto \mathbb B$ slice regular and $f^{\{n\}}_{r_1,\ldots,r_n}\in\mathcal{SR}(\mathbb B,\mathbb B)$. We proceed following the scheme from the bottom to the top: given $h\in\mathcal{SR}(\mathbb B,{\mathbb B})$ or $h\equiv u\in\partial\mathbb B$, we find $g_1$ such that $(g_1)^{\star}_{r_n}=h$ and $g_1(r_n)=Q^{n}_{n-1}$, then $g_2$ for which $(g_2)^{\star}_{r_{n-1}}=g_1$ and $g_2(r_{n-1})=Q_{n-2}^{n-1}$: the solution will also satisfy $g_2(r_n)=Q_{n-2}^n$. Following the rows, in the end one gets a slice regular function $f=g_n$ satisfying \eqref{eq:interp_n} and thus $f(r_m)=s_m$ for each $m=1,\ldots,n$, depending on the choice of the parameter $h$. 

Namely, we get \[{f=g_n=\mathcal M_{-s_1}\bullet(\mathcal M_{r_1}\ast (\overbrace{\mathcal M_{-Q_1^2}\bullet(\mathcal M_{r_2}\ast(\ldots\ast(\underbrace{\mathcal M_{-Q_{n-1}^n}\bullet(\mathcal M_{r_n}\ast h)}_{g_1})))}^{g_{n-1}}))}\]

If now we assume $|Q_{n-1}^n|=1$, i.e, $Q_{n-1}^n=u\in\partial\mathbb B$ this means -- as a consequence of the definition of the terms $Q_\kappa^\ell$ -- that all $Q_\kappa^\ell=u$ for all indices $\kappa$ greater or equal some $\kappa_0\in\{1,\ldots,n-1\}$, i.e., all the terms from the $(\kappa_0+1)$-th row of the scheme to the bottom. Suppose that the $\kappa_0$-th row contains a term within the unit ball: since the terms $Q_{\kappa_0}^\ell$ are not $\infty$, all the $\kappa_0$-th row consists of points of $\mathbb B$. By \eqref{eq:interp_n}, for an interpolating function $f$ we have that $f^{\star}_{r_1}, f^{\{2\}}_{r_1,r_2},\ldots f^{\{\kappa_0-1\}}_{r_1,\ldots,r_{\kappa_0-1}}\in\mathcal{SR}(\mathbb B,\mathbb B)$ while $f^{\kappa_0}_{r_1,\ldots,r_{\kappa_0}}\equiv u\in\partial\mathbb B$. So we start from that row and go upwards in the scheme: we find $g_{n-\kappa_0+1}$ such that $(g_{n-\kappa_0+1})^{\star}_{r_{\kappa_0}}=g_{n-\kappa_0}\equiv u $ and $g_{n-\kappa_0+1}(r_{\kappa_0})=Q_{\kappa_0-1}^{\kappa_0}$, i.e., $g_{n-\kappa_0+1}=\mathcal M_{-Q_{\kappa_0-1}^{\kappa_0}}\bullet(\mathcal M_{r_{\kappa_0}}\ast u)$ and so on, until we get 
\begin{equation*}
 {f= g_n=\mathcal M_{-s_1}\bullet(\mathcal M_{r_1}\ast (\overbrace{\mathcal M_{-Q_1^2}\bullet(\mathcal M_{r_2}\ast(\ldots\ast(\underbrace{\mathcal M_{-Q_{\kappa_0-1}^{\kappa_0}}\bullet(\mathcal M_{r_{\kappa_0}}\ast u)}_{g_{n-\kappa_0+1}})))}^{g_{n-1}})).}
\end{equation*}

which is a regular Blaschke product of degree $\kappa_0$ by Proposition \ref{prop:blaschke_deg}; moreover it satisfies \eqref{eq:interp_n} and the interpolation condition $f(r_m)=s_m$, $m=1,\ldots,n$.

\end{proof}

The condition for having an infinite family of solutions (non-singular case) for the Pick interpolation problem is $|Q_{n-1}^n|<1$, or equivalently $|Q_\kappa^\ell|<1$ for all $\kappa=1,\ldots,n-1$ and $\ell=\kappa+1,\ldots,n$. This can be easily restated in terms of the pseudo-hyperbolic distance $\rho_\mathbb B$, analogously as in Corollary~\ref{cor:np_dist}.

\begin{corollary}
	The interpolation problem \eqref{eq:interpolationproblem} has infinite solutions if and only if \[\rho_\mathbb B(Q_{n-2}^{n-1},Q_{n-2}^n)<\rho_\mathbb{B}(r_{n-1},r_n),\] or equivalently, if $\rho_\mathbb B(Q_{\kappa-1}^{\kappa},Q_{\kappa-1}^\ell)<\rho_\mathbb{B}(r_{\kappa},r_\ell)$ for all $\kappa\in\{1,\ldots,n-1\},\ell\in\{\kappa+1,\ldots,n\}$ -- where we use the notation $Q_0^\ell\assign s_\ell$.
\end{corollary}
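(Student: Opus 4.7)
The plan is to combine the criterion from Theorem~\ref{thm:np_npoints} with a direct translation of the condition $|Q_\kappa^\ell|<1$ into pseudo-hyperbolic distances, exploiting the fact that the nodes $r_1,\ldots,r_n$ are real.

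First, I recall that Theorem~\ref{thm:np_npoints} characterises the non-singular case as $|Q_{n-1}^n|<1$. Hence the task reduces to showing that $|Q_{n-1}^n|<1$ is equivalent to the family of strict pseudo-hyperbolic inequalities
\begin{equation*}
\rho_\mathbb{B}(Q_{\kappa-1}^\kappa,Q_{\kappa-1}^\ell)<\rho_\mathbb{B}(r_\kappa,r_\ell),\qquad 1\leq \kappa\leq n-1,\ \kappa<\ell\leq n,
\end{equation*}
and in particular to the single inequality with $\kappa=n-1$, $\ell=n$. The key observation is that since $r_\kappa,r_\ell\in(-1,1)$, the M\"obius factor $M_{r_\kappa}(r_\ell)$ is a real number with $|M_{r_\kappa}(r_\ell)|=\rho_\mathbb{B}(r_\kappa,r_\ell)\in[0,1)$, hence from the recursive definition
\begin{equation*}
Q_\kappa^\ell = M_{r_\kappa}(r_\ell)^{-1}M_{Q_{\kappa-1}^\kappa}(Q_{\kappa-1}^\ell)
\end{equation*}
(in the case $Q_{\kappa-1}^\kappa,Q_{\kappa-1}^\ell\in\mathbb{B}$) one reads off
\begin{equation*}
|Q_\kappa^\ell| = \frac{\rho_\mathbb{B}(Q_{\kappa-1}^\kappa,Q_{\kappa-1}^\ell)}{\rho_\mathbb{B}(r_\kappa,r_\ell)}.
\end{equation*}
Therefore $|Q_\kappa^\ell|<1$ is precisely the inequality $\rho_\mathbb{B}(Q_{\kappa-1}^\kappa,Q_{\kappa-1}^\ell)<\rho_\mathbb{B}(r_\kappa,r_\ell)$.

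Next I check the equivalence of the single condition $|Q_{n-1}^n|<1$ with the full family of conditions. One direction is immediate: if $|Q_\kappa^\ell|<1$ for all admissible $\kappa,\ell$, specialising to $(\kappa,\ell)=(n-1,n)$ gives the single condition. For the converse I argue by downward induction on $\kappa$. If $|Q_{n-1}^n|<1$, then by the recursive definition $Q_{n-1}^n$ is a genuine quotient of M\"obius values (neither of the exceptional branches of the definition applies, since those give values of modulus $1$ or $\infty$); this forces $Q_{n-2}^{n-1},Q_{n-2}^n\in\mathbb{B}$, i.e.\ $|Q_{n-2}^{n-1}|<1$ and $|Q_{n-2}^n|<1$. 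Iterating, every $Q_\kappa^\ell$ for $\kappa\leq n-1$ must lie in $\mathbb{B}$, which by the identity displayed above gives back all the required strict pseudo-hyperbolic inequalities.

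The only mildly delicate point is the downward inductive step, because the recursive definition branches into three cases and one must rule out the boundary/exterior branches under the assumption $|Q_{n-1}^n|<1$; but this is exactly what the $\infty$ and boundary clauses in the definition of $Q_\kappa^\ell$ were designed to enforce, so the verification is bookkeeping. Combining Theorem~\ref{thm:np_npoints} with these equivalences yields the corollary.
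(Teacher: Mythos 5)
Your proposal is correct and follows essentially the same route as the paper, which proves this corollary by the same argument as Corollary~\ref{cor:np_dist}: invoke Theorem~\ref{thm:np_npoints}, observe that $|Q_\kappa^\ell|=\rho_\mathbb{B}(Q_{\kappa-1}^\kappa,Q_{\kappa-1}^\ell)/\rho_\mathbb{B}(r_\kappa,r_\ell)$ because $M_{r_\kappa}(r_\ell)$ is real of modulus $\rho_\mathbb{B}(r_\kappa,r_\ell)$, and note that $|Q_{n-1}^n|<1$ forces all lower-level terms into $\mathbb{B}$ since the other branches of the recursion produce modulus $1$ or $\infty$. Your explicit downward induction just spells out what the paper leaves as "easily restated".
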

%

\subsection{The case of nodes not in the real line}\mbox{}

Since now we have considered a particular case for the quaternionic Nevanlinna--Pick interpolation, when all the nodes are \emph{real}. One may ask if Theorem \ref{thm:np_npoints} still works by replacing $r_1,\ldots,r_n\in(-1,1)$ with distinct points $p_1,\ldots p_n$ in the whole unit ball.

	In generality, the argument shown above breaks down. Indeed, we would like to define from the data of the interpolation problem $\{p_m\},\{s_m\}$ quantities $Q_\kappa^\ell$ such that $f^{\star}_{p_1}(p_\ell)=Q_1^\ell$ ($\ell=2,\ldots,n$) for any solution $f$ of the problem. But, assuming that $f$ exists,
	\[
	f^\star_{p_1}(p_\ell)=(\mathcal M_{p_1}^{-\ast}\ast\mathcal M_{s_1}\bullet f)(p_\ell)
	\]
	and the last term evaluates as $ M_{p_1}(\tilde p_\ell)^{-1}M_{s_1}(f(\hat p_\ell))$ where $\tilde p_\ell$ is a point in $\mathbb S_{p_\ell}$ depending on $p_1$ and $\hat p_\ell\in\mathbb S_{p_\ell}$ may depend on $p_1,s_1$ and on the conjugate function $f^c$. They trivially coincide with $p_\ell$ when that point is real. So, the proposed algorithm does not work anymore, as $\hat p_\ell$ generally depends on the function $f^c$ which is not known a priori and in any case $f(\hat p_\ell)$ is not known if $\hat p_\ell$ is not one of the nodes.
	
However, there is a specific situation in which we can recover a partial result in the spirit of Theorem \ref{thm:np_npoints}. As Theorem \ref{thm:3point-nev-pick} holds in the complex setting (\cite[Thm. 4.1]{BaribeauRivardWegert}) without the restriction to real nodes of interpolation, then we have the following partial answer: if $p_1,\ldots, p_n\in\mathbb B$ belong to a common slice $\mathbb {C}_I$ and also $s_1,\ldots,s_n\in\mathbb B\cap\mathbb {C}_I$, then the interpolation problem $f(p_m)=s_m$, $m\in\lbrace 1,\ldots, n\rbrace$ admits infinite solutions that are $\mathbb C_I$-preserving if $|Q_{n-1}^n|< 1$, while it admits at least a solution given by a regular Blaschke product of degree at most $n-1$, when $|Q_{n-1}^n|=1$, and this is the unique one preserving $\mathbb C_I$.
 
 Indeed, we can identify $\mathbb B\cap \mathbb C_I$ with the complex unit disk $\mathbb D$. Notice that $Q_{n-1}^n\in\mathbb C_I$ by our assumptions. If $|Q_{n-1}^n|\leq 1$, there exists a holomorphic function $f_0:\mathbb B\cap\mathbb{C}_I\lto\mathbb B\cap\mathbb {C}_I$ satisfying the interpolation properties.
 Now there exists a unique slice regular function $f:\mathbb B \lto \mathbb H$ extending $f_0$ by the formula (see \cite[Lemma 1.21]{librospringer2}):
 \begin{align*}
 	&f(x+yJ)=\dfrac{f_0(x-yI)+f_0(x+yI)}{2}+\dfrac{JI[f_0(x-yI)-f_0(x+yJ)]}{2}\\
 	&\forall\,x,y\in\mathbb R, x^2+y^2<1, J\in\mathbb S.
 \end{align*}
Fix $J$ as above and let $\lambda=\tmop{Re} (JI)$. Then $|\lambda|\leq1$ and there exists $K$ orthogonal to $I$ such that $JI=\lambda + \sqrt{1-\lambda^2} K$, so
\begin{align*}
	f(x+yJ)&=\dfrac{(1+\lambda)f_0(x-yI)+(1-\lambda)f_0(x+yI)}{2}+K \dfrac{\sqrt{1-\lambda^2}[f_0(x-yI)-f_0(x+yI)]}{2}\\
	&= G_0+ K G_1,\qquad G_0,G_1\in\mathbb B\cap\mathbb C_I.
\end{align*}
Hence we have an orthogonal splitting and by a simple calculation
\begin{equation*}
 |f(x+yJ)|^2=|G_0|^2+|G_1|^2=\left(\frac{1+\lambda}{2}\right)|f_0(x-yI)|^2+\left(\frac{1-\lambda}{2}\right)|f_0(x+yI)|^2.
\end{equation*}
The last term is clearly bounded above by $\max\{|f_0(x-yI)|^2,|f_0(x+yI)|^2\}$, which by our assumption is strictly less than $1$. This holds for all $J\in\mathbb S$ and therefore $f$ is a self map of the unit ball which is slice regular and solves the interpolation problem.

When $|Q_{n-1}^n|<1$, by \cite[Thm. 4.1]{BaribeauRivardWegert} $f_0$ varies in an infinite family of holomorphic functions, giving rise to infinite slice regular functions from $\mathbb B$ to $\mathbb B$ satisfying the requested interpolation. If $|Q_{n-1}^n|=1$, then $f_0$ is unique and given by a complex Blaschke product of degree $\kappa\in\lbrace 1,\ldots, n-1\rbrace$:
\[
f_0(\zeta)=\left(\prod_{\ell=1}^{\kappa}\frac{\zeta-q_\ell}{1-\zeta \overline{q_\ell}}\right)\cdot v,\quad \zeta,q_1,\ldots,q_\kappa\in\mathbb B\cap\mathbb C_I, v\in\partial\mathbb B.
\]
Since $q_1,\ldots,q_\kappa$ belong to the same slice $\mathbb C_I$, then the regular Blaschke product $B=\mathcal M_{q_1}\ast\ldots\ast \mathcal M_{q_\kappa}v$ is such that $B\mid_{\mathbb C_I}=f_0$: hence the unique extension of $f_0$ to a slice regular self-map of the unit ball $\mathbb B$ is exactly $B$ and we get that a solution of the interpolation problem is given by a regular Blaschke product of degree less or equal than $n-1$, which preserves the slice $\mathbb C_I$. Any solution preserving that slice must coincide with $B$ on the slice and hence everywhere.


\end{document}